\documentclass[11pt]{amsart}
\usepackage[utf8]{inputenc}
\usepackage[margin=0.75in]{geometry}
\usepackage{hyperref}
\usepackage{amsfonts}
\usepackage{amsmath}
\usepackage{amsthm}
\usepackage{amssymb}
\usepackage{mathrsfs}
\usepackage{tikz}
\usepackage[nameinlink,noabbrev]{cleveref}
\usepackage{amsaddr}
\usepackage{cleveref}

\DeclareFontFamily{OT1}{pzc}{}
\DeclareFontShape{OT1}{pzc}{m}{it}{<-> s * [1.200] pzcmi7t}{}
\DeclareMathAlphabet{\mathpzc}{OT1}{pzc}{m}{it}

\numberwithin{equation}{section}

\newcommand{\R}{\mathbb{R}}
\newcommand{\N}{\mathbb{N}}

\newcommand{\C}{\mathbb{C}}
\newcommand{\X}{\mathfrak{X}}
\newcommand{\D}{\mathcal{D}}

\renewcommand{\S}{\mathcal{S}}

\newcommand{\dom}{\operatorname{dom}}

\theoremstyle{plain}
\newtheorem{theorem}{Theorem}[section]
\newtheorem{lemma}[theorem]{Lemma}
\newtheorem{corollary}[theorem]{Corollary}

\theoremstyle{definition}
\newtheorem{definition}[theorem]{Definition}

\theoremstyle{remark}
\newtheorem{remark}{Remark}

\title[]{A Modern Functional Analytic Tour of the \\ Fourier--Bessel Transform}
\author{Cameron L. Williams}
\address{
Department of Mathematics \\
Embry-Riddle Aeronautical University \\
3700 Willow Creek Road \\
Prescott, AZ 86301
}

\begin{document}

\begin{abstract}
In this paper, we review the theory of the Fourier--Bessel transform. The Fourier--Bessel transform is an integral transform on the half-line that generalizes the Fourier cosine transform and is dependent upon a parameter $\nu > -1$. Many of the standard results for the Fourier--Bessel transform have become folklore results and are difficult to find single sources for or predate modern functional analysis and instead rely on hard analysis techniques that miss the elegance of more modern machinery. This review aims to centralize much of the basic theory while providing some new proofs and approaches that can be applied to other integral transforms beyond the Fourier and Fourier--Bessel transform. Particular care is taken to illustrate the distinctions in the theory of the Fourier--Bessel transform when $-1 < \nu < -\frac{1}{2}$ and $-\frac{1}{2} < \nu$.
\end{abstract}

\maketitle

\section{Introduction} \label{sec:1}

The Fourier--Bessel transform finds its origin in Hermann Hankel's 1875 paper \cite{Hankel1875} in which he established the invertibility of an integral transform that later would later bare his name, though is often referred to the Fourier--Bessel transform in the present-day mathematical literature. Hankel's motivation for studying these integrals was in the rigorous development of Fourier-like series based on Bessel functions due to such expansions' appearance when employing the method of separation of variables in solving partial differential equations in polar and spherical coordinate systems. Particularly, he wanted to make sense of series of the form
\begin{equation}
\sum_n a_n J_{\nu}(nx),
\end{equation}

\noindent where $J_{\nu}$ is the Bessel function of the first kind and solves the differential equation
\begin{equation*}
x^2 y'' + x y' + (x^2 - \nu^2) y = 0.
\end{equation*}

This is a second order linear differential equation, so we expect two linearly independent solutions to this differential equation. The differential equation can be simplified by first letting $y(x) = x^{\nu} z(x)$ to get
\begin{equation*}
x z'' + (2\nu+1)z' + x z = 0.
\end{equation*}

\noindent Expanding $z$ in the power series $z = \sum_n c_n x^n$ leads to $0 = c_1$ and a recurrence relation
\begin{equation*}
c_{n+2} = - \frac{1}{(n+2)(n+2\nu+2)} c_n
\end{equation*}

\noindent so that $0 = c_3 = c_5 = \ldots$ and $c_n$ is undefined if $\nu = -\frac{k}{2}$ for some integer $k > 1$. This suggests that $J_{\nu}$ is only simply defined for $-1 < \nu$ and must be analytically continued to account for $\nu \le -1$. After a judicious choice for $c_0$, $J_{\nu}$ is then given by
\begin{equation}
J_{\nu}(x) = \sum_{n=0}^{\infty} \frac{(-1)^n}{n! \Gamma(n+\nu+1)} \bigg(\frac{x}{2}\bigg)^{2n+\nu}
\end{equation}

\noindent and $\Gamma$ is the gamma function. For $\nu \neq 1, 2, 3, \ldots$, $J_{\nu}$ and $J_{-\nu}$ are clearly linearly independent solutions to the differential equation.

A careful analysis in the half-integer case shows that the singularity of the recurrence relation does not truly exist \cite[p. 41]{Watson}. Moreover, even though the gamma function is undefined on $\{\ldots, -3, -2, -1, 0\}$, its reciprocal is entire and so the series is well-defined for all $\nu$. However there is a complication: $J_{-n}(x) = (-1)^n J_n(x)$ for $n = 1, 2, 3, \ldots$ so that these are no longer linearly independent solutions. This can be remedied by employing the Bessel function of the second kind $Y_{\nu}$ \cite[\href{https://dlmf.nist.gov/10.2.E3}{(10.2.3)}]{NIST:DLMF}.

Hankel was especially interested in making sense of the equality
\begin{equation}
f(x) = \sum_n a_n J_{\nu}(nx),
\end{equation}

\noindent where the $a_n$ are given by the usual inner product formula, i.e. when a function is equal to its Fourier--Bessel series in some sense, similar to the case of Fourier's series. Similarly, he was interested in the analogues of Fourier's infinite integrals
\begin{align}
\int_0^{\infty}\cos(yz) \int_0^{\infty} \cos(ax) f(x)\,dx\,dy &= \frac{\pi}{2} f(z) \\
\int_0^{\infty}\sin(yz) \int_0^{\infty} \sin(ax) f(x)\,dx\,dy &= \frac{\pi}{2} f(z)
\end{align}

\noindent that would give rise to Fourier transform theory. These analogues would be the origin of the eponymous transform in the form of the integrals
\begin{equation}
\int_0^{\infty} y J_{\nu}(yz)\int_0^{\infty} x J_{\nu}(xy) f(x)\,dx\,dy = f(z).
\end{equation}

Proofs for many of the fundamental properties of the Fourier--Bessel transform have existed for over a century, including the identity above under appropriate assumptions, but many use hard real analysis techniques rather than softer functional analysis techniques as the theory predates modern functional analysis \cite[14.41]{Watson}. On the other hand, some of the more modern functional analysis-based treatments shortcut the finer details and leave many questions unanswered \cite[Ch. 9]{Akhiezer}. Much of the current mathematical work takes these facts for granted which has a particular challenge: the theory for the Fourier--Bessel transform is different when $-1 < \nu < -\frac{1}{2}$ and $-\frac{1}{2} < \nu$ as we shall see. Further, the literature does not always make it clear when the distinction is important. This disparity creates a disconnect and can be the cause of some confusion about the validity and applicability of theorems.

The discussion in Watson's treatise \cite[p. 576--577]{Watson} illustrates the difficulty with navigating the literature regarding Fourier--Bessel theory (even though it pertains to Fourier--Bessel series and not transform), especially as it pertains to the distinctions between the cases $-1 < \nu < -\frac{1}{2}$ and $-\frac{1}{2} < \nu$.

The Fourier--Bessel series for a sufficiently nice function $f$ on $[0,1]$ is a series of the form
$$ f(x) \sim \sum_{n=1}^{\infty} a_n J_{\nu}(j_{m,\nu}x), $$

\noindent where $j_{m,\nu}$ is the $m$th zero of the Bessel function $J_{\nu}$ and $a_n$ is given by
$$ a_n = \frac{2}{J_{\nu+1}^2(j_{m,\nu})} \int_0^1 t f(t) J_{\nu}(j_{m,\nu}t)\,dt. $$

\noindent Watson has the following quote discussing the convergence of such series as it pertains to the range of values that $\nu$ might take. Note that no reference is made to the work addressing $-1 < \nu < -\frac{1}{2}$.

\begin{quote}
Dini, however, remarked that he was unable to deal with the range $-1 < \nu < -\frac{1}{2}$, and limited himself to the range $\nu \ge -\frac{1}{2}$. Several subsequent writers, while proving theorems for the latter range, asserted that the extension to the former range was merely a matter of detail; but it was not until 1922 [after the first edition] that anyone took the trouble to supply the detail which is tedious and of no great interest.
\end{quote}

In contrast with the Fourier transform which has myriad excellent treatments across texts with varying perspectives, there is no central entry point for the functional analytic theory of the Fourier--Bessel transform. A further complication is that many of the treatments of the Fourier transform leave little room for generalization to other integral transforms as they often rely on the group-theoretic properties of the Fourier kernel to streamline the arguments.

In this survey article, we will focus on the functional analytic properties of the Fourier--Bessel transform. There are some original results about integral transforms and operators scattered throughout. A major focus of the survey will be to illuminate the distinctions between the $-1 < \nu < -\frac{1}{2}$ and $-\frac{1}{2} < \nu$ settings, the applicability of results, and ways to manage the challenges that appear when $-1 < \nu < -\frac{1}{2}$.

In \Cref{sec:2}, we will discuss the Hankel and Fourier--Bessel transforms and how they are related. In \Cref{sec:3}, we will establish the domain of the Fourier--Bessel transform for $-1 < \nu < -\frac{1}{2}$ and $-\frac{1}{2} < \nu$. In \Cref{sec:4}, we will explore properties of the Fourier--Bessel transform, including a dilation property and the Riemann--Lebesgue lemma. This is where the distinction between the $-1 < \nu < -\frac{1}{2}$ and $-\frac{1}{2} < \nu$ settings becomes most apparent which has major implications on the remainder of the paper. In \Cref{sec:5}, we present some new results regarding the $L^2$ properties of integral transforms, specifically $L^2$ norm-preservation. In \Cref{sec:6}, we establish eigenfunctions of the Fourier--Bessel transform. In \Cref{sec:7}, we develop the $L^2$ theory of the Fourier--Bessel transform by way of an $L^1\cap L^2$ analysis and a Schwartz space analysis, employing results from \Cref{sec:5,sec:6} to this end. While having both analyses is not necessary, there are merits to exploring both as there are marked differences from the Fourier case that can be instructive in the analysis of other integral transforms. In \Cref{sec:8}, we will discuss an uncertainty principle for the Fourier--Bessel transform and discuss its limitations compared to the usual Fourier uncertainty principle and remedies for these limitations.

\section{The Hankel and Fourier--Bessel Transforms} \label{sec:2}

In this section, we will briefly discuss the Hankel transform for historical purposes before shifting focus to the Fourier--Bessel transform. There is not a significant difference between the two as they are merely similarity transformations of each other. For the sake of keeping with the present-day mathematics perspective, we will then pivot to discussing only the Fourier--Bessel transform. We will define the Hankel and Fourier--Bessel transforms in the next two definitions. We will discuss restrictions on $\nu$ in the next section.

\begin{definition}
The \emph{Hankel transform of order $\nu$} for $\nu\in\R$, often denoted $\mathcal{H}_{\nu}$ in the literature, is defined as an integral transform on sufficiently nice functions $f$ on the positive half-line and $y\in \R^+$ by
\begin{equation}
\mathcal{H}_{\nu} f(y) = \int_{\R^+} k_{\nu}(xy) f(x)\,dx,
\end{equation}

\noindent where $k_{\nu}(x) = \sqrt{x} J_{\nu}(x)$.
\end{definition}

A close inspection of this definition shows similarity to Hankel's repeated integrals except rather than treating $x$ and $y$ separately in the integral, a $\sqrt{xy}$ component appears in both integrals to make the integral transform and its (formal) inverse more symmetric in $x$ and $y$.

\begin{definition}
The \emph{Fourier--Bessel transform of order $\nu$} for $\nu\in\R$, often denoted $\mathcal{F}_{\nu}$ in the literature, is defined as an integral transform on sufficiently nice functions $f$ on the positive half-line and $y\in \R^+$ by
\begin{equation}
\mathcal{F}_{\nu} f(y) = \int_{\R^+} \varphi_{\nu}(xy) f(x) x^{2\nu+1} \,dx,
\end{equation}

\noindent where $\varphi_{\nu}(x) = x^{-\nu} J_{\nu}(x)$.
\end{definition}

A quick computation shows that $k_{\nu}$ solves the differential equation $-k_{\nu}''(x) + \frac{\nu^2-\frac{1}{4}}{x^2} k_{\nu}(x) = k_{\nu}$ and $\varphi_{\nu}$ solves the differential equation $-\varphi_{\nu}''(x) - \frac{2\nu+1}{x}\varphi_{\nu}'(x) = \varphi_{\nu}(x)$. We will save the discussion for which functions classify as ``sufficiently nice'' for later, but it is easy to see that $C_c^{\infty}(\R^+)$ functions are admissible. Immediately we can see similarities and differences between the two definitions. Both integral transforms unsurprisingly involve the Bessel function $J_{\nu}$, both integral kernels $k_{\nu}$ and $\varphi_{\nu}$ treat $x$ and $y$ on equal footing (and moreover, they are both functions of $xy$ which will be important later), and both $k_{\nu}$ and $\varphi_{\nu}$ are eigenfunctions of a second order differential operator. There are two marked differences between the two definitions: the measure in the Hankel transform case is the usual Lebesgue measure $dx$, whereas the measure in the Fourier--Bessel transform case is $x^{2\nu+1}\,dx$; and $\sqrt{x}$ appears in $k_{\nu}(x)$ whereas $x^{-\nu}$ appears in $\varphi_{\nu}(x)$. $k_{\nu}$ is not an entire function except when $\nu$ is a half-integer greater than or equal to $-\frac{1}{2}$, whereas $\varphi_{\nu}$ is entire for all $\nu > -1$.

The Hankel transform and Fourier--Bessel transform definitions are in fact exactly equivalent when $\nu = -\frac{1}{2}$. Specifically, $J_{-1/2}(x) = \sqrt{\frac{2}{\pi x}} \cos(x)$ so that $k_{-1/2}(x) = \varphi_{-1/2}(x) = \sqrt{\frac{2}{\pi}} \cos(x)$ and the Fourier cosine transform emerges. As such, we will omit discussion of $\nu = -\frac{1}{2}$, and instead focus on $\nu \neq -\frac{1}{2}$ as the proofs require more care in these settings. The case of $\nu = -\frac{1}{2}$ can be recovered easily.

Beyond the $\nu = -\frac{1}{2}$ case, the Hankel and Fourier--Bessel transforms are similarity transformations of each other via the map $S_{\nu} f(x) = x^{\nu+1/2} f(x)$ for sufficiently nice functions $f$. Thus, instead of studying both the Hankel transform and the Fourier--Bessel transform, we have the freedom to choose one and port results over to the other by way of the similarity transformation $S_{\nu}$. Henceforth, we will focus our attention on the Fourier--Bessel transform $\mathcal{F}_{\nu}$.

An alternative approach to deriving the Hankel transform than by inspection of Hankel's repeated integral is that $\mathcal{H}_0$ can be established by way of the Fourier transform of radial functions (i.e. functions satisfying $f(\vec{r}) = f(|\vec{r}|)$) in two dimensions. Likewise, modifications of $\mathcal{H}_n$ can be established by proceeding similarly for three and higher dimensions, though this is limited to integer orders.

Perhaps a more palatable functional analytic approach is in the spectral theory of the Bessel differential operator $\Delta_{\nu}$ defined on $C^2(\R^+)$ by
\begin{equation}
\Delta_{\nu} = \frac{d^2}{dx^2} + \frac{2\nu+1}{x} \frac{d}{dx}
\end{equation}

\noindent on an $L^2$ space on the half-line. Note that $\Delta_{\nu} \varphi_{\nu} = -\varphi_{\nu}$. $\varphi_{\nu}$ then appears naturally in the spectral analysis of $\Delta_{\nu}$ as it is an eigenfunction of it. Rather than getting into the weeds on this point, we will simply take the Fourier--Bessel transform as a definition. While the spectral theory perspective is a very elegant way to think about the Fourier--Bessel transform---and many other integral transforms---such approaches are $L^2$ focused, so the integral transform and $L^1$ theory are not immediate.

\section{The Domain of the Fourier--Bessel Transform} \label{sec:3}

In the previous section, we postponed discussion upon which functions the Fourier--Bessel transform is defined. We will now rectify this. Prior to that, we must discuss some properties of the kernel $\varphi_{\nu}$.

\begin{lemma} \label{lem:asymptotics_of_jnu}
Let $x\in\R^+$, then we have the following bound for $\varphi_{\nu}$
\begin{equation}
|\varphi_{\nu}(x)| \le M_1 + M_2 x^{-\nu-1/2}
\end{equation}

\noindent for some $M_1, M_2 > 0$. Or, more precisely, there exists an $R > 0$ such that $|\varphi_{\nu}(x)| \le M_1$ for $x\in (0, R)$ and $|\varphi_{\nu}(x)| \le M_2 x^{-\nu-1/2}$ for $x\in (R, \infty)$.
\end{lemma}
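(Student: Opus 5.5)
The plan is to split $\R^+$ at a fixed $R>0$ (say $R=1$) and treat the two regimes separately, using only the power series for $J_\nu$ near the origin and the standard large-argument behaviour of $J_\nu$ at infinity. Throughout we take $\nu>-1$, the range in which the series for $J_\nu$ makes sense as discussed in \Cref{sec:1}.

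On $(0,R)$ I will use the series from the introduction:
\begin{equation*}
\varphi_\nu(x)=x^{-\nu}J_\nu(x)=2^{-\nu}\sum_{n=0}^\infty \frac{(-1)^n}{n!\,\Gamma(n+\nu+1)}\Big(\frac{x}{2}\Big)^{2n}.
\end{equation*}
Since $\nu>-1$, every $\Gamma(n+\nu+1)$ is finite and positive. The series is a power series in $x^2$ with infinite radius of convergence, so $\varphi_\nu$ extends to an entire function, in particular a continuous one on $[0,R]$. Hence $|\varphi_\nu|\le M_1$ on $(0,R)$. If an explicit constant is wanted, take absolute values termwise to get
\begin{equation*}
M_1=2^{-\nu}\sum_n \frac{(R/2)^{2n}}{n!\,\Gamma(n+\nu+1)}<\infty.
\end{equation*}

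On $(R,\infty)$ I will use the large-argument estimate $|J_\nu(x)|\le C x^{-1/2}$ for $x\ge R$. This follows from the standard asymptotic
\begin{equation*}
J_\nu(x)=\sqrt{\tfrac{2}{\pi x}}\Big(\cos\big(x-\tfrac{\nu\pi}{2}-\tfrac{\pi}{4}\big)+O(x^{-1})\Big)\qquad (x\to\infty),
\end{equation*}
cited from \cite{NIST:DLMF} or \cite{Watson}. The asymptotic gives the bound for $x\ge R_0$ with some $R_0$. On the compact interval $[R,R_0]$, $x^{1/2}J_\nu(x)$ is continuous and hence bounded. Combining the two gives $|\varphi_\nu(x)|=x^{-\nu}|J_\nu(x)|\le M_2x^{-\nu-1/2}$ on $(R,\infty)$. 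Adding the two regional bounds yields the global bound $M_1+M_2x^{-\nu-1/2}$.

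The main obstacle is the large-$x$ step, namely justifying the $x^{-1/2}$ decay rather than merely quoting it. If a self-contained argument is preferred, I would take it from the differential equation stated in \Cref{sec:2}. Set $u=k_\nu=\sqrt{x}J_\nu$, which satisfies $u''+(1-q(x))u=0$ with $q(x)=(\nu^2-\tfrac14)/x^2$. Consider the energy $E=u^2+(u')^2$, for which $E'=2q\,uu'$, so $|E'|\le |q|E$. Since $q\in L^1(R,\infty)$, Gronwall gives $E(x)\le E(R)\exp\big(\int_R^\infty|q|\big)$. Hence $u$ is bounded on $(R,\infty)$, which is exactly $|J_\nu(x)|\le Cx^{-1/2}$. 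This avoids the full asymptotic expansion and works uniformly for all $\nu>-1$, including $-1<\nu<-\tfrac12$, where $x^{-\nu-1/2}$ actually grows.
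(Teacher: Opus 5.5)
Your proof is correct, and its main line is the same as the paper's. Near the origin you use the entire power series for $\varphi_\nu$, and at infinity you get the $x^{-1/2}$ decay of $J_\nu$ from its large-argument asymptotics; the paper quotes the full asymptotic series from Watson 7.21.1 and reads off the dominant term, and your version with an explicit $O(x^{-1})$ remainder states precisely what that reading relies on. The two proofs place the compactness step differently. The paper takes $R$ large enough that the asymptotic regime has begun and bounds $\varphi_\nu$ on all of $(0,R)$ by continuity, while you fix $R$ and bound $x^{1/2}J_\nu$ on the gap $[R,R_0]$; these are interchangeable. Your energy argument for $k_\nu=\sqrt{x}\,J_\nu$ is a genuinely different route to the large-$x$ step. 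It uses only the equation $-k_\nu''+\frac{\nu^2-1/4}{x^2}k_\nu=k_\nu$ from Section 2 and needs no knowledge of the asymptotic expansion. It also gives an explicit constant, $|k_\nu(x)|\le \sqrt{E(R)}\,e^{|\nu^2-1/4|/(2R)}$ for $x\ge R$, and handles both $-1<\nu<-\frac12$ and $\nu>-\frac12$ at once. What the asymptotic route gives in return is sharpness: it shows that $x^{-\nu-1/2}$ is the true size of $\varphi_\nu$ up to the oscillating cosine. The paper relies on this elsewhere when it asserts that $\varphi_\nu$ grows at infinity for $-1<\nu<-\frac12$. Gronwall's inequality by itself yields only the upper bound, which is all this lemma requires.
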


\begin{proof}
This follows from the asymptotics of the Bessel function $J_{\nu}$ \cite[7.21.1]{Watson}:
\begin{align*}
J_{\nu}(x) \sim \sqrt{\frac{2}{\pi x}} \bigg(& \cos\bigg(x - \frac{\pi}{2} \nu - \frac{\pi}{4}\bigg) \sum_{n=0}^{\infty} (-1)^n \frac{\Gamma\big(2n + \frac{1}{2} - \nu\big)\Gamma\big(2n + \frac{1}{2} + \nu\big)}{2^{2n} (2n)! \Gamma\big(\frac{1}{2} - \nu\big) \Gamma\big(\frac{1}{2} + \nu\big)} \frac{1}{x^{2n}} \\
& - \sin\bigg(x - \frac{\pi}{2} \nu - \frac{\pi}{4}\bigg) \sum_{n=0}^{\infty} \frac{\Gamma\big(2n + \frac{3}{2} - \nu\big)\Gamma\big(2n + \frac{3}{2} + \nu\big)}{2^{2n+1} (2n+1)! \Gamma\big(\frac{1}{2} - \nu\big) \Gamma\big(\frac{1}{2} + \nu\big)} \frac{1}{x^{2n+1}}\bigg)
\end{align*}

\noindent Thus, the dominating behavior is $x^{-1/2}$ at infinity. As $\varphi_{\nu}(x) = x^{-\nu} J_{\nu}(x)$, we have the following asymptotic bound for $\varphi_{\nu}$:
\begin{equation}
|\varphi_{\nu}(x)| \le M_2 x^{-\nu-1/2}.
\end{equation}

While this is true asymptotically, this bound may not be true near $0$. The entirety of $\varphi_{\nu}$ implies that it is bounded near $0$, i.e. $|\varphi_{\nu}(x)| \le M_1$ near $0$. Stitching these two behaviors together gives the bound
\begin{equation}
|\varphi_{\nu}(x)| \le M_1 + M_2 x^{-\nu-1/2}.
\end{equation}

\noindent We can pick $M_1$ large enough to ensure that $|\varphi_{\nu}(x)| \le M_1$ on a large enough interval $(0,R)$ until the asymptotic behavior for large $x$ takes over. We can without loss of generality assume that $R > 1$.
\end{proof}

\begin{corollary} \label{cor:phin_bounded}
If $-\frac{1}{2} < \nu$, $\varphi_{\nu}$ is bounded on $\R^+$ and furthermore $\displaystyle \|\varphi_{\nu}\|_{\infty} = \varphi_{\nu}(0) = \frac{1}{2^{\nu} \Gamma(\nu+1)}$.
\end{corollary}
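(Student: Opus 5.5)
Boundedness follows directly from \Cref{lem:asymptotics_of_jnu}. When $\nu > -\frac{1}{2}$ the exponent $-\nu-\frac{1}{2}$ is negative. Hence $x^{-\nu-1/2} \le R^{-\nu-1/2}$ on $(R,\infty)$, and since $|\varphi_\nu| \le M_1$ on $(0,R)$, we get $|\varphi_\nu| \le \max(M_1, M_2 R^{-\nu-1/2})$ on all of $\R^+$. The value at the origin comes from the power series: $\varphi_\nu(x) = \sum_n \frac{(-1)^n}{n!\,\Gamma(n+\nu+1)}\, 2^{-2n-\nu} x^{2n}$, so $\varphi_\nu(0) = \frac{1}{2^\nu \Gamma(\nu+1)}$.

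The real content is showing that the supremum is attained at $0$. I would use the Poisson integral representation, valid for $\nu > -\frac{1}{2}$:
\begin{equation*}
J_\nu(x) = \frac{(x/2)^\nu}{\Gamma(\nu+\frac{1}{2})\Gamma(\frac{1}{2})} \int_{-1}^{1} (1-t^2)^{\nu-1/2} \cos(xt)\,dt .
\end{equation*}
This gives
\begin{equation*}
\varphi_\nu(x) = \frac{1}{2^\nu \Gamma(\nu+\frac{1}{2})\sqrt{\pi}} \int_{-1}^{1} (1-t^2)^{\nu-1/2}\cos(xt)\,dt .
\end{equation*}
The weight $(1-t^2)^{\nu-1/2}$ is positive and integrable exactly because $\nu > -\frac{1}{2}$; this is where the hypothesis enters essentially. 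Bounding $|\cos(xt)| \le 1$ gives $|\varphi_\nu(x)| \le \varphi_\nu(0)$. One can check that this equals $\frac{1}{2^\nu\Gamma(\nu+1)}$ via the Beta integral $\int_{-1}^1 (1-t^2)^{\nu-1/2}\,dt = B(\frac{1}{2}, \nu+\frac{1}{2})$. In any case the inequality does not need that evaluation, since the bound is literally the integral at $x=0$.

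The main obstacle is justifying the Poisson representation, which the paper has not stated. I would either cite it (Watson, \S3.3) or verify it directly. For the direct check, expand $\cos(xt)$ in its Taylor series, interchange sum and integral (justified by dominated convergence, since the weight is integrable), and evaluate each term $\int_{-1}^1 t^{2n}(1-t^2)^{\nu-1/2}\,dt = B(n+\frac{1}{2}, \nu+\frac{1}{2})$. Matching the resulting coefficients against the series for $\varphi_\nu$ uses the duplication formula $\Gamma(n+\frac{1}{2}) = \frac{(2n)!\sqrt{\pi}}{4^n n!}$.

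Finally, I would remark that for $-1<\nu<-\frac{1}{2}$ the weight is not integrable, and by \Cref{lem:asymptotics_of_jnu} $\varphi_\nu$ grows like $x^{-\nu-1/2}$ times an oscillating factor. This is consistent with the hypothesis being necessary.
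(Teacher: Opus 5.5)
Your proposal is correct and follows essentially the same route as the paper: the Poisson integral representation (Watson 3.3.3) for $\nu > -\frac{1}{2}$, the bound $|\cos(xt)| \le 1$, and the Beta-integral evaluation showing the resulting bound equals $\frac{1}{2^{\nu}\Gamma(\nu+1)} = \varphi_{\nu}(0)$. The paper only cites the representation, so your term-by-term verification of it via $\Gamma(n+\frac{1}{2}) = \frac{(2n)!\sqrt{\pi}}{4^n n!}$ is a correct optional addition that makes the argument self-contained.
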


\begin{proof}
This bound relies on an integral representation for the Bessel function $J_{\nu}$ for positive $x$ and $-\frac{1}{2} < \nu$ \cite[3.3.3]{Watson}:
\begin{equation*}
J_{\nu}(x) = \frac{x^{\nu}}{2^{\nu} \Gamma\big(\nu+\frac{1}{2}\big) \sqrt{\pi}} \int_{-1}^1 (1-t^2)^{\nu-1/2} \cos(xt)\,dt.
\end{equation*}

\noindent From this, we have the following bound on $\varphi_{\nu}$:
\begin{equation}
|\varphi_{\nu}(x)| \le \frac{1}{2^{\nu} \Gamma\big(\nu+\frac{1}{2}\big) \sqrt{\pi}} \int_{-1}^1 (1-t^2)^{\nu-1/2}\,dt = \frac{1}{2^{\nu}\Gamma\big(\nu+\frac{1}{2}\big)\sqrt{\pi}} \frac{\sqrt{\pi}\Gamma\big(\nu+\frac{1}{2}\big)}{\Gamma(\nu+1)} = \frac{1}{2^{\nu} \Gamma(\nu+1)} = \varphi_{\nu}(0).
\end{equation}
\end{proof}

We can also show that $\varphi_{\nu}'$ is a bounded function on $\R^+$ when $-\frac{1}{2} < \nu$, though the above methodology does not directly apply as $\varphi_{\nu}'(x) = x^{-\nu} J_{\nu+1}(x)$ which does not allow for complete cancellation of the powers of $x$. However, from our above analysis, $x^{-\nu} J_{\nu}(x) \sim x^{-\nu-1/2}$ for large $x$, so $\varphi_{\nu}'$ is bounded on $(R,\infty)$. Boundedness on $[0,R]$ follows from the entirety of $\varphi_{\nu}'$. These results are critical to understanding on which functions the Fourier--Bessel transform can be defined as an integral transform and the differential structure of the Fourier--Bessel transform.

\begin{theorem} \label{thm:domain_of_fb}
For $\nu > -1$, $\mathcal{F}_{\nu}$ can be defined on $L^1((0,R), x^{2\nu+1}\,dx) \cap L^1((R,\infty), x^{\nu+1/2}\,dx)$.
\end{theorem}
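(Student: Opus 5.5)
The plan is to show that for $f$ in the stated intersection and every fixed $y\in\R^+$, the integral $\int_{\R^+}\varphi_\nu(xy)f(x)x^{2\nu+1}\,dx$ converges absolutely. Then $\mathcal{F}_\nu f(y)$ is well defined pointwise. We also want a bound on $|\mathcal{F}_\nu f(y)|$ in terms of the two weighted $L^1$ norms, with explicit $y$-dependence. The only tool needed is \Cref{lem:asymptotics_of_jnu}, applied to the rescaled argument $xy$. Let $R$ be the radius from the lemma, chosen so that $|\varphi_\nu|\le M_1$ on $(0,R)$ and $|\varphi_\nu(t)|\le M_2 t^{-\nu-1/2}$ on $(R,\infty)$. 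We may take the same $R$ appearing in the statement, since the intersection space does not depend on which finite $R$ is chosen: on a compact interval $[a,b]\subset(0,\infty)$ the weights $x^{2\nu+1}$ and $x^{\nu+1/2}$ are comparable.

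Fix $y>0$ and split $\R^+$ at $x=R/y$. On $(0,R/y)$ we have $xy<R$, so $|\varphi_\nu(xy)|\le M_1$. On $(R/y,\infty)$ we have $|\varphi_\nu(xy)|x^{2\nu+1}\le M_2 y^{-\nu-1/2}x^{\nu+1/2}$. This gives
\begin{equation*}
\int_{\R^+}|\varphi_\nu(xy)f(x)|x^{2\nu+1}\,dx \le M_1\int_0^{R/y}|f(x)|x^{2\nu+1}\,dx + M_2 y^{-\nu-1/2}\int_{R/y}^\infty |f(x)|x^{\nu+1/2}\,dx .
\end{equation*}
The intervals $(0,R/y)$ and $(R/y,\infty)$ do not match the given split at $R$, so the remaining step is to transfer the weights across the middle interval between $R$ and $R/y$.

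\textbf{Case $y\le 1$.} Here $R/y\ge R$, and the first integral has an extra piece over $(R,R/y)$. On this compact interval $x^{2\nu+1}=x^{\nu+1/2}\cdot x^{\nu+1/2}\le C_y x^{\nu+1/2}$, with $C_y=\max(R^{\nu+1/2},(R/y)^{\nu+1/2})$. The choice of maximum handles the sign of $\nu+1/2$, which is where the two regimes $\nu<-\frac12$ and $\nu>-\frac12$ differ.

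\textbf{Case $y>1$.} Here the second integral has an extra piece over $(R/y,R)$. On this compact interval $x^{\nu+1/2}=x^{2\nu+1}x^{-\nu-1/2}\le C'_y x^{2\nu+1}$, with the constant again chosen according to the sign of $\nu+1/2$.

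In both cases the integral is finite and bounded by $C(y)\big(\|f\|_{L^1((0,R),x^{2\nu+1}dx)}+\|f\|_{L^1((R,\infty),x^{\nu+1/2}dx)}\big)$. Hence $\mathcal{F}_\nu f(y)$ is defined for every $y>0$ and depends linearly on $f$.

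I expect the main obstacle to be this bookkeeping of the mismatch between the fixed split point $R$ and the $y$-dependent split point $R/y$. The constants blow up as $y\to0$ or $y\to\infty$ depending on the sign of $\nu+\frac12$, so the argument gives pointwise definedness for each $y$ but not a uniform bound. The statement only asserts definedness, so a uniform bound is not needed here. If continuity in $y$ is also wanted, I would take $y$ in a compact subinterval of $\R^+$ and apply dominated convergence with the dominating function built above, whose constants are uniform on that subinterval.
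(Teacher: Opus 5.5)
Your proof is correct. It uses the same idea as the paper: split $\R^+$ into a region where $\varphi_{\nu}$ is bounded and a region where the tail bound $M_2 t^{-\nu-1/2}$ holds. The difference is where you split, and your choice is the more careful one.

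The paper splits at the fixed point $x=R$. It writes $|\varphi_{\nu}(xy)|\le M_1$ on $(0,R)$ and $|\varphi_{\nu}(xy)|\le M_2(xy)^{-\nu-1/2}$ on $(R,\infty)$. This gives $|\mathcal{F}_{\nu}f(y)|\le M_1\int_0^R|f|x^{2\nu+1}\,dx+M_2y^{-\nu-1/2}\int_R^{\infty}|f|x^{\nu+1/2}\,dx$ with $y$-independent coefficients. But the lemma sorts according to whether $t=xy$, not $x$, lies below or above $R$. So this step is only literally justified at $y=1$, or for $\nu>-\frac12$, where both bounds can be made to hold on all of $\R^+$ (take $M_1=\varphi_{\nu}(0)$ and enlarge $M_2$).

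For $-1<\nu<-\frac12$ and $y\neq 1$, the paper's pointwise bounds fail:
\begin{itemize}
\item For small $y$ and $x$ just above $R$, $|\varphi_{\nu}(xy)|\approx\varphi_{\nu}(0)>0$, while $M_2(xy)^{-\nu-1/2}\to 0$.
\item For large $y$, $\sup_{0<x<R}|\varphi_{\nu}(xy)|$ grows like $y^{-\nu-1/2}$, so no $y$-independent $M_1$ works.
\end{itemize}

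Splitting at $R/y$ puts the lemma's bounds where they actually hold. The price is comparing $x^{2\nu+1}$ with $x^{\nu+1/2}$ on the compact interval between $R$ and $R/y$, with the maximum handling the sign of $\nu+\frac12$. That price is exactly the $y$-dependent constant this regime forces, not an artifact of your method. The theorem only asserts absolute convergence for each fixed $y>0$, so your pointwise bound $C(y)(\cdots)$ is what is needed.

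Your remark that the intersection space does not depend on the choice of $R$ also settles the ambiguity in which $R$ the statement means.
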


\begin{proof}
The proof is straightforward with the bound for $\varphi_{\nu}$. We will split $\R^+$ into two pieces: $(0,R)$ and $(R,\infty)$. For the former, we will use continuity to get a bound for $\varphi_{\nu}$, and for the latter, we will use its asymptotic behavior. Let $y\in\R^+$ and $f\in L^1((0,R), x^{2\nu+1}\,dx) \cap L^1((R,\infty), x^{\nu+1/2}\,dx)$ where $R > 1$ is chosen as in \Cref{lem:asymptotics_of_jnu}, then
\begin{align*}
|\mathcal{F}_{\nu}f(y)| & \le \int_{\R^+} |\varphi_{\nu}(xy)| |f(x)| x^{2\nu+1}\,dx \\
&= \int_0^R |\varphi_{\nu}(xy)| |f(x)| x^{2\nu+1}\,dx + \int_R^{\infty} |\varphi_{\nu}(xy)| |f(x)| x^{2\nu+1}\,dx \\
& \le \int_0^R M_1 |f(x)| x^{2\nu+1}\,dx + \int_R^{\infty} M_2 (xy)^{-\nu-1/2} |f(x)| x^{2\nu+1}\,dx \\
&= M_1 \int_0^R |f(x)| x^{2\nu+1}\,dx + M_2 y^{-\nu-1/2} \int_R^{\infty} |f(x)| x^{\nu+1/2}\,dx \\
&< \infty
\end{align*}
\end{proof}

\begin{corollary}
When $-\frac{1}{2} < \nu$, $\mathcal{F}_{\nu}$ can be defined on $L^1(\R^+, x^{2\nu+1}\,dx)$.
\end{corollary}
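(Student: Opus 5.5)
The quickest route is to use \Cref{cor:phin_bounded} directly rather than \Cref{thm:domain_of_fb}. For $-\frac{1}{2} < \nu$, that corollary gives $|\varphi_{\nu}(x)| \le \varphi_{\nu}(0) = \frac{1}{2^{\nu}\Gamma(\nu+1)}$ for all $x \in \R^+$. For $f \in L^1(\R^+, x^{2\nu+1}\,dx)$ and any $y \in \R^+$, the integrand $\varphi_{\nu}(xy) f(x) x^{2\nu+1}$ is measurable, since $\varphi_{\nu}$ is continuous (indeed entire). It is dominated by $\varphi_{\nu}(0)|f(x)|x^{2\nu+1}$, which is integrable. Hence
\begin{equation*}
|\mathcal{F}_{\nu} f(y)| \le \frac{1}{2^{\nu}\Gamma(\nu+1)} \|f\|_{L^1(\R^+, x^{2\nu+1}\,dx)} < \infty,
\end{equation*}
so the integral defining $\mathcal{F}_{\nu} f(y)$ converges absolutely for every $y$. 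The bound is uniform in $y$, so in fact $\mathcal{F}_{\nu}$ maps $L^1(\R^+, x^{2\nu+1}\,dx)$ boundedly into $L^\infty$.

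As a consistency check, one can also deduce the statement from \Cref{thm:domain_of_fb}. It suffices to show $L^1(\R^+, x^{2\nu+1}\,dx) \subseteq L^1((0,R), x^{2\nu+1}\,dx) \cap L^1((R,\infty), x^{\nu+1/2}\,dx)$. The first inclusion is trivial. For the second, on $(R,\infty)$ with $R>1$ we have $x^{\nu+1/2} = x^{2\nu+1} x^{-\nu-1/2}$. Since $-\nu - \frac{1}{2} < 0$, the factor satisfies $x^{-\nu-1/2} \le R^{-\nu-1/2} \le 1$. This gives $\int_R^\infty |f| x^{\nu+1/2}\,dx \le \int_R^\infty |f| x^{2\nu+1}\,dx$.

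No step here is a real obstacle. The only point needing care is that the sign condition $\nu > -\frac{1}{2}$ is exactly what makes $x^{-\nu-1/2}$ decreasing. This explains why the corollary fails to follow this way for $-1<\nu<-\frac{1}{2}$: there $\varphi_{\nu}$ grows at infinity, and the direct bound is unavailable.
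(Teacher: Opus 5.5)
Your proof is correct, but your main argument differs from the paper's. The paper's proof is exactly your ``consistency check'': for $\nu>-\frac{1}{2}$ and $x>1$ one has $x^{\nu+1/2}<x^{2\nu+1}$, so $L^1(\R^+,x^{2\nu+1}\,dx)$ sits inside the domain given by \Cref{thm:domain_of_fb}.

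Your primary route instead dominates the integrand directly by $\varphi_{\nu}(0)|f(x)|x^{2\nu+1}$, using the global bound of \Cref{cor:phin_bounded}. This is shorter and more informative. It gives $\|\mathcal{F}_{\nu}f\|_{\infty}\le\varphi_{\nu}(0)\|f\|$ uniformly in $y$, which is exactly \Cref{lem:RL_Linfinity}; the paper proves that separately later and needs it for the Riemann--Lebesgue density argument. By contrast, the estimate inherited from \Cref{thm:domain_of_fb} carries a factor $y^{-\nu-1/2}$, which blows up as $y\to 0^+$ when $\nu>-\frac{1}{2}$. The cost of your route is that it rests on the Poisson integral representation behind \Cref{cor:phin_bounded}, which is available only for $\nu>-\frac{1}{2}$.

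The paper's route needs only the large-$x$ asymptotics of \Cref{lem:asymptotics_of_jnu}. It also presents the corollary as a weight comparison on $(R,\infty)$ inside a domain theorem valid for all $\nu>-1$. This shows that the simplification is precisely an inclusion of weighted $L^1$ spaces that reverses when $-1<\nu<-\frac{1}{2}$, which is the paper's organizing theme.
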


\begin{proof}
Let $-\frac{1}{2} < \nu$, then $\nu + \frac{1}{2} < 2\nu + 1$ and for $x > 1$, $x^{\nu+1/2} < x^{2\nu+1}$. Let $R > 1$ be as in \Cref{lem:asymptotics_of_jnu} and $f\in L^1(\R^+, x^{2\nu+1}\,dx)$, then
\begin{align*}
\int_0^R |f(x)| x^{2\nu+1}\,dx + \int_R^{\infty} |f(x)| x^{\nu+1/2}\,dx & \le \int_0^R |f(x)| x^{2\nu+1}\,dx + \int_R^{\infty} |f(x)| x^{2\nu+1}\,dx \\
&= \int_{\R^+} |f(x)| x^{2\nu+1}\,dx \\
&< \infty
\end{align*}

\noindent Thus $f\in L^1((0,R), x^{2\nu+1}\,dx) \cap L^1((R,\infty), x^{\nu+1/2}\,dx)$, and its Fourier--Bessel transform exists as an integral transform.
\end{proof}

When $-\frac{1}{2} < \nu$ and $R > 1$, we have the inclusion of $L^1((R,\infty), x^{2\nu+1}\,dx)$ in $L^1((R,\infty), x^{\nu+1/2}\,dx)$ which supplies the proof and greatly simplifies the classification for the domain of $\mathcal{F}_{\nu}$. However, this is a stricter subset than the domain presented in \Cref{thm:domain_of_fb}, and even the domain presented therein may be a stricter subset than the maximal domain for $\mathcal{F}_{\nu}$ as we used a somewhat crude upper bound for $\varphi_{\nu}$. This is a marked difference from Fourier theory: the Fourier transform of a function $f$ is defined \emph{if and only if} $f\in L^1(\R,dx)$ as $|e^{-ixy}| \equiv 1$. Determining the maximal domain for integral transforms in general is a difficult problem and requires a detailed analysis of the integral kernel. In practice, it is better to have a simpler---if non-optimal---prescription for the domain of integral operators.

While we can have a simpler domain in the case of $-\frac{1}{2} < \nu$, this fails if $\nu < -\frac{1}{2}$ as the inclusion $L^1((R,\infty), x^{2\nu+1}\,dx)$ in $L^1((R,\infty), x^{\nu+1/2}\,dx)$ fails, so we cannot obtain a simpler domain. This leads to the following definition for the domain of the Fourier--Bessel transform as an integral transform.

\begin{definition}
If $-1 < \nu < -\frac{1}{2}$, then $\dom(\mathcal{F}_{\nu}) = L^1((0,R), x^{2\nu+1}\,dx) \cap L^1((R,\infty), x^{\nu+1/2}\,dx)$, where $R$ is some number much larger than $1$. If instead $-\frac{1}{2} < \nu$, then $\dom(\mathcal{F}_{\nu}) = L^1(\R^+, x^{2\nu+1}\,dx)$.
\end{definition}

\section{Properties of the Fourier--Bessel Transform} \label{sec:4}

We will explore the $L^1$ theory of the Fourier--Bessel transform in two separate settings: $-\frac{1}{2} < \nu$ and $-1 < \nu < -\frac{1}{2}$. Again, we can omit discussion of the case that $\nu = -\frac{1}{2}$ as this is simply the Fourier cosine transform. We shall see that the theory is fundamentally very different between $-\frac{1}{2} < \nu$ and $-1 < \nu < -\frac{1}{2}$. While the $L^1$ theory of the Fourier--Bessel transform is fairly clean in the $-\frac{1}{2} < \nu$ setting, it is plagued with challenges in the $-1 < \nu < -\frac{1}{2}$ setting. To date, there has not been a successful generalization of translation in this setting due in part to the growth behavior $\varphi_{\nu}$ has if $-1 < \nu < -\frac{1}{2}$. Moreover, the Riemann--Lebesgue lemma fails if $-1 < \nu < -\frac{1}{2}$. A further confounding factor is that the measure $x^{2\nu+1}\,dx$ is singular at the origin when $-1 < \nu < -\frac{1}{2}$, constraining the regularity of the functions in the domain, and decays at infinity, allowing for functions of very slow decay to be in the domain.

Upon a perusal of the literature, one can see that the Fourier--Bessel transform is not studied when $\nu \le -1$. The Fourier--Bessel transform becomes fairly ill-posed for such $\nu$ as the measure induces a non-integrable singularity at $0$ so that $\chi_{(0,1)}$ is not in the domain of a Fourier--Bessel transform that one might envision for $\nu \le -1$, likewise for the Gaussian for a similar reason so that even the direct $L^2$ theory explored in \cite[Ch. 9]{Akhiezer} is a nonstarter.

As the Fourier--Bessel kernel does not enjoy the group-theoretic properties that the Fourier kernel does, some of the key features of the Fourier transform do not have immediate analogues in the Fourier--Bessel setting. Specifically, the nature of translations in the Fourier--Bessel setting is not straightforward and requires a very different approach and detailed analysis \cite{Delsarte,Levitan_bessel} which leads into the study of hypergroups \cite{Levitan_translation, Rosler_hypergroups}. We omit this discussion for brevity but point the reader to the aforementioned works for further reading. Of particular note is the requirement that $-\frac{1}{2} \le \nu$ in Levitan's original work.

Translations, and therefore modulations, aside, the Fourier--Bessel transform does share some of the same general features of the Fourier transform: there is a dilation property, it has a differential structure, and there is an associated Riemann--Lebesgue lemma when $-\frac{1}{2} < \nu$. These are the focus of this section.

\begin{definition}
Let $\alpha > 0$ and $-1 < \nu$. The dilation operator, $\D_{\nu,\alpha} : L^1(\R^+,x^{2\nu+1}\,dx) \to L^1(\R^+,x^{2\nu+1}\,dx)$ is given by
$$ \D_{\nu,\alpha} f(x) = \alpha^{\nu+1} f(\alpha x). $$
\end{definition}

Just as with the usual dilation operator on $\R$, $\D_{\nu,\alpha}$ has the following group properties: $\D_{\nu,\alpha}\D_{\nu,\beta} = \D_{\nu,\alpha\beta}$ and $\D_{\nu,\alpha}^{-1} = \D_{\nu,\alpha^{-1}}$. These can be shown via direct computation. This leads into our next lemma.

\begin{lemma}
For $f\in L^1(\R^+,x^{2\nu+1}\,dx)$, we have that $\mathcal{F}_{\nu} \D_{\nu,\alpha}f = \D_{\nu,\alpha^{-1}} \mathcal{F}_{\nu} f = \D_{\nu,\alpha}^{-1} \mathcal{F}_{\nu} f$. Furthermore, if $f\in L^1(\R^+,x^{2\nu+1}\,dx)\cap L^2(\R^+,x^{2\nu+1}\,dx)$, then $\langle \D_{\nu,\alpha} f, \D_{\nu,\alpha} f\rangle = \langle f,f\rangle$ so that $\D_{\nu,\alpha}$ extends to a unitary on $L^2(\R^+,x^{2\nu+1}\,dx)$.
\end{lemma}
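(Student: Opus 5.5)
The plan is a direct computation with the substitution $u = \alpha x$, relying on the fact that the kernel $\varphi_{\nu}(xy)$ depends only on the product $xy$.

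First, we check that $\D_{\nu,\alpha}$ maps $L^1(\R^+,x^{2\nu+1}\,dx)$ into itself, so that $\mathcal{F}_{\nu}\D_{\nu,\alpha} f$ makes sense. Substituting $u=\alpha x$ gives
$$\int_{\R^+} \alpha^{\nu+1}|f(\alpha x)| x^{2\nu+1}\,dx = \alpha^{\nu+1}\alpha^{-2\nu-2}\int_{\R^+}|f(u)|u^{2\nu+1}\,du = \alpha^{-\nu-1}\|f\|_1 .$$
For the transform itself, fix $y\in\R^+$ and write
$$\mathcal{F}_{\nu}\D_{\nu,\alpha}f(y) = \int_{\R^+}\varphi_{\nu}(xy)\,\alpha^{\nu+1}f(\alpha x)\,x^{2\nu+1}\,dx .$$
Under $u=\alpha x$ we have $xy = u(y/\alpha)$, $x^{2\nu+1} = \alpha^{-2\nu-1}u^{2\nu+1}$ and $dx = \alpha^{-1}du$. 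Collecting powers of $\alpha$ yields
$$\mathcal{F}_{\nu}\D_{\nu,\alpha}f(y) = \alpha^{-\nu-1}\int_{\R^+}\varphi_{\nu}(u\,y/\alpha)\,f(u)\,u^{2\nu+1}\,du = \alpha^{-(\nu+1)}\mathcal{F}_{\nu}f(\alpha^{-1}y) = \D_{\nu,\alpha^{-1}}\mathcal{F}_{\nu}f(y).$$
The second equality, $\D_{\nu,\alpha^{-1}} = \D_{\nu,\alpha}^{-1}$, is just the group property already noted before the lemma.

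The integrals converge absolutely because the substitution preserves absolute integrability, which is exactly the $L^1$ computation above. Here there is one subtle point. For $-1<\nu<-\frac12$, $L^1(\R^+,x^{2\nu+1}\,dx)$ is not contained in $\dom(\mathcal{F}_{\nu})$. I would therefore read the identity as holding whenever $f\in\dom(\mathcal{F}_{\nu})$, and check that this domain is also dilation invariant. This follows from the same substitution applied on $(0,R)$ and $(R,\infty)$ separately: the cutoff moves to $R/\alpha$, and on the compact piece between $R$ and $R/\alpha$ the two weights $x^{2\nu+1}$ and $x^{\nu+1/2}$ are comparable, so membership is unaffected. This bookkeeping is the only real obstacle.

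For the unitarity claim, the same substitution gives
$$\langle \D_{\nu,\alpha}f,\D_{\nu,\alpha}f\rangle = \alpha^{2\nu+2}\int_{\R^+}|f(\alpha x)|^2x^{2\nu+1}\,dx = \int_{\R^+}|f(u)|^2u^{2\nu+1}\,du = \langle f,f\rangle .$$
This is in fact valid for every $f\in L^2(\R^+,x^{2\nu+1}\,dx)$. Consequently $\D_{\nu,\alpha}$ is an isometry on the dense subspace $L^1\cap L^2$ and extends uniquely to an isometry of $L^2(\R^+,x^{2\nu+1}\,dx)$; indeed it is given there by the same formula. Its inverse $\D_{\nu,\alpha^{-1}}$ is also an isometry, so the extension is surjective and hence unitary.
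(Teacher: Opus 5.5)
Your proof is correct and is exactly the direct change-of-variables computation the paper has in mind. The paper states this lemma without a written proof, remarking only that the dilation identities follow by direct computation; your observation that the isometry holds on all of $L^2(\R^+,x^{2\nu+1}\,dx)$, with inverse $\D_{\nu,\alpha^{-1}}$, is what gives unitarity.

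Your caveat that for $-1<\nu<-\frac{1}{2}$ the identity should be read on $\dom(\mathcal{F}_{\nu})$ rather than on all of $L^1(\R^+,x^{2\nu+1}\,dx)$ is a genuine sharpening of the statement. The bookkeeping is also lighter than you suggest: applying the same substitution to $|\varphi_{\nu}(xy)\,\D_{\nu,\alpha}f(x)|\,x^{2\nu+1}$ shows that $\mathcal{F}_{\nu}\D_{\nu,\alpha}f(y)$ converges absolutely exactly when $\mathcal{F}_{\nu}f(y/\alpha)$ does.
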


In addition to the dilation structure of the Fourier--Bessel transform, there is a differential structure to it analogous to the Fourier transform by way of the operator $\Delta_{\nu}$ as $\varphi_{\nu}$ is an eigenfunction of $\Delta_{\nu}$. There is a marked difference between the Fourier and Fourier--Bessel settings: there is no first order differential operator theory for the Fourier--Bessel transform as $\varphi_{\nu}$ is an eigenfunction of a second order differential operator, not an elementary first order differential operator.

\begin{theorem} \label{thm:fb_of_delta}
Let $-\frac{1}{2} < \nu$. If $f\in C^2(\R^+)$ such that it and its derivatives are continuous at $0$ and $f,f',\Delta_{\nu}f\in L^1(\R^+, x^{2\nu+1}\,dx)$, then $\mathcal{F}_{\nu}(\Delta_{\nu} f)(y) = -y^2 \mathcal{F}_{\nu} f(y)$.
\end{theorem}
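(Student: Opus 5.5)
The plan is to integrate by parts twice, using the divergence form of the Bessel operator. For $C^2$ functions,
\begin{equation*}
x^{2\nu+1}\Delta_{\nu} g(x) = \frac{d}{dx}\Big(x^{2\nu+1} g'(x)\Big).
\end{equation*}
This makes $\Delta_{\nu}$ formally symmetric with respect to $x^{2\nu+1}\,dx$. Fix $y>0$ and set $g(x) = \varphi_{\nu}(xy)$. Then $\Delta_{\nu} g(x) = y^2 (\Delta_{\nu}\varphi_{\nu})(xy) = -y^2 g(x)$, using $\Delta_{\nu}\varphi_{\nu} = -\varphi_{\nu}$ and the scaling of $\Delta_{\nu}$. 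So it suffices to prove
\begin{equation*}
\int_0^{\infty} \Delta_{\nu} f(x)\, g(x)\, x^{2\nu+1}\,dx = \int_0^{\infty} f(x)\,\Delta_{\nu} g(x)\, x^{2\nu+1}\,dx.
\end{equation*}

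First I check that both integrals converge absolutely. Since $-\frac{1}{2}<\nu$, \Cref{cor:phin_bounded} gives that $g$ is bounded. The remark after it gives that $\varphi_{\nu}'$ is bounded, so $g'(x) = y\varphi_{\nu}'(xy)$ is bounded too. Together with $f, f', \Delta_{\nu} f \in L^1(\R^+, x^{2\nu+1}\,dx)$, this makes both integrals finite.

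Next I work on $[\epsilon, N]$. Green's identity, that is, integrating by parts twice in divergence form, gives
\begin{equation*}
\int_{\epsilon}^{N} (g\,\Delta_{\nu} f - f\,\Delta_{\nu} g)\, x^{2\nu+1}\,dx = \Big[x^{2\nu+1}\big(g f' - f g'\big)\Big]_{\epsilon}^{N}.
\end{equation*}
The left side converges to the desired difference by dominated convergence, so everything reduces to showing that the boundary terms vanish along suitable sequences. At $0$ this is easy. The hypotheses say $f$ and $f'$ extend continuously to $0$, and $g, g'$ are bounded, while $2\nu+1>0$. Hence $\epsilon^{2\nu+1}(g f' - f g')(\epsilon)\to 0$.

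The main obstacle is the boundary at infinity, because there is no pointwise decay hypothesis on $f$ or $f'$. Set $h(x) = x^{2\nu+1}(|f(x)|+|f'(x)|)$. By hypothesis $h \in L^1(R,\infty)$, so $\liminf_{x\to\infty} x\,h(x) = 0$, and in particular $\liminf_{x\to\infty} h(x)=0$. Choose $N_k\to\infty$ with $h(N_k)\to 0$. Since $g$ and $g'$ are bounded, the boundary term at $N_k$ tends to $0$.

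Letting $\epsilon\to 0$ and $N=N_k\to\infty$ then gives the identity. Hence $\mathcal{F}_{\nu}(\Delta_{\nu} f)(y) = \int f\,\Delta_{\nu} g\, x^{2\nu+1}\,dx = -y^2\mathcal{F}_{\nu} f(y)$. A finer version of the infinity step would use the decay $\varphi_{\nu}(x) = O(x^{-\nu-1/2})$ from \Cref{lem:asymptotics_of_jnu}, but boundedness together with the subsequence argument should be enough.
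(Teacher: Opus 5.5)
Your proof is correct and follows the paper's route. Both arguments integrate by parts twice using the divergence form $x^{2\nu+1}\Delta_{\nu} = \frac{d}{dx}x^{2\nu+1}\frac{d}{dx}$, kill the boundary term at $0$ via continuity of $f,f'$ and $x^{2\nu+1}\to 0$, and kill the boundary term at infinity using boundedness of $\varphi_{\nu},\varphi_{\nu}'$ together with $dx$-integrability of $x^{2\nu+1}f$ and $x^{2\nu+1}f'$. The only real difference is at infinity: the paper first shows the boundary limit exists and then that a nonzero limit would contradict integrability, whereas you apply Green's identity on $[\epsilon,N]$ and pass to a sequence $N_k$ with $h(N_k)\to 0$, which is slightly cleaner because it never needs the limit to exist.
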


\begin{proof}
Since $\Delta_{\nu}f \in L^1(\R^+,x^{2\nu+1}\,dx)$, its Fourier--Bessel transform exists. Employing integration by parts, we have
\begin{align*}
\int_{\R^+} \varphi_{\nu}(xy) \Delta_{\nu} f(x) x^{2\nu+1}\,dx &= \int_{\R^+} \varphi_{\nu}(xy) \frac{d}{dx} x^{2\nu+1}\frac{d}{dx} f(x)\,dx \\
&= x^{2\nu+1} \varphi_{\nu}(xy) f'(x)\bigg|_0^{\infty} - \int_{\R^+} \frac{d}{dx} (\varphi_{\nu}(xy)) f'(x) x^{2\nu+1}\,dx
\end{align*}

Evaluating $x^{2\nu+1} \varphi_{\nu}(xy) f'(x)$ at $x= 0$ gives $0$ as $f'(0^+)$ exists and $\varphi_{\nu}(0)$ exists. From the boundedness of $\varphi_{\nu}'$, the integral on the right side is well-defined as $f'\in L^1(\R^+, x^{2\nu+1}\,dx)$. This in conjunction with the existence of the integral on the left side allows us to conclude that $\displaystyle \lim_{x\to 0} x^{2\nu+1} \varphi_{\nu}(xy) f'(x)$ exists. Suppose that the limit is nonzero and, without loss of generality, positive. Then for some large enough $R$, $x^{2\nu+1} \varphi_{\nu}(xy) f'(x) \ge \frac{L}{2}$ for all $x > R$. Thus
$$ \int_R^{\infty} x^{2\nu+1} \varphi_{\nu}(xy) f'(x)\,dx \ge \int_R^{\infty} \frac{L}{2} = \infty $$

\noindent However,
$$ \int_R^{\infty} |x^{2\nu+1} \varphi_{\nu}(xy) f'(x)|\,dx \le \varphi_{\nu}(0) \int_R^{\infty} |f'(x)| x^{2\nu+1}\,dx < \infty $$

\noindent which is a contradiction. Thus, the limit must be $0$, and so
$$ \int_{\R^+} \varphi_{\nu}(xy) \Delta_{\nu} f(x) x^{2\nu+1}\,dx = -\int_{\R^+} \frac{d}{dx} (\varphi_{\nu}(xy)) f'(x) x^{2\nu+1}\,dx. $$

\noindent Similar analysis then gives
$$ \int_{\R^+} \varphi_{\nu}(xy) \Delta_{\nu} f(x) x^{2\nu+1}\,dx = \int_{\R^+} \Delta_{\nu}(\varphi_{\nu}(xy)) f(x) x^{2\nu+1}\,dx = -y^2 \int_{\R^+} \varphi_{\nu}(xy) f(x) x^{2\nu+1}\,dx, $$

\noindent or equivalently, $\mathcal{F}_{\nu}(\Delta_{\nu} f)(y) = -y^2 \mathcal{F}_{\nu} f(y)$.
\end{proof}

This proof is fairly similar to the proof in the Fourier case with some minor but crucial changes due to the weighted measure. It is not clear that this result holds as cleanly when $-1 < \nu < -\frac{1}{2}$ due to the unboundedness of $\varphi_{\nu}$ and the boundary terms at infinity in the integration by parts steps not necessarily being $0$. A similar result to the above holds involving the opposite arrangements of $\mathcal{F}_{\nu}$ and $\Delta_{\nu}$.

\begin{theorem} \label{thm:fb_of_x2}
Let $-\frac{1}{2} < \nu$. If $f,x^2 f\in L^1(\R^+, x^{2\nu+1}\,dx)$, then $\Delta_{\nu} \mathcal{F}_{\nu} f(y) = -\mathcal{F}_{\nu}(x^2 f)(y)$.
\end{theorem}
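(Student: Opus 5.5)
The plan is to differentiate under the integral sign twice in $y$ and then use the eigenfunction identity for $\varphi_{\nu}$. For fixed $x>0$, set $g_x(y) = \varphi_{\nu}(xy)$. The chain rule gives
\begin{equation*}
\partial_y g_x(y) = x\varphi_{\nu}'(xy), \qquad \partial_y^2 g_x(y) = x^2 \varphi_{\nu}''(xy).
\end{equation*}
Therefore
\begin{equation*}
\Delta_{\nu,y}\, g_x(y) = x^2\Big(\varphi_{\nu}''(xy) + \frac{2\nu+1}{xy}\varphi_{\nu}'(xy)\Big) = x^2 (\Delta_{\nu}\varphi_{\nu})(xy) = -x^2\varphi_{\nu}(xy).
\end{equation*}
So it suffices to show that
\begin{align*}
\frac{d}{dy}\mathcal{F}_{\nu}f(y) &= \int_{\R^+} x\varphi_{\nu}'(xy) f(x) x^{2\nu+1}\,dx, \\
\frac{d^2}{dy^2}\mathcal{F}_{\nu}f(y) &= \int_{\R^+} x^2\varphi_{\nu}''(xy) f(x) x^{2\nu+1}\,dx,
\end{align*}
for every $y>0$. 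Once these hold, forming $\frac{d^2}{dy^2} + \frac{2\nu+1}{y}\frac{d}{dy}$ and combining the integrands yields $-\int \varphi_{\nu}(xy)x^2 f(x)x^{2\nu+1}\,dx = -\mathcal{F}_{\nu}(x^2f)(y)$.

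The justification will use the standard dominated-convergence theorem for differentiating under the integral sign. This requires bounds on the $y$-derivatives that are uniform in $y$ and integrable against $|f(x)|x^{2\nu+1}\,dx$. For the first derivative, the remark after \Cref{cor:phin_bounded} shows that $\varphi_{\nu}'$ is bounded when $-\frac{1}{2}<\nu$. This gives $|x\varphi_{\nu}'(xy)f(x)| \le \|\varphi_{\nu}'\|_\infty\, x|f(x)|$. Since $x \le 1 + x^2$, the hypotheses $f, x^2f\in L^1(\R^+,x^{2\nu+1}\,dx)$ make this dominant integrable. For the second derivative, we need $\varphi_{\nu}''$ to be bounded on $\R^+$. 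The ODE gives
\begin{equation*}
\varphi_{\nu}''(t) = -\varphi_{\nu}(t) - \frac{2\nu+1}{t}\varphi_{\nu}'(t).
\end{equation*}
Near $0$, $\varphi_{\nu}$ is entire and even, so $\varphi_{\nu}''$ is continuous and hence bounded on $[0,R]$. For $t>R$, the term $\frac{1}{t}\varphi_{\nu}'(t)$ is bounded by the boundedness of $\varphi_{\nu}'$, and $\varphi_{\nu}$ is bounded by \Cref{cor:phin_bounded}. Hence $|x^2\varphi_{\nu}''(xy)f(x)| \le C x^2|f(x)|$, which is integrable by hypothesis.

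The main obstacle is making the domination uniform and locating exactly where $-\frac{1}{2}<\nu$ enters. The bounds above are already uniform in $y$ over all of $\R^+$, because they use the sup norms of $\varphi_{\nu},\varphi_{\nu}',\varphi_{\nu}''$ on $\R^+$. Those sup norms are finite only because $-\frac{1}{2}<\nu$: by \Cref{lem:asymptotics_of_jnu}, these functions decay like $t^{-\nu-1/2}$ at infinity, so they are bounded. When $\nu<-\frac{1}{2}$ they grow instead, and the argument breaks down. To be careful, I would apply the mean value theorem to the difference quotients, $\frac{g_x(y+h)-g_x(y)}{h} = x\varphi_{\nu}'(x\xi)$, which is bounded by the same dominant. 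Dominated convergence then gives both derivative formulas, and also continuity of the second derivative, so $\Delta_{\nu}\mathcal{F}_{\nu}f$ is defined pointwise on $\R^+$ and the identity follows.
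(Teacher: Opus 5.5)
Your proposal is correct and follows the same route as the paper: differentiate under the integral sign via the mean value theorem and dominated convergence, using boundedness of $\varphi_{\nu}'$ and integrability of $xf$ (your $x \le 1+x^2$ is the paper's split at $x=1$), then apply $\Delta_{\nu}\varphi_{\nu} = -\varphi_{\nu}$ in the variable $y$. Your ODE argument that $\varphi_{\nu}''$ is bounded on $\R^+$ fills in the justification for the second differentiation, which the paper covers only with ``proceeding similarly.''
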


\begin{proof}
We first remark that if $f, x^2f \in L^1(\R^+, x^{2\nu+1}\,dx$, we get for free that $xf \in L^1(\R^+, x^{2\nu+1}\,dx)$.
\begin{align*}
\int_{\R^+} x|f(x)| x^{2\nu+1}\,dx &= \int_0^1 x |f(x)| x^{2\nu+1}\,dx + \int_1^{\infty} x |f(x)| x^{2\nu+1}\,dx \\
& \le \int_0^1 |f(x)| x^{2\nu+1}\,dx + \int_1^{\infty} x^2 |f(x)|^2 x^{2\nu+1}\,dx
\end{align*}

\noindent The last two integrals are finite by assumption, and so $xf\in L^1(\R^+, x^{2\nu+1}\,dx)$. We proceed to prove the theorem by first considering only the first derivative for which it is critical that $xf\in L^1(\R^+, x^{2\nu+1}\,dx)$.
$$ \frac{d}{dy} \int_{\R^+} \varphi_{\nu}(xy) f(x) x^{2\nu+1}\,dx = \lim_{h\to 0} \int_{\R^+} \frac{\varphi_{\nu}(x(y+h))-\varphi_{\nu}(xy)}{h} f(x) x^{2\nu+1}\,dx. $$

From the mean value theorem, $\frac{\varphi_{\nu}(x(y+h))-\varphi_{\nu}(xy)}{h} = x \varphi_{\nu}'(xy^*)$ for some $y^*\in (y,y+h)$. Boundedness of $\varphi_{\nu}'$ then gives us that the integrand is uniformly bounded by a multiple of $xf(x) x^{2\nu+1}$ which is integrable, so Lebesgue dominated convergence allows us to bring the limit inside the integral:
$$ \frac{d}{dy} \int_{\R^+} \varphi_{\nu}(xy) f(x) x^{2\nu+1}\,dx = \int_{\R^+} \frac{\partial}{\partial y} (\varphi_{\nu}(xy)) f(x) x^{2\nu+1}\,dx. $$

\noindent Proceeding similarly, we have
\begin{align*}
\bigg(y^{-2\nu-1} \frac{d}{dy} y^{2\nu+1} \frac{d}{dy}\bigg) \int_{\R^+} \varphi_{\nu}(xy) f(x) x^{2\nu+1} \,dx &= \int_{\R^+} \bigg(y^{-2\nu-1} \frac{\partial}{\partial y} y^{2\nu+1} \frac{\partial}{\partial y}\bigg)(\varphi_{\nu}(xy)) f(x) x^{2\nu+1}\,dx \\
&= \int_{\R^+} -x^2 \varphi_{\nu}(xy) f(x) x^{2\nu+1}\,dx
\end{align*}

\noindent and $\Delta_{\nu} \mathcal{F}_{\nu}f = -\mathcal{F}_{\nu}(x^2 f)$ as desired.
\end{proof}

The above two theorems are very closely tied to the spectral theory of $\Delta_{\nu}$ on $L^2(\R^+,x^{2\nu+1}\,dx)$. We will see later that the Fourier--Bessel transform extends to a unitary, so it exactly diagonalizes $-\Delta_{\nu}$ into a multiplication operator which is the foundation of the spectral theorem for (essentially) self-adjoint operators.

\subsection{The Riemann--Lebesgue Lemma in the \texorpdfstring{$-\frac{1}{2} < \nu$}{-1/2 < ν} Setting} \label{sec:4_sub:1}

Perhaps the most important feature of the $L^1$ theory of the Fourier transform is the Riemann--Lebesgue lemma. The Riemann--Lebesgue lemma states that the Fourier transform maps $L^1(\R,dx)$ into $C_0(\R)$, the continuous functions that tend to $0$ at infinity. This then allows for estimates of asymptotic behavior for the Fourier transform of functions whose first $k$ derivatives are all in $L^1(\R)$ which plays an important role in the $L^2$ theory of the Fourier transform. The Fourier--Bessel transform also has a Riemann--Lebesgue lemma that is very analogous to the usual Fourier Riemann--Lebesgue lemma. The proof requires some technical lemmas first.

\begin{lemma} \label{lem:RL_Linfinity}
If $-\frac{1}{2} < \nu$ and $f\in L^1(\R^+,x^{2\nu+1}\,dx)$, then $\|\mathcal{F}_{\nu}f\|_{\infty} \le \varphi_{\nu}(0) \|f\|$, i.e. the Fourier--Bessel transform maps into $L^{\infty}(\R^+)$.
\end{lemma}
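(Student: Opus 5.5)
The plan is to bound the integral pointwise in $y$ using \Cref{cor:phin_bounded}. That corollary gives $\|\varphi_{\nu}\|_{\infty} = \varphi_{\nu}(0) = \frac{1}{2^{\nu}\Gamma(\nu+1)}$ when $-\frac{1}{2} < \nu$. Here $\|f\|$ denotes the norm of $L^1(\R^+,x^{2\nu+1}\,dx)$.

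Fix $f\in L^1(\R^+,x^{2\nu+1}\,dx)$ and $y\in\R^+$. By the corollary following \Cref{thm:domain_of_fb}, the integral defining $\mathcal{F}_{\nu}f(y)$ converges absolutely. Moving the absolute value inside the integral and using $|\varphi_{\nu}(xy)| \le \varphi_{\nu}(0)$ for every $x>0$, we get
\begin{equation*}
|\mathcal{F}_{\nu}f(y)| \le \int_{\R^+} |\varphi_{\nu}(xy)|\,|f(x)|\,x^{2\nu+1}\,dx \le \varphi_{\nu}(0)\int_{\R^+}|f(x)|\,x^{2\nu+1}\,dx = \varphi_{\nu}(0)\|f\|.
\end{equation*}
This bound is uniform in $y$. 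Taking the supremum over $y\in\R^+$ gives $\|\mathcal{F}_{\nu}f\|_{\infty}\le\varphi_{\nu}(0)\|f\|$.

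To say that $\mathcal{F}_{\nu}f$ genuinely lies in $L^{\infty}(\R^+)$, we also need it to be measurable. The cleanest route is to show it is continuous in $y$. The function $(x,y)\mapsto\varphi_{\nu}(xy)$ is continuous, since $\varphi_{\nu}$ is entire. It is also dominated by the integrable function $\varphi_{\nu}(0)|f(x)|x^{2\nu+1}$. Dominated convergence therefore gives $\mathcal{F}_{\nu}f(y_n)\to\mathcal{F}_{\nu}f(y)$ whenever $y_n\to y$.

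There is no serious obstacle, since the whole argument rests on the sharp sup bound from \Cref{cor:phin_bounded}. The one point to emphasize is that this bound is exactly where the hypothesis $-\frac{1}{2}<\nu$ enters. For $-1<\nu<-\frac{1}{2}$, \Cref{lem:asymptotics_of_jnu} only gives $|\varphi_{\nu}(xy)|\le M_2(xy)^{-\nu-1/2}$, which grows, so no uniform-in-$y$ bound is available.
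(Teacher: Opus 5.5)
Your proposal is correct and matches the paper's proof: insert the uniform bound $|\varphi_{\nu}(xy)| \le \varphi_{\nu}(0)$ from \Cref{cor:phin_bounded} into the absolute value of the integral and note that the resulting bound does not depend on $y$. Your additional continuity argument for measurability is also sound; the paper proves that fact separately, by the same dominated-convergence argument, as \Cref{lem:RL_continuity}.
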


\begin{proof}
Recall from \Cref{cor:phin_bounded} that $|\varphi_{\nu}(x)| \le \varphi_{\nu}(0)$, therefore
\begin{align*}
|\mathcal{F}_{\nu} f(y)| & \le \int_{\R^+} |\varphi_{\nu}(xy)| |f(x)| x^{2\nu+1}\,dx \\
& \le \varphi_{\nu}(0) \int_{\R^+} |f(x)| x^{2\nu+1}\,dx \\
&= \varphi_{\nu}(0)\|f\|
\end{align*}

\noindent As this is a uniform bound on $\mathcal{F}_{\nu}f$ independent of $y$, the result follows accordingly.
\end{proof}

\begin{lemma} \label{lem:RL_continuity}
If $-\frac{1}{2} < \nu$ and $f\in L^1(\R^+,x^{2\nu+1}\,dx)$, then $\mathcal{F}_{\nu} f$ is continuous.
\end{lemma}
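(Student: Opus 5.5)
The plan is to apply the Lebesgue dominated convergence theorem directly to the integral defining $\mathcal{F}_{\nu} f$. Fix $y_0\in\R^+$ (or $y_0 = 0$, since $\varphi_{\nu}$ is entire and the integral also makes sense there), and let $y_n \to y_0$ be any sequence in $\R^+$. It suffices to show that $\mathcal{F}_{\nu} f(y_n) \to \mathcal{F}_{\nu} f(y_0)$. This sequential criterion then gives continuity at every point.

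First I would establish pointwise convergence of the integrands. The function $\varphi_{\nu}$ is entire, hence continuous on $[0,\infty)$. Therefore, for each fixed $x\in\R^+$,
$$ \varphi_{\nu}(xy_n) f(x) x^{2\nu+1} \to \varphi_{\nu}(xy_0) f(x) x^{2\nu+1}. $$
This holds at every point where $f(x)$ is finite, so almost everywhere.

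Next I would supply a dominating function that does not depend on $n$. Here the hypothesis $-\frac{1}{2} < \nu$ is essential. By \Cref{cor:phin_bounded} we have $|\varphi_{\nu}(xy_n)| \le \varphi_{\nu}(0)$ for all $n$ and all $x$. Consequently
$$ |\varphi_{\nu}(xy_n) f(x)| x^{2\nu+1} \le \varphi_{\nu}(0) |f(x)| x^{2\nu+1}. $$
The right-hand side is integrable over $\R^+$ because $f\in L^1(\R^+,x^{2\nu+1}\,dx)$. Dominated convergence then yields
$$ \int_{\R^+} \varphi_{\nu}(xy_n) f(x) x^{2\nu+1}\,dx \to \int_{\R^+} \varphi_{\nu}(xy_0) f(x) x^{2\nu+1}\,dx, $$
which is exactly the claim.

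There is no genuine obstacle. The only delicate point is obtaining a dominating function uniform in $y$, and this is precisely why the statement assumes $-\frac{1}{2} < \nu$. For $-1<\nu<-\frac{1}{2}$, the bound in \Cref{lem:asymptotics_of_jnu} gives the majorant $M_2 (xy)^{-\nu-1/2}$. This majorant grows in $x$, and it is not uniform as $y$ varies near $y_0$. One could still handle that case locally for $y$ bounded away from $0$ and from $\infty$, using the domain from \Cref{thm:domain_of_fb}. That is not needed for the present statement, so the proof stays short.
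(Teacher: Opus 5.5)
Your proposal is correct and follows essentially the same route as the paper: sequential continuity via dominated convergence, with pointwise convergence from the continuity of $\varphi_{\nu}$ and the uniform majorant $\varphi_{\nu}(0)|f(x)|x^{2\nu+1}$ supplied by \Cref{cor:phin_bounded}. Your closing aside is also sound, though slightly over-cautious: for $-1<\nu<-\frac{1}{2}$ the majorant only needs $y$ bounded above, not bounded away from $0$.
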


\begin{proof}
To prove this, we rely on Lebesgue dominated convergence and the boundedness and continuity of $\varphi_{\nu}$. Let $(y_n) \subseteq \R^+$ be a sequence converging to $y\in\R^+$. We wish to show that $\mathcal{F}_{\nu}f(y_n)$ converges to $\mathcal{F}_{\nu}f(y)$.

By the boundedness of $\varphi_{\nu}$ per \Cref{cor:phin_bounded}, $|\varphi_{\nu}(x y_n) f(x)| x^{2\nu+1} \le \varphi_{\nu}(0) |f(x)| x^{2\nu+1}$ for all $n$ so that $|\varphi_{\nu}(x y_n) f(x)| x^{2\nu+1}$ is dominated by $\varphi_{\nu}(0) |f(x)| x^{2\nu+1}$ which is integrable by assumption. Continuity of $\varphi_{\nu}$ and Lebesgue dominated converge then give that
\begin{align*}
\lim_{n\to\infty} \mathcal{F}_{\nu}f(y_n) &= \lim_{n\to\infty} \int_{\R^+} \varphi_{\nu}(xy_n) f(x) x^{2\nu+1}\,dx \\
&= \int_{\R^+} \lim_{n\to\infty} \big(\varphi_{\nu}(x y_n) f(x) x^{2\nu+1}\big)\,dx \\
&= \int_{\R^+} \varphi_{\nu}(xy) f(x) x^{2\nu+1}\,dx \\
&= \mathcal{F}_{\nu}f(y)
\end{align*}

\noindent and so $\mathcal{F}_{\nu}f$ is continuous.
\end{proof}

\begin{lemma} \label{lem:RL_density}
If $-\frac{1}{2} < \nu$ and $f\in L^1(\R^+,x^{2\nu+1}\,dx)$, then $\mathcal{F}_{\nu} f$ tends to $0$ at infinity.
\end{lemma}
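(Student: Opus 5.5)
The plan is a density argument: verify the decay on a dense class of nice functions, then pass to all of $L^1(\R^+,x^{2\nu+1}\,dx)$ using the uniform bound of \Cref{lem:RL_Linfinity}.

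For the dense class I will take $C_c^\infty(\R^+)$ functions supported away from $0$, i.e. in some $[a,b]$ with $0<a<b<\infty$. This class is dense in $L^1(\R^+,x^{2\nu+1}\,dx)$, because the weight $x^{2\nu+1}$ is locally bounded above and below on $(0,\infty)$, so standard truncation and mollification arguments apply. Given $f$ and $\varepsilon>0$, I choose $g$ in this class with $\|f-g\|<\varepsilon$. Then \Cref{lem:RL_Linfinity} gives
$$|\mathcal{F}_{\nu}f(y)| \le \varphi_{\nu}(0)\varepsilon + |\mathcal{F}_{\nu}g(y)|$$
for every $y$. It therefore suffices to show $\mathcal{F}_{\nu}g(y)\to 0$ as $y\to\infty$.

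For such $g$ there are two ways to get the decay, and I would present the first.

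\emph{Route via \Cref{thm:fb_of_delta}.} Since $g$ vanishes near $0$ and near $\infty$, all of $g$, $g'$ and $\Delta_{\nu}g$ are smooth and compactly supported in $(0,\infty)$. Hence the hypotheses of \Cref{thm:fb_of_delta} hold, and
$$\mathcal{F}_{\nu}(\Delta_{\nu}g)(y) = -y^2\,\mathcal{F}_{\nu}g(y).$$
Applying \Cref{lem:RL_Linfinity} to $\Delta_{\nu}g$ then yields
$$|\mathcal{F}_{\nu}g(y)| \le \frac{\varphi_{\nu}(0)\,\|\Delta_{\nu}g\|}{y^2} \to 0.$$

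\emph{Direct route via \Cref{lem:asymptotics_of_jnu}.} For $x\in[a,b]$ and $y$ large we have $xy\ge ay>R$. Then $|\varphi_{\nu}(xy)|\le M_2(ay)^{-\nu-1/2}$, and the exponent $-\nu-1/2$ is negative precisely because $\nu>-\frac12$. This gives
$$|\mathcal{F}_{\nu}g(y)| \le M_2\,(ay)^{-\nu-1/2}\,\|g\|\to 0.$$

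Either route suffices. Combining with the first step, $\limsup_{y\to\infty}|\mathcal{F}_{\nu}f(y)|\le\varphi_{\nu}(0)\varepsilon$ for every $\varepsilon>0$, which proves the lemma.

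The only step needing care is the density claim. The weight is singular or vanishing at $0$, but it is locally integrable there since $2\nu+1>-1$, so cutting off near $0$ costs little by dominated convergence. Cutting off near $\infty$ is likewise cheap. Mollification on a compact subset of $(0,\infty)$ is then standard, because there the weighted and unweighted $L^1$ norms are comparable.
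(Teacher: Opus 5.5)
Your proof is correct. It shares the paper's skeleton: approximate $f$ in $L^1(\R^+,x^{2\nu+1}\,dx)$, then control the error uniformly in $y$ through $|\varphi_{\nu}|\le\varphi_{\nu}(0)$. Where you differ is in the dense class and in how decay is obtained on it.

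\textbf{The paper's route.} The paper uses step functions and reduces, by linearity and dilation, to the single function $\chi_{(0,1)}$. It then invokes the Bessel integral identity $\int_0^1 x^{\nu+1}J_{\nu}(xy)\,dx=y^{-1}J_{\nu+1}(y)$, which gives the closed form $\mathcal{F}_{\nu}\chi_{(0,1)}(y)=y^{-\nu-1}J_{\nu+1}(y)=O(y^{-\nu-3/2})$.

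\textbf{Your $\Delta_{\nu}$ route.} You take $C_c^\infty$ functions supported in $[a,b]\subset(0,\infty)$. For such $g$ the integrations by parts behind $\mathcal{F}_{\nu}(\Delta_{\nu}g)=-y^2\mathcal{F}_{\nu}g$ have no boundary terms at all, so you do not even need the full strength of \Cref{thm:fb_of_delta}. This route needs no integral tables and gives the rate $O(y^{-2})$. It also mirrors the classical Fourier proof via derivatives, so it transfers to any transform whose kernel is a bounded eigenfunction of a symmetric second-order operator, which fits the paper's stated aim well.

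\textbf{Your direct route.} This one is the most economical and needs no smoothness whatsoever. Any $g\in L^1$ supported in $[a,\infty)$ satisfies $|\mathcal{F}_{\nu}g(y)|\le M_2(ay)^{-\nu-1/2}\|g\|$ once $ay>R$. So you could skip mollification and simply approximate $f$ by $f\chi_{[a,\infty)}$. The error $\|f\chi_{(0,a)}\|\to 0$ by dominated convergence with dominant $|f|$; local integrability of the weight plays no role there.

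\textbf{What the paper's computation buys.} It shows that step functions have decaying transforms for every $\nu>-1$. The hypothesis $\nu>-\tfrac12$ therefore enters only through the uniform bound in the approximation step, which is exactly the dichotomy the paper exploits in the subsection on $-1<\nu<-\tfrac12$. Your direct route uses $\nu>-\tfrac12$ a second time, through the negativity of $-\nu-\tfrac12$, so it blurs the two roles. Your $\Delta_{\nu}$ route keeps them separate if you bound $\mathcal{F}_{\nu}(\Delta_{\nu}g)$ using the compact support, by $(M_1+M_2(by)^{-\nu-1/2})\|\Delta_{\nu}g\|$, instead of via \Cref{lem:RL_Linfinity}. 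Since $-\nu-\tfrac12<\tfrac12<2$, this still gives decay when $-1<\nu<-\tfrac12$.
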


\begin{proof}
To prove this, we will use a density argument. The characteristic functions $\chi_{(a,b)}$ with $0\le a < b < \infty$ are linearly dense in $L^1(\R^+,x^{2\nu+1}\,dx)$, so if we prove the Riemann--Lebesgue lemma for these functions, much of the argument will have been taken care of already. We can simplify our analysis even further and only consider the characteristic functions $\chi_{(0,a)}$ as $\chi_{(a,b)} = \chi_{(0,b)} - \chi_{(0,a)}$ and extend by linearity. We can simplify further still and consider $\chi_{(0,1)}$ because the Fourier--Bessel transform plays nicely with dilations.

Evaluating $\mathcal{F}_{\nu}\chi_{(0,1)}$, from \cite[5.1.1]{Watson} we have for $-1 < \nu$
\begin{align*}
\mathcal{F}_{\nu}\chi_{(0,1)}(y) &= \int_{\R^+} \varphi_{\nu}(xy) \chi_{(0,1)}(x)x^{2\nu+1}\,dx \\
&= y^{-\nu} \int_0^1 x^{\nu+1} J_{\nu}(xy)\,dx \\
&= y^{-\nu-1} J_{\nu+1}(y)
\end{align*}

\noindent From the asymptotics of the Bessel function per \Cref{lem:asymptotics_of_jnu}, this will tend to $0$ at infinity. By linearity, the Fourier--Bessel transform of any step function will decay to $0$.

Now we wish to extend to all of $L^1(\R^+,x^{2\nu+1}\,dx)$. Central to this argument is that $|\varphi_{\nu}(x)| \le \varphi_{\nu}(0)$. Let $f\in L^1(\R^+,x^{2\nu+1}\,dx)$, $\varepsilon > 0$, and $g$ be a step function such that $\|f-g\| < \frac{\varepsilon}{\varphi_{\nu}(0)}$ whose existence can be guaranteed by the density of the step functions in $L^1(\R^+,x^{2\nu+1}\,dx)$. Then
\begin{equation*}
\mathcal{F}_{\nu}(f-g)(y) = \int_{\R^+} \varphi_{\nu}(xy)(f(x)-g(x))x^{2\nu+1}\,dx
\end{equation*}

\noindent and so we get
\begin{align*}
|\mathcal{F}_{\nu}(f-g)(y)| & \le \int_{\R^+} \big|\varphi_{\nu}(xy) (f(x)-g(x))|x^{2\nu+1}\,dx \\
& \le \varphi_{\nu}(0) \int_{\R^+} |f(x)-g(x)|x^{2\nu+1}\,dx \\
&= \varphi_{\nu}(0)\|f-g\| \\
&< \varepsilon.
\end{align*}

\noindent Therefore we have that $\mathcal{F}_{\nu}f$ and $\mathcal{F}_{\nu}g$ are arbitrarily close to each other for any $y$. As $\mathcal{F}_{\nu}g$ tends to $0$ at infinity, so too must $\mathcal{F}_{\nu}f$, completing the proof.
\end{proof}

\begin{theorem}[Riemann--Lebesgue Lemma]
$\mathcal{F}_{\nu}: L^1(\R^+, x^{2\nu+1}\,dx)\to C_0(\R^+)$.
\end{theorem}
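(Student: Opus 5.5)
The statement is implicitly restricted to $-\frac{1}{2} < \nu$, the setting of this subsection. In that range the theorem is essentially an assembly of the three preceding lemmas, so the plan is to verify the three defining properties of membership in $C_0(\R^+)$ one at a time. Throughout, fix $f\in L^1(\R^+,x^{2\nu+1}\,dx)$. By the corollary to \Cref{thm:domain_of_fb}, $\mathcal{F}_{\nu}f(y)$ is defined pointwise for every $y\in\R^+$ as an absolutely convergent integral.

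\emph{Boundedness.} Invoke \Cref{lem:RL_Linfinity} to get $\|\mathcal{F}_{\nu}f\|_{\infty}\le \varphi_{\nu}(0)\|f\|$. This also records that $\mathcal{F}_{\nu}$ is a bounded linear map into $L^{\infty}$, with operator norm at most $\varphi_{\nu}(0)$.

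\emph{Continuity.} Invoke \Cref{lem:RL_continuity}, which gives continuity on $\R^+$ by dominated convergence with dominating function $\varphi_{\nu}(0)|f(x)|x^{2\nu+1}$. The same argument applies to sequences $y_n\to 0^+$, because $\varphi_{\nu}$ is entire and hence continuous at $0$. So $\mathcal{F}_{\nu}f$ extends continuously to $[0,\infty)$ with value $\varphi_{\nu}(0)\int f\,x^{2\nu+1}\,dx$ at the origin, whichever convention for $\R^+$ is intended.

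\emph{Decay at infinity.} Invoke \Cref{lem:RL_density}. The only step I would double-check is the reduction there from $\chi_{(0,a)}$ to $\chi_{(0,1)}$ via dilation. Since $\chi_{(0,a)}(x)=\chi_{(0,1)}(x/a)=a^{\nu+1}\D_{\nu,a^{-1}}\chi_{(0,1)}(x)$, the dilation lemma gives
\[
\mathcal{F}_{\nu}\chi_{(0,a)}(y)=a^{\nu+1}\D_{\nu,a}\mathcal{F}_{\nu}\chi_{(0,1)}(y)=a^{2\nu+2}(ay)^{-\nu-1}J_{\nu+1}(ay).
\]
By \Cref{lem:asymptotics_of_jnu} this is $O(y^{-\nu-3/2})\to 0$. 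With this in hand, the $\varepsilon/3$-type approximation by step functions in that lemma closes the argument.

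Combining the three properties places $\mathcal{F}_{\nu}f$ in $C_0(\R^+)$, and linearity of the integral makes $\mathcal{F}_{\nu}$ a linear map into $C_0(\R^+)$. The main obstacle, such as it is, is bookkeeping rather than analysis: the uniform bound $|\varphi_{\nu}|\le\varphi_{\nu}(0)$ from \Cref{cor:phin_bounded} is what drives both the dominated convergence and the density step. That bound is exactly what fails for $-1<\nu<-\frac{1}{2}$, so the proof must explicitly stay within $-\frac{1}{2}<\nu$.
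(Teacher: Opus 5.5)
Your proposal is correct and takes the same approach as the paper, which proves the theorem as a synthesis of \Cref{lem:RL_Linfinity}, \Cref{lem:RL_continuity}, and \Cref{lem:RL_density} in the range $-\frac{1}{2}<\nu$. Your explicit check of the dilation reduction, $\mathcal{F}_{\nu}\chi_{(0,a)}(y)=a^{\nu+1}y^{-\nu-1}J_{\nu+1}(ay)$, is also correct; it fills in a step the paper only gestures at.
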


The proof of the Riemann--Lebesgue lemma in the Fourier--Bessel setting is a synthesis of the preceding lemmas.

\subsection{The Riemann--Lebesgue Lemma in the  \texorpdfstring{$-1 < \nu < -\frac{1}{2}$}{-1 < ν < -1/2} Setting} \label{sec:4_sub:2}

There are multiple failure points in an attempt to prove the existence of a Riemann--Lebesgue lemma for the $-1 < \nu < -\frac{1}{2}$ setting. In \Cref{lem:RL_Linfinity}, the boundedness of $\varphi_{\nu}$ when $-\frac{1}{2} < \nu$ is critical, but $\varphi_{\nu}$ grows at infinity when $-1 < \nu < -\frac{1}{2}$. Thus there is no guarantee that the Fourier--Bessel transform maps into $L^{\infty}(\R^+)$ which imperils the Riemann--Lebesgue lemma. Again, \Cref{lem:RL_continuity} suffers similarly. The biggest failure point occurs in \Cref{lem:RL_density}: the density argument collapses as $\varphi_{\nu}$ is not bounded. So while the Fourier--Bessel transform of a step function decays at infinity, this cannot be easily bootstrapped into an argument about the full domain of $\mathcal{F}_{\nu}$. Indeed, the Riemann--Lebesgue lemma fails if $-1 < \nu < -\frac{1}{2}$ as the next theorem shows.

\begin{theorem}
If $-1 < \nu < -\frac{1}{2}$, $\mathcal{F}_{\nu}$ does not have the Riemann--Lebesgue property.
\end{theorem}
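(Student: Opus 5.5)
The plan is to build an explicit $f\in\dom(\mathcal{F}_{\nu})$ whose transform does not tend to $0$ at infinity. In fact the construction should give $\limsup_{y\to\infty}|\mathcal{F}_{\nu}f(y)| = \infty$. The mechanism is the one singled out in the discussion preceding the statement. By \Cref{lem:asymptotics_of_jnu}, $\varphi_{\nu}(x)\approx c\,x^{-\nu-1/2}\cos(x-\theta)$ with $\theta=\frac{\pi}{2}\nu+\frac{\pi}{4}$, and for $-1<\nu<-\frac12$ this amplitude grows. So a functional of norm $\varepsilon$ concentrated at scale $\sim 1/y$ can produce values of size $\varepsilon\, y^{-\nu-1/2}$. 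To stay inside the simple part of the domain, all supports will lie in $(1,2)$, where every weight $x^{2\nu+1}$ and $x^{\nu+1/2}$ is comparable to $1$.

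\textbf{Step 1: a single bump.} Fix large $y$ and choose $x_0\in(1,2)$ with $x_0y-\theta\in 2\pi\mathbb{Z}$. Put $h=\chi_{(x_0,x_0+\delta)}$ with $\delta=\frac{1}{4y}$, so that $\cos(xy-\theta)\ge\cos\frac14>\frac12$ on the support. Using the leading term of the asymptotic expansion (the error is $O((xy)^{-\nu-3/2})$), I expect
\[
\mathcal{F}_{\nu}h(y)\ \ge\ c'\,\delta\, y^{-\nu-1/2} \quad\text{for all large } y,
\]
while $\|h\|_{\dom}\asymp\delta$. Hence the ratio $\mathcal{F}_{\nu}h(y)/\|h\|$ is of order $y^{-\nu-1/2}\to\infty$. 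This already shows that no bound of the type in \Cref{lem:RL_Linfinity} can hold. Moreover, for fixed $h$ the explicit formula $\mathcal{F}_{\nu}\chi_{(0,a)}(y)=a^{2\nu+2}(ay)^{-\nu-1}J_{\nu+1}(ay)$ from the proof of \Cref{lem:RL_density}, together with linearity, gives $|\mathcal{F}_{\nu}h(y')|=O(y'^{-\nu-3/2})\to 0$ as $y'\to\infty$.

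\textbf{Step 2: gliding hump.} Choose $y_1<y_2<\cdots$ inductively. Let $h_k$ be the bump from Step 1 built at $y_k$, with $\delta_k=\frac{1}{4y_k}$, and set $c_k=2^{-k}/\delta_k$. Then $f=\sum_k c_kh_k$ satisfies $\|f\|\le\sum_k 2^{-k}<\infty$, so $f\in\dom(\mathcal{F}_{\nu})$. If the $y_k$ grow fast enough, the supports can be taken pairwise disjoint, although this is not even needed. At $y=y_k$ the $k$-th term contributes at least $c'\,2^{-k}y_k^{-\nu-1/2}$. The earlier terms $j<k$ form a fixed step function, so by Step 1 their contribution is $O(y_k^{-\nu-3/2})$, which is negligible once $y_k$ is large. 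The later terms $j>k$ are controlled by the crude bound $|\varphi_{\nu}(xy_k)|\le M_2(xy_k)^{-\nu-1/2}\le M_2\,y_k^{-\nu-1/2}$ from \Cref{lem:asymptotics_of_jnu}, valid for $x>1$ and $xy_k>R$. This gives a contribution of at most $M_2\,y_k^{-\nu-1/2}\sum_{j>k}2^{-j}=M_2\,y_k^{-\nu-1/2}2^{-k}$.

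\textbf{Main obstacle.} Step 2 is the delicate point. The tail bound $M_2 2^{-k}$ is of the same order as the main term $c'2^{-k}$, so it cannot simply be absorbed. I would fix this by making the weights decay faster than geometrically, for instance $c_j\delta_j=\varepsilon_j$ with $\sum_{j>k}\varepsilon_j\le\frac{c'}{4M_2}\varepsilon_k$. Then choose $y_k$ so large that $\varepsilon_k y_k^{-\nu-1/2}\ge k$, which is possible because $-\nu-\frac12>0$. The conclusion is $|\mathcal{F}_{\nu}f(y_k)|\ge\frac{c'}{2}\varepsilon_k y_k^{-\nu-1/2}\ge\frac{c'}{2}\,k\to\infty$. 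Therefore $\mathcal{F}_{\nu}f\notin C_0(\R^+)$, and it is not even bounded.
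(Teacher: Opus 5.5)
Your proof is correct, but it takes a genuinely different route from the paper. The paper writes down one explicit function, $f(x)=\frac{2^{\nu+1}\sqrt{\pi}}{\Gamma(-\nu-\frac12)}(1-x^2)^{-\nu-3/2}\chi_{[0,1]}(x)$. It lies in $\dom(\mathcal{F}_{\nu})$ because $-\nu-\frac32>-1$ and $2\nu+1>-1$. The paper then reads off $\mathcal{F}_{\nu}f(y)=\cos y$ from the tabulated Sonine integral \cite[6.567.1]{gradshteyn2007}; with $\mu=-\nu-\frac32$ the resulting Bessel function is $J_{-1/2}$. That is a two-line argument, but it depends on a special closed-form integral and only shows non-decay: the transform it produces is bounded.

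Your gliding hump uses only the leading asymptotics of $J_{\nu}$ and the transform of $\chi_{(0,a)}$. So it works for any kernel whose amplitude grows. It also proves more: $\mathcal{F}_{\nu}$ does not even map $\dom(\mathcal{F}_{\nu})$ into $L^{\infty}(\R^+)$, which is exactly the breakdown of \Cref{lem:RL_Linfinity} that the paper only mentions informally. The cost is the bookkeeping you identified, and your fix works. A geometric $\varepsilon_j$ with small ratio already suffices, provided each $y_k$ is chosen after the earlier bumps are fixed.

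There is one small slip. On $(1,2)$, since $-\nu-\frac12>0$, you have $(xy_k)^{-\nu-1/2}\le 2^{-\nu-1/2}y_k^{-\nu-1/2}$, not $\le y_k^{-\nu-1/2}$. The extra factor is harmless and can be absorbed into $M_2$.

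Finally, Step 2 can be compressed with the uniform boundedness principle. The functionals $f\mapsto\mathcal{F}_{\nu}f(y)$ on $L^1((1,2),dx)$ have norms $\sup_{x\in(1,2)}|\varphi_{\nu}(xy)|\,x^{2\nu+1}\gtrsim y^{-\nu-1/2}\to\infty$. Hence some $f$ supported in $(1,2)$ already has an unbounded Fourier--Bessel transform.
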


\begin{proof}
To demonstrate that the Riemann--Lebesgue lemma does not hold, we simply supply an example for when it fails. Consider $f\in L^1((0,R), x^{2\nu+1}\,dx)\cap L^1((R,\infty), x^{\nu+1/2}\,dx)$, where $R > 1$ is as in \Cref{lem:asymptotics_of_jnu}, defined as
\begin{equation}
f(x) = \frac{2^{\nu+1}\sqrt{\pi}}{\Gamma\big(-\nu-\frac{1}{2}\big)} (1-x^2)^{-\nu-\frac{3}{2}} \chi_{[0,1]}(x),
\end{equation}

\noindent then $\mathcal{F}_{\nu}f(y) = \cos(y)$ \cite[6.567.1]{gradshteyn2007} and does not tend to $0$ as $y\to\infty$.
\end{proof}

This example was initially found via \texttt{Mathematica} by brute forcing a distributional Fourier--Bessel transform of $g(y) = \cos(y)$. This function was chosen as it is bounded and therefore not difficult to work with distributionally but does not tend to $0$ at infinity. The hope was that its inverse would live in $\dom(\mathcal{F}_{\nu})$.

The Riemann--Lebesgue lemma supplies many of the proofs for the $L^2$ theory of the Fourier transform and will also for the Fourier--Bessel transform as we shall see. The failure of the Riemann--Lebesgue lemma for the Fourier--Bessel transform indicates that the $L^1$ to $L^2$ theory is not as clean when $-1 < \nu < -\frac{1}{2}$. The Riemann--Lebesgue lemma can be rescued for $-1 < \nu < -\frac{1}{2}$ for a stricter subspace than the domain of the Fourier--Bessel transform. Specifically, as $\varphi_{\nu}$ has at most polynomial growth, functions with sufficient regularity will have Fourier--Bessel transforms that decay at infinity. See \Cref{sec:7_sub:1} for a related analysis.

\section{A Richer \texorpdfstring{$L^2$}{L2} Theory for Integral Transforms} \label{sec:5}

With the $L^1$ theory of the Fourier--Bessel transform explored, we wish to develop its $L^2$ theory. As such, we must restrict our attention to $L^1(\R^+,x^{2\nu+1}\,dx) \cap L^2(\R^+,x^{2\nu+1}\,dx)$. This space is trivially dense in $L^2(\R^+,x^{2\nu+1}\,dx)$ as it contains the step functions, however proofs are still challenging in this setting unlike in the Fourier case as the integral kernels $\varphi_{\nu}$ do not have a group structure. Instead, we look to nice subspaces of $L^1(\R^+,x^{2\nu+1}\,dx) \cap L^2(\R^+,x^{2\nu+1}\,dx)$ for which proofs are simpler. Prior to that, we establish a general theorem regarding integral operators. First, we make a general remark about operator theory and extensions of operators. The proof is a standard functional analytic argument.

\begin{theorem}
Let $T$ be a densely defined and closed (or closable) operator on a Hilbert space $\mathfrak{H}$. If $T \phi_n = \lambda_n \phi_n$, $\{\phi_n\}$ is a basis for $\mathfrak{H}$, and the $\lambda_n$ are bounded, then $T$ can be extended abstractly to a bounded operator $\widetilde{T}$ on $\mathfrak{H}$ with $\widetilde{T}\mid_{\dom{T}} \equiv T$ and $\|\widetilde{T}\| = \operatorname{sup}_n |\lambda_n|$.
\end{theorem}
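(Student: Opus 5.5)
The plan is to treat $\{\phi_n\}$ as an orthonormal basis of $\mathfrak{H}$. This is the setting in which the norm identity $\|\widetilde{T}\| = \sup_n|\lambda_n|$ can hold; for a mere Riesz basis one only gets a bound by $\sup_n|\lambda_n|$ times the Riesz constants. The eigenvalue hypothesis $T\phi_n = \lambda_n\phi_n$ implicitly places every $\phi_n$ in $\dom T$. So by linearity $T$ is already defined on the finite linear span $\mathcal{S}_0 = \operatorname{span}\{\phi_n\}$, and there it acts diagonally:
$$ T\Big(\sum_{n\le N} c_n\phi_n\Big) = \sum_{n\le N} \lambda_n c_n \phi_n. $$

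The first step is the norm estimate on $\mathcal{S}_0$. By Parseval,
$$ \Big\|T\sum_{n\le N} c_n\phi_n\Big\|^2 = \sum_{n\le N}|\lambda_n|^2|c_n|^2 \le \Big(\sup_n|\lambda_n|\Big)^2 \Big\|\sum_{n\le N} c_n\phi_n\Big\|^2. $$
Hence $T|_{\mathcal{S}_0}$ is bounded with norm at most $M := \sup_n|\lambda_n|$. Since $\{\phi_n\}$ is a basis, $\mathcal{S}_0$ is dense in $\mathfrak{H}$, so the standard bounded-linear-extension argument produces a unique bounded $\widetilde{T}$ on $\mathfrak{H}$ with $\|\widetilde{T}\|\le M$. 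Explicitly,
$$ \widetilde{T}f = \sum_n \lambda_n \langle f,\phi_n\rangle \phi_n. $$
The reverse inequality is immediate: $\|\widetilde{T}\phi_n\| = |\lambda_n|$ with $\|\phi_n\| = 1$, so taking the supremum over $n$ gives $\|\widetilde{T}\| = M$.

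The main point, and the only place closedness enters, is showing that $\widetilde{T}$ agrees with $T$ on all of $\dom T$, not just on $\mathcal{S}_0$. The intended argument uses the partial sums, though it needs a further input about $T$, as explained below.

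1. Fix $f\in\dom T$ and let $f_N = \sum_{n\le N}\langle f,\phi_n\rangle\phi_n \in \mathcal{S}_0$.
2. Then $f_N\to f$, and $Tf_N = \widetilde{T}f_N \to \widetilde{T}f$ by continuity of $\widetilde{T}$.
3. Since $T$ is closed, $(f,\widetilde{T}f)$ lies in the graph of $T$. If $T$ is only closable, it lies in the graph of $\overline{T}$, whose domain is all of $\mathfrak{H}$ and on which $\overline{T}=\widetilde{T}$.

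The difficulty is that step 3 does not yet finish the proof. Closedness places $(f,\widetilde{T}f)$ in the graph, and $(f,Tf)$ is also in the graph, but a graph contains at most one pair with first coordinate $f$ only when the graph belongs to an operator, so concluding $Tf = \widetilde{T}f$ requires that the graph of $T$ restricted to $\dom T$ coincides with the closure of the graph of $T|_{\mathcal{S}_0}$. This holds if $\mathcal{S}_0$ is a core for $T$, in which case $\overline{T} = \overline{T|_{\mathcal{S}_0}} = \widetilde{T}$ and the restriction statement is immediate.

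If $\mathcal{S}_0$ is not assumed to be a core, I would argue instead through the adjoint. $T$ is densely defined and closable, and the $\lambda_n$ are bounded, so one should check that each $\phi_n\in\dom T^*$ with $T^*\phi_n = \overline{\lambda_n}\phi_n$. Then $\langle Tf,\phi_n\rangle = \lambda_n\langle f,\phi_n\rangle$ for every $f\in\dom T$, and expanding $Tf$ in the basis gives $Tf = \widetilde{T}f$. I expect this adjoint step, namely justifying that the $\phi_n$ are eigenvectors of $T^*$, to be the real obstacle. For the integral operators in this paper it will be verified directly, since the kernels are real and symmetric in $x$ and $y$, so $T^*$ acts on the $\phi_n$ by the same kernel. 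In the abstract statement I would record it as the precise meaning of the hypotheses.
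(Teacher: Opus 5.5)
Your steps 1--3 are the standard argument the paper alludes to; it gives no proof beyond calling the argument standard. Your remark that $\|\widetilde T\|=\sup_n|\lambda_n|$ requires reading ``basis'' as ``orthonormal basis'' is also correct.

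\emph{The genuine error is your claim that step 3 does not finish the proof.} It does, and the extra hypotheses you propose are unnecessary. Write $G(\cdot)$ for graphs.
\begin{itemize}
\item When $T$ is closed, $G(T)$ is by definition the graph of the single-valued map $T$. You have $(f_N,Tf_N)\in G(T)$ and $(f_N,Tf_N)\to(f,\widetilde Tf)$, so closedness gives $(f,\widetilde Tf)\in G(T)$. That says exactly $f\in\dom T$ and $Tf=\widetilde Tf$. There is no second pair with first coordinate $f$ to worry about, because $G(T)$ is the graph of an operator.
\item When $T$ is only closable, $\overline{G(T)}$ is, by the definition of closability, the graph of the operator $\overline T$. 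It contains both $(f,Tf)$ and $(f,\widetilde Tf)$, so again $Tf=\widetilde Tf$.
\end{itemize}
You even assert $\overline T=\widetilde T$ in step 3, and that by itself gives $T\subseteq\overline T=\widetilde T$, which is the claim.

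Your worry about cores dissolves once you note two facts. First, $\widetilde T$ is the closure of $T|_{\mathcal S_0}$: since $\widetilde T$ is bounded, $\overline{G(T|_{\mathcal S_0})}=G(\widetilde T)$. Second, $\widetilde T$ is everywhere defined, and an everywhere-defined operator has no proper operator extensions. So any closed operator extending $T|_{\mathcal S_0}$ has graph containing $G(\widetilde T)$ and therefore equals $\widetilde T$. This applies to $T$ itself if it is closed, and to $\overline T$ if $T$ is closable. Consequently:
\begin{itemize}
\item $\mathcal S_0$ is automatically a core.
\item For closed $T$, the hypotheses already force $\dom T=\mathfrak H$, so $T$ was bounded and everywhere defined from the start.
\end{itemize}

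Your adjoint route is circular as set up. Saying $\phi_n\in\dom T^*$ with $T^*\phi_n=\overline{\lambda_n}\phi_n$ means $\langle Tg,\phi_n\rangle=\lambda_n\langle g,\phi_n\rangle$ for all $g\in\dom T$. Since $\{\phi_n\}$ is an orthonormal basis, that is equivalent to $Tg=\widetilde Tg$, which is the conclusion. Recording it as ``the precise meaning of the hypotheses'' would amount to assuming what is to be proved. Delete the final section and stop at step 3.
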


This result is critical to the theory of integral transforms and bounded operators in general as they often appear in a concrete form that applies only to a dense subset of a Hilbert space. However this has a major caveat: the extension $\widetilde{T}$ is an abstract operator that is only understood via its action on the basis and extended linearly. Meaning, the extension may have lost characteristics from the original operator. Specifically, if the initial operator was an integral transform, it is possible, if not likely, that its extension may not be realized as an integral transform on the entire Hilbert space.

Specifically in the context of integral transforms, we typically define the integral transform on an $L^1$ space then restrict our attention to an intersection with an $L^2$ space, often with the same measure. Proving $L^2$ results about the integral transform on the intersection of the $L^1$ and $L^2$ space may be challenging, especially if there is no Riemann--Lebesgue lemma to fall back on. For instance, $L^2$ norm-preservation of an integral transform is sometimes only shown concretely on a known basis of eigenfunctions and then extended abstractly to the entire Hilbert space by density arguments \cite[Ch. 9]{Akhiezer}. This however does not guarantee that the integral transform defined on the intersection of the $L^1$ and $L^2$ spaces is norm-preserving as the abstract operator has washed away knowledge of the integral transform on the intersection of the $L^1$ and $L^2$ spaces in favor of an abstract operator. The next theorem rectifies this discrepancy.

\begin{theorem} \label{thm:integral_operator}
Let $\mathcal{S}$ be a dense subspace of $L^1(X,d\mu)\cap L^2(X, d\mu)$ in the $L^2$ norm, $\{\phi_n\}\subseteq \mathcal{S}$ be an orthonormal basis for $L^2(X, d\mu)$ satisfying $\overline{\phi_n(x)} = e^{i\theta_n} \phi_n(x)$ for some $\{\theta_n\}\subseteq \R$, and $T:\mathcal{S}\to L^2(X, d\mu)$ be an integral transform on $\mathcal{S}$. If $T \phi_n = \lambda_n \phi_n$, where $|\lambda_n| = 1$ for all $n$, and $T$ satisfies the Fubini property on $\mathcal{S}$, i.e. for all $f,g\in\mathcal{S}$,
\begin{equation}
\int_X Tf(x) g(x)\,d\mu(x) = \int_X f(x) Tg(x)\,d\mu(x), \label{eq:fubini}
\end{equation}

\noindent then $T$ is in fact a norm-preserving integral transform on $\mathcal{S}$ and extends to a unitary operator on $L^2(X,d\mu)$.
\end{theorem}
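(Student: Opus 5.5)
The plan is to show that $\langle Tf, Tf\rangle = \langle f,f\rangle$ for every $f\in\S$ by expanding against the basis $\{\phi_n\}$. Parseval's identity then reduces this to showing $|\langle Tf,\phi_n\rangle| = |\langle f,\phi_n\rangle|$ for each $n$. Once norm-preservation on the dense subspace $\S$ is established, the standard bounded linear extension gives an isometry $\widetilde{T}$ on $L^2(X,d\mu)$. Its range contains every $\phi_n$ (since $\phi_n = \lambda_n^{-1} T\phi_n$ and $|\lambda_n|=1$), so the range is dense. Because $\widetilde{T}$ is an isometry, its range is also closed, and hence $\widetilde{T}$ is surjective and therefore unitary.

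The key computation uses the Fubini property \eqref{eq:fubini} with $g = \phi_n\in\S$ together with the conjugation hypothesis $\overline{\phi_n} = e^{i\theta_n}\phi_n$. For $f\in\S$ we compute
\begin{equation*}
\langle Tf,\phi_n\rangle = \int_X Tf\,\overline{\phi_n}\,d\mu = e^{i\theta_n}\int_X Tf\,\phi_n\,d\mu = e^{i\theta_n}\int_X f\,T\phi_n\,d\mu = e^{i\theta_n}\lambda_n\int_X f\,\phi_n\,d\mu = \lambda_n\langle f,\phi_n\rangle.
\end{equation*}
The last step rewrites $e^{i\theta_n}\phi_n = \overline{\phi_n}$. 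Hence the coefficients of $Tf$ are those of $f$ multiplied by unimodular constants. Since $Tf\in L^2$ by hypothesis, Parseval gives $\|Tf\|^2 = \sum_n |\lambda_n|^2|\langle f,\phi_n\rangle|^2 = \|f\|^2$. Polarization then gives $\langle Tf,Tg\rangle = \langle f,g\rangle$ on $\S$. The point to stress is that this identity holds for the concrete integral transform on $\S$ itself, not merely for an abstract extension.

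The step needing the most care is justifying each integral above. The products $Tf\,\phi_n$ and $f\,T\phi_n$ are integrable because both factors lie in $L^2$ (and $T\phi_n=\lambda_n\phi_n$), and \eqref{eq:fubini} is applied exactly as hypothesized with $g=\phi_n\in\S$. The remaining point is that the extension $\widetilde{T}$ agrees with $T$ on $\S$ and is well defined independently of the approximating sequences. This follows because $T$ is linear and isometric on $\S$, so Cauchy sequences in $\S$ map to Cauchy sequences. Finally, I would note that the extension's coefficient action is $\langle \widetilde{T}h,\phi_n\rangle=\lambda_n\langle h,\phi_n\rangle$ for all $h$, by continuity. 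This gives an explicit description of the unitary, $\widetilde{T}^{-1}h=\sum_n\overline{\lambda_n}\langle h,\phi_n\rangle\phi_n$, which confirms surjectivity directly.
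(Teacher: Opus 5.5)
Your proposal is correct and follows the paper's proof essentially step for step. Both arguments obtain the coefficient identity $\langle Tf,\phi_n\rangle=\lambda_n\langle f,\phi_n\rangle$ from the conjugation hypothesis and the Fubini property with $g=\phi_n$, apply Parseval to get $\|Tf\|=\|f\|$ on $\S$, extend by density, and deduce surjectivity from the density of $\operatorname{span}\{\phi_n\}$ in the range. Your added observation that an isometry has closed range makes explicit a step the paper skips when it passes from ``the closure of the range is all of $L^2$'' to ``surjective.''
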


\begin{proof}
Let $f\in \mathcal{S}$, then $Tf\in L^2(X, d\mu)$ by assumption and so $\langle Tf, \phi_n\rangle$ exists and is finite for all $n$. Making use of the assumptions that the eigenfunctions $\phi_n$ satisfy $\overline{\phi_n(x)} = e^{i\theta_n} \phi_n(x)$ and $T$ satisfies the Fubini property, we have
\begin{align*}
\langle Tf, \phi_n\rangle &= \int_X Tf(x) \overline{\phi_n(x)}\,d\mu(x) \\
&= e^{i\theta_n} \int_X Tf(x) \phi_n(x)\,d\mu(x) \\
&= e^{i\theta_n} \int_X f(x) T\phi_n(x)\,d\mu(x) \\
&= \lambda_n e^{i\theta_n} \int_X f(x) \phi_n(x)\,d\mu(x) \\
&= \lambda_n \langle f, \phi_n\rangle
\end{align*}

\noindent Because $|\lambda_n| = 1$, $\|Tf\|^2 = \sum_n |\langle Tf, \phi_n\rangle|^2 = \sum_n |\lambda_n \langle f, \phi_n\rangle|^2 = \sum_n |\langle f, \phi_n\rangle|^2 = \|f\|^2$ and the norm-preservation of $T$ as an integral transform on $\operatorname{span}\{\phi_n\}$ can be extended so $T$ that is norm-preserving on all of $\mathcal{S}$ as an integral transform.

As $\mathcal{S}$ is dense in $L^2(X,d\mu)$ and $T$ is norm-preserving on $\mathcal{S}$, $T$ can be extended to a norm-preserving map on all of $L^2(X,d\mu)$. Moreover, its extension is in fact surjective by virtue of $\{\phi_n\}$ being a basis of eigenfunctions for $T$ and thus $\operatorname{span}\{T\phi_n\} = \operatorname{span}\{\phi_n\}$ is dense in $L^2(X,d\mu)$ so that the closure of the range of the extension of $T$ is all of $L^2(X,d\mu)$. As the extension of $T$ is norm-preserving (and thereby injective) and surjective, the extension of $T$ must be unitary.
\end{proof}

This theorem has a direct corollary which is useful in the study of integral transforms. The proof is simple and omitted.

\begin{corollary}
Let the conditions for \Cref{thm:integral_operator} hold with the added constraints that $T:\mathcal{S}\to\mathcal{S}$ and $T \phi_n = \pm \phi_n$ so that $T^2 \phi_n = \phi_n$, then $T^2 = I$ on $\mathcal{S}$.
\end{corollary}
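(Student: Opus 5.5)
The idea is to verify $T^2 = I$ on the orthonormal basis, then pass to all of $\mathcal{S}$ by density and continuity. Since $T$ maps $\mathcal{S}$ into $\mathcal{S}$, the composition $T^2 = T\circ T$ makes sense on $\mathcal{S}$ as an honest iterated integral transform, not merely as an abstract extension. We also have $T(T\phi_n) = T(\pm\phi_n) = \pm T\phi_n = \phi_n$ for every $n$. So $T^2$ agrees with the identity on $\operatorname{span}\{\phi_n\}\subseteq\mathcal{S}$, by linearity of the integral transform.

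Next we use the conclusion of \Cref{thm:integral_operator}: $T$ is norm-preserving on $\mathcal{S}$ in the $L^2(X,d\mu)$ norm. Because $T(\mathcal{S})\subseteq\mathcal{S}$, we may apply the theorem twice and get $\|T^2 f\| = \|Tf\| = \|f\|$ for all $f\in\mathcal{S}$. Hence $T^2 - I$ is a linear map on $\mathcal{S}$ that is bounded in the $L^2$ norm, with $\|(T^2-I)f\|\le 2\|f\|$.

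Now fix $f\in\mathcal{S}$ and write $f_N = \sum_{n\le N}\langle f,\phi_n\rangle\phi_n$. Then $f_N\in\mathcal{S}$ and $f_N\to f$ in $L^2$ because $\{\phi_n\}$ is an orthonormal basis. It follows that
$$\|T^2 f - f\| = \|(T^2-I)(f-f_N)\| \le 2\|f-f_N\|\to 0.$$
Thus $T^2 f = f$ as elements of $L^2(X,d\mu)$, i.e. almost everywhere.

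The main subtlety is this last step, because $T$ on $\mathcal{S}$ is only known to be norm-preserving, not given a priori as the restriction of a bounded operator. That is why we route the argument through the $L^2$ bound on $T^2-I$ restricted to $\mathcal{S}$ rather than through the abstract unitary extension. If one prefers, the same conclusion also follows from the proof of \Cref{thm:integral_operator}, which gives $\langle T^2 f,\phi_n\rangle = \lambda_n^2\langle f,\phi_n\rangle = \langle f,\phi_n\rangle$ for every $n$ (valid since $Tf\in\mathcal{S}$). Completeness of $\{\phi_n\}$ then gives $T^2 f = f$ directly.
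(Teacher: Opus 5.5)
Your proof is correct: $T^2$ is well defined on $\mathcal{S}$ because $T(\mathcal{S})\subseteq\mathcal{S}$, it is $L^2$-norm-preserving there by applying the theorem twice, and it agrees with the identity on $\operatorname{span}\{\phi_n\}$. Density of the partial sums $f_N\in\mathcal{S}$, or equivalently your coefficient identity $\langle T^2 f,\phi_n\rangle=\lambda_n^2\langle f,\phi_n\rangle=\langle f,\phi_n\rangle$, then forces $T^2 f=f$. The paper omits this proof as simple, and your argument is the natural agreement-on-a-basis-plus-continuity reasoning its statement calls for.
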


Such operators $T$ satisfying the Fubini property are complex symmetric operators \cite{Garcia_2014}, where the conjugation is the usual complex conjugate, which have a large and growing literature in recent years.

\begin{remark}
We did not explicitly need that $T$ was defined as an integral transform in the above statements. In fact, the arguments show that any concretely defined operator satisfying the conditions for the theorem can be extended as that concrete operator beyond eigenfunctions to the larger dense subspace. However, it is of interest to know when unitary operators arising from integral transforms have the norm-preservation property as an integral transform rather than abstractly extended operator, and the Fubini property is often satisfied by integral transforms when restricted to appropriate subspaces. Moreover, the condition $\overline{\phi_n(x)} = e^{i\theta_n} \phi_n(x)$ is not very onerous as this is satisfied for any purely real or imaginary eigenfunctions but provides for other cases. As such, the above results hold for concretely-defined operators $T$ satisfying the Fubini property.
\end{remark}

For integral transforms that play nicely with dilations, Akhiezer's trick can be used to develop an infinite family of eigenfunctions if one knows just one eigenfunction. However, it can be difficult to determine that this is a basis. Indeed, Akhiezer appeals to complex analysis and effectively the Fourier transform to argue that a function that is orthogonal to all of the eigenfunctions of the Fourier--Bessel transform must be $0$. Instead, one can use the dilation property and the knowledge of only one eigenfunction to generate many more. Prior to that, we need a technical lemma about uncountable linearly independent sets of elements in separable Hilbert spaces.

\begin{lemma} \label{lem:countable_dense_set}
Let $\mathcal{H}$ be a separable Hilbert space and $\Lambda$ be an uncountable set. If $\{f_{\lambda}:\lambda\in\Lambda\}$ is a linearly independent set and $\operatorname{span}\{f_{\lambda}:\lambda\in\Lambda\}$ is dense in $\mathcal{H}$, then there is a countable $\Lambda' \subseteq \Lambda$ such that $\operatorname{span}\{f_{\lambda}:\lambda\in\Lambda'\}$ is dense in $\mathcal{H}$, i.e. a countable spanning set can be extracted.
\end{lemma}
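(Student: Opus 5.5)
The plan is to use separability to fix a countable dense set $\{h_k\}_{k\in\N}\subseteq\mathcal{H}$, and then approximate each $h_k$ arbitrarily well by finite linear combinations of the $f_\lambda$. Only countably many indices get used in this process, so we collect them into $\Lambda'$.

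Concretely, fix $k,m\in\N$. Since $\operatorname{span}\{f_\lambda:\lambda\in\Lambda\}$ is dense in $\mathcal{H}$, there is a finite set $F_{k,m}\subseteq\Lambda$ and scalars $c_\lambda$ ($\lambda\in F_{k,m}$) such that
\begin{equation*}
\Big\|h_k-\sum_{\lambda\in F_{k,m}} c_\lambda f_\lambda\Big\| < \frac{1}{m}.
\end{equation*}
Set $\Lambda'=\bigcup_{k,m\in\N}F_{k,m}$. As a countable union of finite sets, $\Lambda'$ is countable, and $\Lambda'\subseteq\Lambda$. (The choice of the finite sets uses countable choice, which is harmless here.)

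It remains to show that $\operatorname{span}\{f_\lambda:\lambda\in\Lambda'\}$ is dense. Let $h\in\mathcal{H}$ and $\varepsilon>0$. Pick $k$ with $\|h-h_k\|<\varepsilon/2$ and $m$ with $1/m<\varepsilon/2$. The combination $\sum_{\lambda\in F_{k,m}}c_\lambda f_\lambda$ lies in the span over $\Lambda'$, and by the triangle inequality it is within $\varepsilon$ of $h$. Hence the closure of this span is all of $\mathcal{H}$.

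Linear independence and the uncountability of $\Lambda$ play no role beyond making the statement nontrivial. If one also wanted $\Lambda'$ infinite, or the extracted family to remain linearly independent, this is automatic: a subset of a linearly independent set is linearly independent. The only point needing care is the second step, namely that a countable union of finite index sets is countable. This is where separability enters essentially, so I do not expect a real obstacle.
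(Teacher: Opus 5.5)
Your proof is correct and is essentially the paper's argument: the paper uses a countable orthonormal basis $\{\phi_k\}$ where you use a dense sequence $\{h_k\}$, which forces it to bound the coefficients by $|c_m|\le\|f\|$ and to use weighted tolerances $\varepsilon/(2^{k+1}\|f\|)$, and it likewise collects the finitely many indices used for each $k$. One point in your favour is that you take the union over all tolerances $1/m$, so $\Lambda'$ does not depend on $h$ or $\varepsilon$; the paper fixes $\varepsilon$ and $f$ before choosing its approximants of $\phi_k$, so its write-up tacitly needs exactly this extra countable union to produce a single family.
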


\begin{proof}
Let $\varepsilon > 0$ and $f\in \mathcal{H}$. As $\mathcal{H}$ is separable, it has a countable orthonormal basis $\{\phi_n:n\in\mathbb{N}\}$. Moreover, since $\operatorname{span}\{f_{\lambda}:\lambda\in\Lambda\}$ is dense in $\mathcal{H}$, there exist $l_k\in\mathbb{N}$ and $b_{k_n}\in\C$ such that
\begin{equation*}
\bigg\|\phi_k - \sum_{n=1}^{l_k} b_{k_n} f_{k_n}\bigg\| < \frac{\varepsilon}{2^{k+1} \|f\|}.
\end{equation*}

\noindent Because the $\{\phi_n\}$ form an orthonormal basis for $\mathcal{H}$, there exists $c_n$ and $M$ such that
\begin{equation*}
\bigg\|f - \sum_{m=1}^M c_m \phi_m\bigg\| < \frac{\varepsilon}{2}.
\end{equation*}

\noindent From the above, we have that
\begin{align*}
\bigg\|f - \sum_{m=1}^M c_m \sum_{n=1}^{l_m} b_{m_n} f_{m_n} \bigg\| & \le \bigg\|f - \sum_{m=1}^M c_m \phi_m\bigg\| + \bigg\|\sum_{m=1}^M c_m \phi_m - \sum_{m=1}^M c_m \sum_{n=1}^{l_m} b_{m_n} f_{m_n} \bigg\| \\
& \le \frac{\varepsilon}{2} + \sum_{m=1}^M |c_m| \bigg\|\phi_m - \sum_{n=1}^{l_m} b_{m_n} f_{m_n}\bigg\| \\
& < \frac{\varepsilon}{2} + \sum_{m=1}^M \|f\| \frac{\varepsilon}{2^{k+1}\|f\|} \\
&= \varepsilon
\end{align*}

\noindent so that the set $\{f_{k_n}\}$ is a linearly independent set with dense span in $\mathcal{H}$ and is moreover countable as $\{f_{k_n}\}$ is finite for a fixed $k$.
\end{proof}

\begin{lemma}
Let $T:L^2(\X,d\mu)\to L^2(\X,d\mu)$, where $\D_{\alpha} \X \subseteq \X$ for all $\alpha > 0$, such that $T \D_{\alpha} = \D_{\alpha}^{-1}T$ and $f$ be an eigenfunction of $T$. If there exists an infinite collection $\{\alpha_j\}$ such that $\operatorname{span}\{\D_{\alpha_j} f\}$ is dense in $L^2(\X,d\mu)$, then $T$ admits a basis of eigenfunctions.
\end{lemma}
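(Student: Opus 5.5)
The plan is to build eigenfunctions of $T$ out of the dilates of the single eigenfunction $f$ by symmetrizing them. Since $T\D_{\alpha} = \D_{\alpha}^{-1}T$ and $Tf = \lambda f$, we have
$$ T\D_{\alpha}f = \D_{\alpha^{-1}}Tf = \lambda \D_{\alpha^{-1}} f. $$
Thus $T$ swaps the two dilates $\D_{\alpha}f$ and $\D_{\alpha^{-1}}f$, up to the factor $\lambda$. Applying the same identity to $\D_{\alpha^{-1}}f$ gives $T\D_{\alpha^{-1}}f = \lambda\D_{\alpha}f$. Setting
$$ g_{\alpha}^{\pm} = \D_{\alpha}f \pm \D_{\alpha^{-1}}f, $$
we get $Tg_{\alpha}^{\pm} = \pm\lambda g_{\alpha}^{\pm}$, so each $g_{\alpha}^{\pm}$ (when nonzero) is an eigenfunction of $T$. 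Conversely,
$$ \D_{\alpha}f = \tfrac12\big(g_{\alpha}^+ + g_{\alpha}^-\big), $$
so every dilate $\D_{\alpha_j}f$ lies in $\operatorname{span}\{g_{\alpha_j}^+, g_{\alpha_j}^-\}$. Hence
$$ \operatorname{span}\{g_{\alpha_j}^{\pm}\} \supseteq \operatorname{span}\{\D_{\alpha_j}f\}, $$
which is dense by hypothesis. This gives a countable family of eigenfunctions of $T$ with dense span. Any zero vectors among the $g_{\alpha_j}^{\pm}$ are discarded; this happens, for instance, when $\alpha_j = 1$. If the given collection were uncountable, \Cref{lem:countable_dense_set} would first let us pass to a countable subcollection, although the statement already supplies a sequence $\{\alpha_j\}$.

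Next we turn this dense family into a basis. The eigenfunctions split into two classes, those with eigenvalue $\lambda$ and those with eigenvalue $-\lambda$. Run Gram--Schmidt separately within each class, processing $g_{\alpha_1}^{+}, g_{\alpha_2}^{+}, \dots$ and discarding any vector that is dependent on the earlier ones. Gram--Schmidt only forms linear combinations within a class, so each resulting vector is still an eigenfunction with the same eigenvalue $\pm\lambda$. The span of the output equals the span of the input, so the union of the two orthonormal families has dense span and is therefore an orthonormal basis of $L^2(\X,d\mu)$ consisting of eigenfunctions.

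If $\lambda \neq 0$, the two classes are orthogonal to each other whenever $T$ is unitary or normal, and the result is a single orthonormal basis. Without that assumption, the union of the two orthonormal families is a linearly independent set with dense span: it is still a (Schauder/Hamel-sense) spanning basis of eigenfunctions, though not necessarily orthonormal. In the intended application, $T = \mathcal{F}_\nu$, \Cref{thm:integral_operator} supplies unitarity, so I would state the result in that setting. If $\lambda = 0$, then $T\D_\alpha f = 0$ for every $\alpha$, so $T$ vanishes on a dense set. The dilates then already form a dense family of eigenfunctions, and Gram--Schmidt on them finishes the proof.

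I expect the main obstacle to be exactly this basis subtlety: making precise in what sense ``basis'' is meant and ensuring orthogonality between the $+\lambda$ and $-\lambda$ eigenspaces. My first attempt would be to invoke normality of $T$, which holds in the application. Failing that, I would settle for a linearly independent eigenfunction family with dense span, which is what \Cref{thm:integral_operator} actually consumes after orthonormalization.
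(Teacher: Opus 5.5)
Your proof is correct and follows the paper's argument: symmetrizing the dilates into $\D_{\alpha_j}f\pm\D_{\alpha_j^{-1}}f$ gives eigenfunctions with eigenvalues $\pm\lambda$, and their span contains the dense span of the $\D_{\alpha_j}f$. Your refinements only sharpen the paper's loose use of ``basis'': you discard zero vectors such as $g_1^-$, you write $\supseteq$ where the paper says ``exactly'', you use Gram--Schmidt within each eigenvalue class instead of the paper's appeal to \Cref{lem:countable_dense_set}, and you flag that orthonormality across the $\pm\lambda$ classes needs an extra hypothesis such as normality (in the application, for real-valued eigenfunctions the Fubini property \eqref{eq:fubini} alone gives this orthogonality, so you need not rely on unitarity).
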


\begin{proof}
Suppose $Tf = \lambda f$, then $T(\D_{\alpha_j}f + \D_{\alpha_j^{-1}}f) = \lambda (\D_{\alpha_j}f + \D_{\alpha_j^{-1}}f)$ and likewise $T(\D_{\alpha_j}f - \D_{\alpha_j^{-1}}f) = -\lambda (\D_{\alpha_j}f - \D_{\alpha_j^{-1}}f)$ so that both $\D_{\alpha_j}f + \D_{\alpha_j^{-1}}f$ and $\D_{\alpha_j}f - \D_{\alpha_j^{-1}}f$ are eigenfunctions. As there are dilations $\{\D_{\alpha_j}f\}$ of $f$ that are linearly independent, we can pick eigenfunctions $\D_{\alpha_j}f \pm \D_{\alpha_j^{-1}}f$ that are linearly independent by \Cref{lem:countable_dense_set}. The set of functions $\D_{\alpha_j}f \pm \D_{\alpha_j^{-1}}f$ are also a basis for $L^2(\X,d\mu)$ as their span is exactly $\operatorname{span}\{\D_{\alpha_j} f\}$. We also get for free that the eigenvalues of $T$ are $\pm \lambda$.
\end{proof}

\begin{remark}
If $\operatorname{span}\{\D_{\alpha} f\}$ is not dense and instead forms a subspace with a nonzero complement in $L^2(\X,d\mu)$, we can repeat this process in the orthogonal complement and proceed inductively.
\end{remark}

To employ these results, we must make a judicious choice of space on which to define our integral transform. In standard Fourier theory, two such choices are typically made: on $L^1(\R,dx)\cap L^2(\R,dx)$ and on the Schwartz space. The Fourier--Bessel transform is also amenable to both approaches with some distinct changes from the usual Fourier setting.

\section{Eigenfunctions of the Fourier--Bessel Transform} \label{sec:6}

Central to the results in Section 5 is the existence of eigenfunctions of the integral transform and that the integral transform obeys the Fubini property \eqref{eq:fubini}. A well-known result is that the Gaussian is an eigenfunction of the Fourier--Bessel transform, exactly in accordance with the Fourier transform.

\begin{lemma}
If $-1 < \nu$, then the Gaussian $e^{-x^2/2}$ is in the domain of $\mathcal{F}_{\nu}$ and is an eigenfunction of $\mathcal{F}_{\nu}$ with eigenvalue $1$.
\end{lemma}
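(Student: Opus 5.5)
Two things need to be shown: that $e^{-x^2/2}$ lies in $\dom(\mathcal{F}_{\nu})$ in both regimes of $\nu$, and that $\mathcal{F}_{\nu}e^{-x^2/2}=e^{-y^2/2}$.

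\emph{Domain.} For $-\frac{1}{2}<\nu$ we need $\int_{\R^+} e^{-x^2/2}x^{2\nu+1}\,dx<\infty$. The substitution $u=x^2/2$ turns this into $2^{\nu}\Gamma(\nu+1)$, which is finite precisely because $\nu>-1$. For $-1<\nu<-\frac{1}{2}$ there are two pieces. On $(0,R)$ the same computation bounds the integral against $x^{2\nu+1}\,dx$, and $x^{2\nu+1}$ is integrable at $0$ since $2\nu+1>-1$. On $(R,\infty)$ the integral $\int_R^\infty e^{-x^2/2}x^{\nu+1/2}\,dx$ is finite because Gaussian decay beats any power. By \Cref{thm:domain_of_fb}, the transform is then defined pointwise as an absolutely convergent integral for every $y\in\R^+$.

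\emph{Eigenfunction identity.} The plan is to integrate the power series of $\varphi_{\nu}$ term by term. From the series for $J_\nu$,
\[
\varphi_{\nu}(xy)=\sum_{n\ge 0}\frac{(-1)^n (xy)^{2n}}{2^{2n+\nu}\,n!\,\Gamma(n+\nu+1)}.
\]
Insert this into $\int_{\R^+}\varphi_{\nu}(xy)e^{-x^2/2}x^{2\nu+1}\,dx$ and use
\[
\int_{\R^+} x^{2n+2\nu+1}e^{-x^2/2}\,dx=2^{n+\nu}\Gamma(n+\nu+1),
\]
again by $u=x^2/2$. The $\Gamma(n+\nu+1)$ factors cancel and the powers of $2$ combine, so the $n$th term becomes $\frac{(-1)^n}{n!}\bigl(\frac{y^2}{2}\bigr)^n$. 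Summing gives $e^{-y^2/2}$, so the eigenvalue is $1$.

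The main obstacle is justifying the interchange of sum and integral, particularly for $-1<\nu<-\frac12$, where $\varphi_{\nu}$ is unbounded and the bounds of \Cref{lem:asymptotics_of_jnu} give no domination uniform in $n$. Instead I would apply Tonelli to the absolute values. The sum of the absolute values of the terms is
\[
\sum_n \frac{(xy)^{2n}}{2^{2n+\nu}n!\Gamma(n+\nu+1)}\,e^{-x^2/2}x^{2\nu+1},
\]
and integrating term by term gives $\sum_n \frac{1}{n!}(y^2/2)^n=e^{y^2/2}<\infty$. Hence the double series--integral converges absolutely and Fubini--Tonelli permits the exchange for every $\nu>-1$, with no case split needed. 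The only remaining care point is that $\Gamma(n+\nu+1)>0$ for all $n\ge0$ when $\nu>-1$, so every coefficient in the absolute series is positive and finite.
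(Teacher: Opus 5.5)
Your proposal is correct and follows the paper's route. The paper checks domain membership by directly computing the weighted $L^1$ norm, and proves the eigenfunction identity by inserting the power series of $J_{\nu}$ and interchanging sum and integral via Fubini. Your Tonelli bound $\sum_n \frac{1}{n!}(y^2/2)^n = e^{y^2/2}$ supplies the justification of that interchange for all $\nu>-1$, which the paper leaves implicit; it only cites Gradshteyn--Ryzhik for the resulting integral.
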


That $e^{-x^2/2}$ is in the domain of $\mathcal{F}_{\nu}$ can be checked by direct computation of its $L^1$ norm. Furthermore, that it is an eigenfunction can be shown via Fubini's theorem and the series representation for $J_{\nu}$. This integral is well-known in the literature, see \cite[6.629.4]{gradshteyn2007}. To apply \Cref{thm:integral_operator} and \Cref{lem:countable_dense_set} to conclude that the Fourier--Bessel transform as an integral transform is norm-preserving, we need that the dilations of the Gaussian are linearly dense in $L^2(\R^+, x^{2\nu+1}\,dx)$. We have a more general result by Hamburger \cite{Hamburger}, see also \cite[Ch. 9]{Akhiezer}.

\begin{lemma} \label{lem:poly_exp_density}
Let $-1 < \nu$ and $f\in L^2(\R^+, x^{2\nu+1}\,dx)$. If
\begin{equation}
\int_{\R^+} f(x) x^{2n} e^{-x^2/2} x^{2\nu+1}\,dx = 0
\end{equation}

\noindent for all $n = 0, 1, 2, \ldots$, then $f\equiv 0$ a.e., i.e. if $f$ is orthogonal to $x^{2n} e^{-x^2/2}$ for all $n$, then $f\equiv 0$.
\end{lemma}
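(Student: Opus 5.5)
Substituting $t = x^2/2$ turns the claim into the completeness of the Laguerre polynomials. Then $x^{2\nu+1}\,dx = 2^{\nu}t^{\nu}\,dt$ and $x^{2n} = (2t)^n$. Define $g(t) = f(\sqrt{2t})$. The hypothesis becomes
\begin{equation*}
\int_{\R^+} g(t)\, t^{n} e^{-t} t^{\nu}\,dt = 0, \qquad n = 0,1,2,\ldots,
\end{equation*}
and $f\in L^2(\R^+,x^{2\nu+1}\,dx)$ is equivalent to $g\in L^2(\R^+, t^{\nu}\,dt)$. Set $h(t) = g(t)e^{-t/2}$. Then $h\in L^2(\R^+, t^{\nu}\,dt)$, and the condition says that $h$ is orthogonal to $t^n e^{-t/2}$ for every $n$.

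The plan is a Fourier--Laplace argument, the route Akhiezer takes. Define
\begin{equation*}
F(z) = \int_{\R^+} g(t) e^{-t} e^{-zt} t^{\nu}\,dt.
\end{equation*}

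\textbf{Step 1: $F$ is analytic on $\operatorname{Re} z > -\tfrac12$.} Write the integrand as $h(t)\cdot e^{-t/2}e^{-zt}$. By Cauchy--Schwarz in $L^2(t^{\nu}dt)$ it suffices that $e^{-(1/2+\operatorname{Re}z)t}\in L^2(t^{\nu}dt)$. This holds when $\operatorname{Re} z>-1/2$ because $\nu>-1$: the factor $t^{\nu}$ is integrable at $0$ and the exponential gives decay at infinity. Analyticity then follows by a standard Morera or dominated-convergence argument.

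\textbf{Step 2: $F\equiv 0$.} The derivatives at $0$ are $F^{(n)}(0) = (-1)^n\int g\, t^n e^{-t}t^{\nu}\,dt = 0$ for all $n$. The main point is that $F$ equals its Taylor series on the disk $|z|<1/2$, since this disk lies inside the half-plane of analyticity. Concretely, expand $e^{-zt}=\sum (-zt)^n/n!$. Interchanging sum and integral is justified because $\sum |z|^n t^n/n! = e^{|z|t}$ and $|h|e^{-t/2}e^{|z|t}$ is integrable against $t^{\nu}\,dt$ for $|z|<1/2$. So $F$ vanishes on that disk, and by the identity theorem it vanishes on the whole half-plane.

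\textbf{Step 3: $g=0$.} Restrict to the imaginary axis $z=-i\xi$. This gives $\int_{\R^+} g(t)e^{-t}t^{\nu}e^{i\xi t}\,dt = 0$ for all real $\xi$. The function $G(t)=g(t)e^{-t}t^{\nu}\chi_{\R^+}(t)$ lies in $L^1(\R,dt)$, by Cauchy--Schwarz applied to $h$ and $e^{-t/2}t^{\nu/2}$ with the weight handled. Its Fourier transform vanishes identically, so by injectivity of the Fourier transform on $L^1$ we get $G=0$ a.e. Hence $g=0$ a.e., and therefore $f=0$ a.e.

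The only delicate point is the integrability bookkeeping with the weight $t^{\nu}$ when $\nu<0$, which matters in the range $-1<\nu<-\tfrac12$ that the paper emphasizes. All these estimates reduce to $\int_0^1 t^{\nu}\,dt<\infty$, which is exactly the condition $\nu>-1$. This is the one place where the hypothesis on $\nu$ is used, and no distinction between $\nu<-\tfrac12$ and $\nu>-\tfrac12$ arises.
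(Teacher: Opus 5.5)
Your proof is correct: the substitution $t=x^2/2$, the Cauchy--Schwarz bounds in $L^2(t^{\nu}\,dt)$ (which use only $\nu>-1$), the vanishing of $F$ on $|z|<\tfrac12$ followed by the identity theorem on $\operatorname{Re} z>-\tfrac12$, and injectivity of the Fourier transform on $L^1$ all go through. The paper gives no proof of its own; it cites Hamburger's more general moment-problem result and Akhiezer, and Akhiezer's complex-analytic, Fourier-transform argument is essentially the route you take.
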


This lemma guarantees that the dilations of the Gaussian are linearly dense in $L^2(\R^+,x^{2\nu+1}\,dx)$. By \Cref{lem:countable_dense_set}, there is a basis of dilations of the Gaussians for $L^2(\R^+,x^{2\nu+1}\,dx)$ with eigenvalues $\pm 1$. This lemma can be extended further for functions $f$ that are not in $L^2(\R^+, x^{2\nu+1}\,dx)$ themselves, but the above theorem statement suffices for our purposes.

\Cref{lem:countable_dense_set} is non-constructive by nature and so we only know that there is \emph{some} collection of dilations of the Gaussian that form a basis. In what follows, we develop concrete orthogonal eigenfunctions to the Fourier--Bessel transform that form a basis for $L^2(\R^+,x^{2\nu+1}\,dx)$. Note that non-orthogonal eigenfunctions can be established by way of Akhiezer's trick \cite[Ch. 9]{Akhiezer} which explicitly relies on the dilation property of the Fourier--Bessel transform.

\begin{definition} \label{def:phi_n}
Let $-1 < \nu$ and $n\in \N_0$. Define $\phi_n$ by
\begin{equation}
\phi_n(x) = \frac{(-1)^n}{\sqrt{2^{4n-1} n! \Gamma(n+\nu+1)}} e^{x^2/2} \Delta_{\nu}^n e^{-x^2}.
\end{equation}
\end{definition}

\begin{lemma} \label{lem:phin_recurrence}
The $\phi_n$ satisfy the following recurrence relation:
\begin{equation}
\bigg(-\Delta_{\nu} - x^2 + 2x\frac{d}{dx} + 2\nu+2\bigg)\phi_n(x) = 4 \sqrt{n+1} \sqrt{n+\nu+1} \phi_{n+1}(x).
\end{equation}
\end{lemma}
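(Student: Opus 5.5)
The plan is to reduce the recurrence to one conjugation identity, namely that the operator on the left-hand side is $-\Delta_{\nu}$ conjugated by multiplication by $e^{x^2/2}$. Once that is in place, the statement follows from \Cref{def:phi_n} together with a comparison of normalizing constants.

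Write $A = -\Delta_{\nu} - x^2 + 2x\frac{d}{dx} + 2\nu+2$, and set $g_n = \Delta_{\nu}^n e^{-x^2}$. Let $c_n = \frac{(-1)^n}{\sqrt{2^{4n-1} n!\,\Gamma(n+\nu+1)}}$, so that $\phi_n = c_n e^{x^2/2} g_n$.

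The key claim is that for any $C^2$ function $g$ on $\R^+$,
\begin{equation*}
A\big(e^{x^2/2} g\big) = -e^{x^2/2}\,\Delta_{\nu} g .
\end{equation*}
To prove it, I will use $(e^{x^2/2})' = x e^{x^2/2}$ and $(e^{x^2/2})'' = (1+x^2)e^{x^2/2}$ and expand directly. This gives
\begin{equation*}
\Delta_{\nu}\big(e^{x^2/2}g\big) = e^{x^2/2}\Big(\Delta_{\nu} g + 2x g' + (x^2+2\nu+2) g\Big),
\end{equation*}
where the constant $2\nu+2$ is $1$ from the second derivative plus $2\nu+1$ from the first-order term $\frac{2\nu+1}{x}\cdot x g$. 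The remaining pieces of $A$ contribute
\begin{equation*}
-x^2 e^{x^2/2}g,\qquad 2x\frac{d}{dx}\big(e^{x^2/2}g\big) = e^{x^2/2}\big(2xg' + 2x^2 g\big),\qquad (2\nu+2)e^{x^2/2}g.
\end{equation*}
Adding everything, the $g'$ terms, the $x^2 g$ terms and the constant terms all cancel, leaving $-e^{x^2/2}\Delta_{\nu} g$.

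Applying this with $g = g_n$ gives $A\phi_n = -c_n e^{x^2/2} g_{n+1} = -\frac{c_n}{c_{n+1}}\phi_{n+1}$. The constant ratio is
\begin{equation*}
\frac{c_n}{c_{n+1}} = -\sqrt{\frac{2^{4n+3}(n+1)!\,\Gamma(n+\nu+2)}{2^{4n-1}n!\,\Gamma(n+\nu+1)}} = -4\sqrt{n+1}\sqrt{n+\nu+1},
\end{equation*}
using $\Gamma(n+\nu+2) = (n+\nu+1)\Gamma(n+\nu+1)$ and the fact that $n+\nu+1>0$ because $\nu>-1$. Substituting this ratio yields exactly the stated recurrence.

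The only real obstacle is the bookkeeping in the conjugation step. In particular, the first-order term $\frac{2\nu+1}{x}\frac{d}{dx}$ acting on $x e^{x^2/2}$ must be tracked carefully, since that is the term that produces the $2\nu+2$ constant needing to cancel. The rest of the argument is sign and constant tracking. Smoothness is not an issue: $g_n$ is a polynomial in $x^2$ times $e^{-x^2}$, since $\Delta_{\nu}$ maps even polynomials times $e^{-x^2}$ to functions of the same form, so every derivative involved exists on $\R^+$.
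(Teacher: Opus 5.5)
Your proof is correct and follows essentially the same route as the paper, which applies the Leibniz rule $\Delta_{\nu}(fg) = (\Delta_{\nu}f)g + 2f'g' + f\,\Delta_{\nu}g$ to $e^{x^2/2}\cdot\Delta_{\nu}^n e^{-x^2}$ (this is your conjugation identity) and then compares normalizing constants. Your intermediate formula $A\phi_n = -c_n e^{x^2/2}\Delta_{\nu}^{n+1}e^{-x^2}$ has the correct sign; the paper's displayed intermediate step drops this minus sign, although its final recurrence agrees with yours.
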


\begin{proof}
For convenience, let $\mu_n = \frac{(-1)^n}{\sqrt{2^{4n-1} n! \Gamma(n+\nu+1)}}$ and $R_{\nu} = -\Delta_{\nu} - x^2 + 2x\frac{d}{dx} + 2\nu+2$. Note that there is a product (or Leibniz) rule for $\Delta_{\nu}$:
\begin{equation}
\Delta_{\nu} (fg) = (\Delta_{\nu} f)g + 2f'g' + f(\Delta_{\nu}g).
\end{equation}

\noindent Applying $R_{\nu}$ to $\phi_n$ per its definition in \Cref{def:phi_n} in conjunction with the product rule for $\Delta_{\nu}$ gives
\begin{equation}
R_{\nu} \phi_n = \mu_n e^{x^2/2} \Delta_{\nu}^{n+1} e^{-x^2} = 4 \sqrt{n+1} \sqrt{n+\nu+1} \phi_{n+1}
\end{equation}

\noindent as desired.
\end{proof}

As per \Cref{thm:integral_operator}, we need that the $\phi_n$ form a basis for $L^2(\R^+, x^{2\nu+1}\,dx)$. This is not immediately guaranteed from the definition. For instance, if we defined $\phi_n$ instead by
\begin{equation*}
\phi_n(x) = \frac{(-1)^n}{\sqrt{2^{4n-1} n! \Gamma(n+\nu+1)}} e^{x^2/2} \Delta_{\nu}^{n+1} e^{-x^2},
\end{equation*}

\noindent where we replaced $\Delta_{\nu}^n$ with $\Delta_{\nu}^{n+1}$, this would no longer be true as the Gaussian $e^{-x^2/2}$ which is in $L^2(\R^+, x^{2\nu+1}\,dx)$ would be orthogonal to all of the $\phi_n$ and thus not in their span.

\begin{theorem}
Let $-1 < \nu$, $n\in \N_0$, and $\phi_n$ be defined as in \Cref{def:phi_n}, then $\phi_n \in \dom(\mathcal{F}_{\nu})$, $\phi_n\in L^2(\R^+,x^{2\nu+1}\,dx)$, the $\phi_n$ are an orthonormal basis for $L^2(\R^+, x^{2\nu+1}\,dx)$, and $\mathcal{F}_{\nu} \phi_n = (-1)^n \phi_n$.
\end{theorem}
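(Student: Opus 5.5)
The plan is to first identify $\phi_n$ concretely. Using the product rule for $\Delta_{\nu}$ from the proof of \Cref{lem:phin_recurrence} and $\Delta_{\nu} e^{-x^2} = (4x^2 - 4\nu - 4)e^{-x^2}$, an induction on $n$ shows $\Delta_{\nu}^n e^{-x^2} = p_n(x^2) e^{-x^2}$ with $p_n$ a polynomial of exact degree $n$ and leading coefficient $4^n$. Hence $\phi_n(x) = \mu_n p_n(x^2) e^{-x^2/2}$. One should recognize $p_n(x^2)$ as a multiple of the Laguerre polynomial $L_n^{(\nu)}(x^2)$. This representation gives $\phi_n \in \dom(\mathcal{F}_{\nu})$ and $\phi_n\in L^2(\R^+,x^{2\nu+1}\,dx)$ immediately, since a polynomial times a Gaussian is integrable against $x^{2\nu+1}\,dx$ near $0$ (as $2\nu+1>-1$) and decays faster than any power at infinity. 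In particular $\phi_n \in L^1((R,\infty),x^{\nu+1/2}\,dx)$ even when $-1<\nu<-\frac12$.

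For orthonormality, I would integrate by parts using that $\Delta_{\nu} = x^{-2\nu-1}\frac{d}{dx}x^{2\nu+1}\frac{d}{dx}$ is formally symmetric on $L^2(\R^+,x^{2\nu+1}\,dx)$. For $m<n$,
\[
\langle \phi_n,\phi_m\rangle = \mu_n\mu_m\int_{\R^+} (\Delta_{\nu}^n e^{-x^2})\, p_m(x^2)\, x^{2\nu+1}\,dx .
\]
Moving all $n$ copies of $\Delta_{\nu}$ onto $p_m(x^2)$ annihilates it, since $\Delta_{\nu}$ lowers the degree in $x^2$ by one. The boundary terms vanish because of the Gaussian at infinity. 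At $0$, even functions of $x$ have $x^{2\nu+1}f'(x) = O(x^{2\nu+2})\to 0$, because $\nu>-1$. For $m=n$ the same computation leaves $\Delta_{\nu}^n x^{2n} = 4^n n!\,\Gamma(n+\nu+1)/\Gamma(\nu+1)$ times the leading coefficient. Combining this with $\int e^{-x^2}x^{2\nu+1}dx = \Gamma(\nu+1)/2$ and the constant $\mu_n^2$ should give exactly $1$; this is a routine check of the normalization.

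Completeness follows from \Cref{lem:poly_exp_density}. By the triangular structure, $\operatorname{span}\{\phi_0,\dots,\phi_N\} = \operatorname{span}\{x^{2k}e^{-x^2/2}: k\le N\}$. So any $f$ orthogonal to all $\phi_n$ is orthogonal to every $x^{2k}e^{-x^2/2}$, hence $f=0$.

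The eigenvalue relation $\mathcal{F}_{\nu}\phi_n = (-1)^n\phi_n$ is the main step, and I would prove it by induction using the recurrence of \Cref{lem:phin_recurrence}. The base case $\phi_0 \propto e^{-x^2/2}$ is the Gaussian eigenfunction lemma. The key claim is that $\mathcal{F}_{\nu} R_{\nu} = -R_{\nu}\mathcal{F}_{\nu}$ on polynomial-times-Gaussian functions. \Cref{thm:fb_of_delta} and \Cref{thm:fb_of_x2} give $\mathcal{F}_{\nu}(-\Delta_{\nu} f) = y^2\mathcal{F}_{\nu}f$ and $\mathcal{F}_{\nu}(x^2 f) = -\Delta_{\nu}\mathcal{F}_{\nu}f$, so the second order part $-\Delta_{\nu}-x^2$ is mapped to $y^2+\Delta_{\nu}$, i.e. it anticommutes. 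For the first order part, differentiating under the integral and using $\varphi_{\nu}(xy)$ being a function of $xy$ gives $x\partial_x\varphi_{\nu}(xy) = y\partial_y\varphi_{\nu}(xy)$. Integrating by parts against $x^{2\nu+1}dx$ then yields
\[
\mathcal{F}_{\nu}\Big(2x\tfrac{d}{dx}f\Big) = -2y\tfrac{d}{dy}\mathcal{F}_{\nu}f - 2(2\nu+2)\mathcal{F}_{\nu}f,
\]
so $2x\frac{d}{dx}+2\nu+2$ also anticommutes. Hence $\mathcal{F}_{\nu}\phi_{n+1} = -\phi_{n+1}$ times the sign for $\phi_n$, which gives $(-1)^{n+1}$.

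The obstacle is that \Cref{thm:fb_of_delta} and \Cref{thm:fb_of_x2}, and the boundedness of $\varphi_{\nu}'$, were proved only for $\nu>-\frac12$. For $-1<\nu<-\frac12$, I would redo these integrations by parts directly for $f=q(x^2)e^{-x^2/2}$. The polynomial growth $|\varphi_{\nu}(x)|,|\varphi_{\nu}'(x)|\lesssim 1+x^{-\nu-1/2}$ is harmless against Gaussian decay, so all boundary terms still vanish and dominated convergence still justifies differentiating under the integral.
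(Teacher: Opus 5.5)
Your proof is correct. For membership in $\dom(\mathcal{F}_\nu)$ and $L^2$, orthogonality (moving $\Delta_\nu$ onto the polynomial factor), completeness via \Cref{lem:poly_exp_density}, and the eigenvalue induction via $\mathcal{F}_\nu R_\nu = -R_\nu\mathcal{F}_\nu$, it is the paper's argument. Two of your steps fill in what the paper only asserts can be ``achieved quickly'': splitting $R_\nu$ into $-\Delta_\nu - x^2$ and the Euler part $2x\frac{d}{dx}+2\nu+2$, and handling $-1<\nu<-\frac12$ explicitly, where \Cref{thm:fb_of_delta} and \Cref{thm:fb_of_x2} do not literally apply.

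The genuine difference is the normalization step.
\begin{itemize}
\item \emph{Your route.} You compute $\|\phi_n\|^2=\mu_n^2\int_{\R^+}e^{-x^2}\,\Delta_\nu^n\bigl(p_n(x^2)\bigr)\,x^{2\nu+1}\,dx$ directly from the leading coefficient $4^n$. The constants do come out to $\mu_n^2\,2^{4n-1}n!\,\Gamma(n+\nu+1)=1$.
\item \emph{The paper's route.} It inducts with $R_\nu$ and its formal adjoint $L_\nu=-\Delta_\nu-x^2-2x\frac{d}{dx}-2\nu-2$. It derives the lowering relation $L_\nu\phi_{n+1}=4\sqrt{n+1}\sqrt{n+\nu+1}\,\phi_n$ by expanding $L_\nu\phi_{n+1}$ in the basis.
\end{itemize}
As written, the paper's derivation evaluates $\langle\phi_{n+1},\phi_{k+1}\rangle$ as $\delta_{n+1,k+1}$. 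That uses the normalization of $\phi_{n+1}$ it is meant to prove. To close, it needs an independent input: for instance the identity $L_\nu(qe^{-x^2/2})=-(\Delta_\nu q)e^{-x^2/2}$, together with orthogonality and a leading-coefficient comparison, which is essentially your computation.

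Your route is shorter, self-contained, and does not use completeness at all. What the ladder route buys is the lowering relation itself, which fits the harmonic-oscillator picture of \Cref{sec:8}.
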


\begin{proof}
A simple induction shows that $\phi_n$ is a $2n$-degree polynomial multiplying $e^{-x^2/2}$ and so are in $\dom(\mathcal{F}_{\nu})$ and $L^2(\R^+,x^{2\nu+1}\,dx)$ for all $-1 < \nu$. We will focus on proving that they are an orthonormal basis for $L^2(\R^+, x^{2\nu+1}\,dx)$. Without loss of generality, assume $m < n$. We wish to show that $\langle \phi_m, \phi_n\rangle = 0$. For simplicity, we ignore the normalization factors.
\begin{align*}
\langle \phi_m, \phi_n \rangle & \propto \int_{\R^+} (e^{x^2/2} \Delta_{\nu}^m e^{-x^2})\overline{(e^{x^2/2} \Delta_{\nu}^n e^{-x^2})} x^{2\nu+1}\,dx \\
&= \int_{\R^+} (e^{x^2} \Delta_{\nu}^m e^{-x^2}) (\Delta_{\nu}^n e^{-x^2} x^{2\nu+1}\,dx) \\
&= \int_{\R^+} \Delta_{\nu}^{m+1} (e^{x^2} \Delta_{\nu}^m e^{-x^2}) (\Delta_{\nu}^{n-m-1} e^{-x^2} x^{2\nu+1}\,dx)
\end{align*}

\noindent The integration by parts is justified here as the functions decay to zero rapidly and are continuous and the boundary terms go to $0$ at $0$. As $e^{x^2} \Delta_{\nu}^m e^{-x^2}$ is a polynomial of degree $2m$, $\Delta_{\nu}^{m+1} (e^{x^2} \Delta_{\nu}^m e^{-x^2}) = 0$, proving orthogonality.

Let $f\in L^2(\R^+, x^{2\nu+1}\,dx)$ be orthogonal to all of the $\phi_n$. Since the $\phi_n$ are $2n$-degree polynomials multiplying $e^{-x^2/2}$, taking linear combinations gives that $f$ is orthogonal to $x^{2n} e^{-x^2/2}$ for all $n$. By \Cref{lem:poly_exp_density}, this implies that $f\equiv 0$ and so the $\phi_n$ form a complete basis.

To prove that the $\phi_n$ are orthonormal, we use the completeness in conjunction with the operator $R_{\nu}$. We proceed by induction on $n$. A direct computation shows that $\|\phi_0\| = 1$. Suppose that $\|\phi_n\| = 1$. We wish to show that $\|\phi_{n+1}\| = 1$. Employing integration by parts, we have the following identity
\begin{equation}
\int_{\R^+} R_{\nu} \phi_n(x) \phi_m(x) x^{2\nu+1}\,dx = \int_{\R^+} \phi_n(x)\bigg(-\Delta_{\nu} - x^2 - 2x\frac{d}{dx} - 2\nu-2\bigg)\phi_m(x) x^{2\nu+1}\,dx.
\end{equation}

\noindent Let $L_{\nu} = -\Delta_{\nu} - x^2 - 2x\frac{d}{dx} - 2\nu-2$ so that the previous identity reads
\begin{equation}
\langle R_{\nu} \phi_n, \phi_m\rangle = \langle \phi_n, L_{\nu} \phi_m\rangle
\end{equation}

\noindent for any $n,m$. Taking $m = n+1$ and writing $L_{\nu}\phi_{n+1} = \sum_m c_m \phi_m$ and taking an inner product with $\phi_k$, we have
\begin{equation*}
c_k = \langle L_{\nu} \phi_{n+1}, \phi_k\rangle = \langle \phi_{n+1}, R_{\nu} \phi_k\rangle = \langle \phi_{n+1}, 4 \sqrt{k+1} \sqrt{k+\nu+1} \phi_{k+1}\rangle = 4 \sqrt{k+1} \sqrt{k+\nu+1} \delta_{n+1,k+1},
\end{equation*}

\noindent and so $c_k = 4 \sqrt{n+1} \sqrt{n+\nu+1} \delta_{n,m}$, i.e. $L_{\nu} \phi_{n+1} = 4 \sqrt{n+1} \sqrt{n+\nu+1} \phi_n$. Thus,
\begin{equation*}
4 \sqrt{n+1} \sqrt{n+\nu+1} \langle \phi_{n+1}, \phi_{n+1} \rangle = \langle R_{\nu} \phi_n, \phi_{n+1} \rangle = \langle \phi_n, L\phi_{n+1} \rangle = 4 \sqrt{n+1} \sqrt{n+\nu+1} \langle \phi_n, \phi_n \rangle,
\end{equation*}

\noindent and thus $\|\phi_{n+1}\| = 1$.

To show that the $\phi_n$ are eigenfunctions of the Fourier--Bessel transform with eigenvalue $(-1)^n$, we proceed again by induction. As noted previously, $\phi_0$ is an eigenfunction of $\mathcal{F}_{\nu}$ with eigenvalue $1$. By \Cref{lem:phin_recurrence}, $\phi_n \propto R_{\nu}^n \phi_0$. To prove the result, we have the secondary result that for $f\in \operatorname{span}\{\phi_n\}$,
\begin{equation}
\mathcal{F}_{\nu} R_{\nu}f = -R_{\nu} \mathcal{F}_{\nu}f.
\end{equation}

\noindent This can be achieved quickly by applications of integration by parts as well as differentiation under the integral, both of which are justified because $f$ is even and has $0$ derivative at $0$ and are of rapid decay. A key feature of this argument is that $-\Delta_{\nu}(\varphi_{\nu}(xy)) = y^2 \varphi_{\nu}(xy)$. Thus, up to an overall multiplicative factor depending on $n$ (indicated by the proportionalities below),
\begin{equation}
\mathcal{F}_{\nu} \phi_n \propto \mathcal{F}_{\nu} R_{\nu}^n \phi_0 = (-1)^n R_{\nu}^n \mathcal{F}_{\nu} \phi_0 = (-1)^n R_{\nu}^n \phi_0 \propto (-1)^n \phi_n
\end{equation}

\noindent as desired.
\end{proof}

\section{The \texorpdfstring{$L^2$}{L2} Theory of the Fourier--Bessel Transform} \label{sec:7}

\subsection{The Fourier-Bessel Transform on \texorpdfstring{$L^1(\R^+,x^{2\nu+1}\, dx)\cap L^2(\R^+,x^{2\nu+1}\,dx)$}{L1∩L2}} \label{sec:7_sub:1}

In this section, we assume that $-\frac{1}{2} < \nu$. Similar to the usual Fourier transform, to expedite the arguments for the Fourier--Bessel transform on $L^1(\R^+,x^{2\nu+1}\, dx)\cap L^2(\R^+,x^{2\nu+1}\,dx)$, we restrict our attention to functions with sufficient regularity. Our goal is to make the integration by parts steps later rigorous which informs the choice for the domain. In this section, we shall consider the subspace
$$ \mathcal{S} = \left\{f\in C^{2k}(\R^+)\cap L^1(\R^+,x^{2\nu+1}\, dx)\cap L^2(\R^+,x^{2\nu+1}\,dx) \;\left|\; \begin{aligned} & \Delta_{\nu}f, \ldots, \Delta_{\nu}^k f \in L^1(\R^+,x^{2\nu+1}\,dx), \\
& \frac{d}{dx}f, \ldots, \frac{d}{dx}\Delta_{\nu}^{k-1} f \in L^1(\R^+,x^{2\nu+1}\,dx) \end{aligned}\right.\right\}, $$

\noindent where $k = 2(\nu+2)$ if $\nu$ is an integer and $k = 2\lceil\nu+1\rceil$ if $\nu$ is non-integer. Note $k$ is even.

We could also require that $f',f''\in L^2(\R^+,x^{2\nu+1}\,dx)$, but this is an unnecessary assumption for what follows. Notice that we have a potentially large number of derivatives in the definition of $\S$ to offset the growth from the measure to ensure that the Fourier--Bessel transform will be in $L^2(\R^+,x^{2\nu+1}\,dx)$. Here, we assume that $f$ and its derivatives are all bounded at $0$ which is important for the later analysis. Alternatively, we could assume that $f$ and its derivatives all extend continuously to $0$.

$\mathcal{S}$ is necessarily nonempty as it includes the Gaussian $e^{-x^2/2}$ and its dilations, $\phi_n$ for all $n$, and all compactly supported smooth functions supported away from $0$, each of which can be checked via direct computation. $\mathcal{S}$ is much larger than this of course, but this is sufficient for our purposes. Furthermore, $\mathcal{S}$ is dense in $L^2(\R^+,x^{2\nu+1}\,dx)$ due to its containment of these functions. A simple application of Fubini's theorem shows that the Fourier--Bessel transform satisfies the Fubini property on $\S$ for $-\frac{1}{2} < \nu$ as $\varphi_{\nu}$ is bounded.

From the Riemann--Lebesgue lemma, we know that the Fourier--Bessel transform of a function in $L^1(\R^+,x^{2\nu+1}\,dx)$ is in $C_0(\R^+)$, but we have no control on the decay of the Fourier--Bessel transform of the function \emph{a priori}. Thus we cannot conclude without more work or assumptions that the Fourier--Bessel transform of functions in $L^1(\R^+,x^{2\nu+1}\, dx)\cap L^2(\R^+,x^{2\nu+1}\,dx)$ are necessarily in $L^2(\R^+,y^{2\nu+1}\,dy)$ themselves. The extra derivative assumptions will provide more decay for the Fourier--Bessel transform and therefore guarantee containment in $L^2(\R^+,y^{2\nu+1}\,dy)$ as shown in the next theorem.

\begin{theorem}
If $f\in\mathcal{S}$, then $\mathcal{F}_{\nu} f\in L^2(\R^+,y^{2\nu+1}\,dy)$, $\mathcal{F}_{\nu}$ is norm-preserving, and $\mathcal{F}_{\nu}$ extends to a unitary on $L^2(\R^+,x^{2\nu+1}\,dx)$.
\end{theorem}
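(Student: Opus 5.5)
The proof has two parts. First I show $\mathcal{F}_{\nu} f \in L^2(\R^+,y^{2\nu+1}\,dy)$ by getting enough polynomial decay of $\mathcal{F}_{\nu} f$ at infinity. Then I invoke \Cref{thm:integral_operator} with the basis $\{\phi_n\}$ from \Cref{sec:6} to obtain norm-preservation and the unitary extension.

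For the decay, fix $f\in\mathcal{S}$. I apply \Cref{thm:fb_of_delta} iteratively to $f, \Delta_{\nu} f, \ldots, \Delta_{\nu}^{k-1}f$. The hypotheses for the $j$-th step are exactly what $\mathcal{S}$ provides:
\begin{itemize}
\item $\Delta_{\nu}^{j} f$, $\frac{d}{dx}\Delta_{\nu}^{j} f$ and $\Delta_{\nu}^{j+1}f$ lie in $L^1(\R^+,x^{2\nu+1}\,dx)$;
\item these functions are bounded, or continuous, at $0$.
\end{itemize}
This gives
\[
\mathcal{F}_{\nu}(\Delta_{\nu}^k f)(y) = (-1)^k y^{2k}\mathcal{F}_{\nu} f(y).
\]
By \Cref{lem:RL_Linfinity} applied to $\Delta_{\nu}^k f$, we get $|\mathcal{F}_{\nu} f(y)| \le \varphi_{\nu}(0)\|\Delta_{\nu}^k f\|_1\, y^{-2k}$. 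By the same lemma applied to $f$, $\mathcal{F}_{\nu} f$ is bounded by $\varphi_{\nu}(0)\|f\|_1$, and it is continuous by \Cref{lem:RL_continuity}. I then split $\int_{\R^+}|\mathcal{F}_{\nu} f(y)|^2 y^{2\nu+1}\,dy$ at $y=1$. On $(0,1)$ the integrand is bounded by $\varphi_{\nu}(0)^2\|f\|_1^2 y^{2\nu+1}$, which is integrable since $2\nu+1>-1$. On $(1,\infty)$ it is bounded by a constant times $y^{-4k+2\nu+1}$, which is integrable once $4k > 2\nu+2$. The choice of $k$ in $\mathcal{S}$ is far more than enough for this. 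This is the one place where $\nu > -\frac{1}{2}$ is essential, because it makes $\varphi_{\nu}$ bounded.

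Next I check the hypotheses of \Cref{thm:integral_operator} with $X=\R^+$, $d\mu = x^{2\nu+1}\,dx$ and $T=\mathcal{F}_{\nu}$.
\begin{itemize}
\item $\mathcal{S}$ is $L^2$-dense, as noted above.
\item $\mathcal{F}_{\nu}$ maps $\mathcal{S}$ into $L^2$, by the previous step.
\item The $\phi_n$ lie in $\mathcal{S}$, are real (so $\theta_n=0$), and form an orthonormal basis with $\mathcal{F}_{\nu}\phi_n = (-1)^n\phi_n$, so $|\lambda_n|=1$. All of this comes from the theorem in \Cref{sec:6}.
\item The Fubini property \eqref{eq:fubini} holds on $\mathcal{S}$. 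Since $|\varphi_{\nu}(xy)f(x)g(y)|x^{2\nu+1}y^{2\nu+1} \le \varphi_{\nu}(0)|f(x)||g(y)|x^{2\nu+1}y^{2\nu+1}$ and $f,g\in L^1$, Tonelli and then Fubini let me swap the order of integration, using the symmetry of the kernel in $x$ and $y$.
\end{itemize}
\Cref{thm:integral_operator} then yields $\|\mathcal{F}_{\nu} f\| = \|f\|$ for $f\in\mathcal{S}$ and a unitary extension to $L^2(\R^+,x^{2\nu+1}\,dx)$.

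The main obstacle I expect is the iteration of \Cref{thm:fb_of_delta}. That theorem is stated for $f\in C^2$ with $f$ and its derivatives continuous at $0$. For $\Delta_{\nu}^j f$ I need both its first derivative and the boundary term $x^{2\nu+1}\varphi_{\nu}(xy)\,\frac{d}{dx}\Delta_{\nu}^j f(x)$ to vanish at $0$. I would handle this with the standing assumption of $\mathcal{S}$ that $f$ and all its derivatives up to order $2k$ are bounded at $0$. Boundedness of $\frac{d}{dx}\Delta_{\nu}^j f$ near $0$ together with $2\nu+1>0$ kills the boundary term at $0$. At infinity, the limit argument in the proof of \Cref{thm:fb_of_delta} applies verbatim, using the $L^1$ assumptions on $\frac{d}{dx}\Delta_{\nu}^{j}f$ and on $\Delta_{\nu}^{j}f$. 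A secondary subtlety is the second integration by parts, which needs $\varphi_{\nu}'(xy)\,\Delta_{\nu}^j f(x)\,x^{2\nu+1}$ to vanish at $0$ and at infinity. This follows from boundedness of $\varphi_{\nu}'$ together with the same contradiction argument.
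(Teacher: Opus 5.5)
Your route is the paper's. You iterate \Cref{thm:fb_of_delta}, bound $\mathcal{F}_{\nu}(\Delta_{\nu}^{m}f)$ with \Cref{lem:RL_Linfinity} to get polynomial decay of $\mathcal{F}_{\nu}f$, and then apply \Cref{thm:integral_operator} to the real eigenbasis $\{\phi_n\}$. Iterating $k$ rather than $k/2$ times, handling small $y$ through the $L^\infty$ bound, and checking the Fubini property by Tonelli are fine refinements of what the paper writes.

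\textbf{The gap.} It sits in the step you yourself single out. You infer that $\frac{d}{dx}\Delta_{\nu}^{j}f$ is bounded near $0$ from the standing assumption that the ordinary derivatives $f^{(m)}$ are bounded there. For $j\ge 1$ that inference is false. The term $\frac{2\nu+1}{x}\frac{d}{dx}$ in $\Delta_{\nu}$ produces negative powers of $x$ as soon as $f'(0)\neq 0$. The boundary term $x^{2\nu+1}\varphi_{\nu}(xy)\frac{d}{dx}\Delta_{\nu}^{j}f(x)$ then need not vanish at $0$, and the iterated identity $\mathcal{F}_{\nu}(\Delta_{\nu}^{k}f)=(-1)^{k}y^{2k}\mathcal{F}_{\nu}f$ can fail. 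The step $j=0$ is legitimate and gives $|\mathcal{F}_{\nu}f(y)|\le Cy^{-2}$, but that is enough only when $\nu<1$; the higher iterates need more.

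\textbf{A counterexample.} Take $\nu=\frac12$, so $k=4$, and $f(x)=xe^{-x^2}$.
\begin{itemize}
\item Since $\Delta_{1/2}g=x^{-1}(xg)''$, one gets $\Delta_{1/2}^{j}f=x^{-1}h_j$ with $h_j=\partial_x^{2j}(x^2e^{-x^2})$ even and of Gaussian decay. So all $f^{(m)}$ are bounded at $0$, every integrability condition in $\mathcal{S}$ holds, and $f\in\mathcal{S}$.
\item Write $\mathcal{F}_{1/2}g(y)=\sqrt{2/\pi}\,y^{-1}\int_0^\infty \sin(xy)\,xg(x)\,dx$ and integrate by parts. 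This gives $\mathcal{F}_{1/2}(\Delta_{1/2}g)(y)=\sqrt{2/\pi}\,\lim_{x\to 0^+}xg(x)-y^2\mathcal{F}_{1/2}g(y)$.
\item Since $h_1(0)=2$, applying this twice yields
\[
\mathcal{F}_{1/2}(\Delta_{1/2}^{2}f)(y)=y^{4}\,\mathcal{F}_{1/2}f(y)+2\sqrt{2/\pi},
\]
so $\mathcal{F}_{1/2}f(y)\sim -2\sqrt{2/\pi}\,y^{-4}$ by the Riemann--Lebesgue lemma.
\end{itemize}
Your bound $|\mathcal{F}_{\nu}f(y)|\le Cy^{-2k}=Cy^{-8}$ is therefore false for this element of $\mathcal{S}$. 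The conclusion happens to survive here, but not by your argument.

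\textbf{The repair.} Add to $\mathcal{S}$ the condition your bullet list tacitly uses: $x^{2\nu+1}\frac{d}{dx}\Delta_{\nu}^{j}f(x)\to 0$ as $x\to 0^+$ for $0\le j\le k-1$.
\begin{itemize}
\item Boundedness of $\frac{d}{dx}\Delta_{\nu}^{j}f$ near $0$ suffices, and it holds for restrictions of smooth even functions, including the $\phi_n$.
\item The same condition also kills the second boundary term, which is $O\big(x^{2\nu+2}\Delta_{\nu}^{j}f(x)\big)$ near $0$.
\end{itemize}
The paper's ``modest modification'' passes over exactly this point. Once $\mathcal{S}$ is strengthened this way, your proof and the paper's coincide.
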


\begin{proof}
$f$ satisfies the conditions for \Cref{thm:fb_of_delta}, and a modest modification of the result shows more generally that
\begin{equation}
\int_{\R^+} \varphi_{\nu}(xy) \Delta_{\nu}^{k/2} f(x) x^{2\nu+1}\,dx = (-y^2)^{k/2} \int_{\R^+} \varphi_{\nu}(xy) f(x) x^{2\nu+1}\,dx.
\end{equation}

By the Riemann--Lebesgue lemma, we know that the Fourier--Bessel transform of $\Delta_{\nu}^{k/2} f$ is bounded, and from the final equality, we can conclude that the Fourier--Bessel transform of $f$ decays at least as fast as $y^{-k}$. Thus
$$ \int_{\R^+} |\mathcal{F}_{\nu}f(y)|^2 y^{2\nu+1}\,dy = \int_0^R |\mathcal{F}_{\nu}f(y)|^2 y^{2\nu+1}\,dy + \int_R^{\infty} |\mathcal{F}_{\nu}f(y)|^2 y^{2\nu+1}\,dy $$

\noindent The first integral on the right is finite by virtue of the continuity of $\mathcal{F}_{\nu}f$. The second is finite since $|\mathcal{F}_{\nu}f(y)|^2 \le M y^{-2k}$ and so the integrand is $y^{-2k+2\nu+1}$. Since $k = 2(\nu+2)$ if $\nu$ is an integer or $2\lceil\nu+1\rceil$ if $\nu$ is non-integer, $\displaystyle \int_R^{\infty} y^{-2k+2\nu+1}\,dy$ is finite. Thus $\mathcal{F}_{\nu} f\in L^2(\R^+, y^{2\nu+1}\,dy)$ as desired.

Thus by \Cref{thm:integral_operator}, the Fourier--Bessel transform is norm-preserving on $\S$ and extends to a unitary on $L^2(\R^+,x^{2\nu+1}\,dx)$. Furthermore, since $\mathcal{F}_{\nu}^2 \phi_n = \phi_n$ for all $n$, the extension of $\mathcal{F}_{\nu}^2$ to $L^2(\R^+,x^{2\nu+1}\,dx)$ is the identity.
\end{proof}

\subsection{A Schwartz Space for the Fourier--Bessel Transform} \label{sec:7_sub:2}

In contrast with the Fourier setting, the Fourier--Bessel kernel $\varphi_{\nu}$ is not an eigenfunction of a linear first order differential operator involving only elementary functions. However, it is an eigenfunction of a first order differential-difference equation \cite{Dunkl,roslervoit} which provides an entry point for Dunkl theory which will not be explored here.

$\varphi_{\nu}$ is however an eigenfunction of the second order differential operator $\Delta_{\nu}$ which suggests that the Schwartz space associated to the Fourier--Bessel transform is a bit different than that of the Fourier transform. To ensure that $\Delta_{\nu}$ defines a symmetric operator which can then be extended to a self-adjoint operator, a careful analysis shows that the Schwartz space should be the restriction of the even Schwartz space functions to the half-line, else there are boundary terms at $0^+$ to contend with. This motivates the following definition.

\begin{definition}
The half-line Schwartz space $\S^+(\R^+)$ is given by
\begin{equation}
\S^+(\R^+) = \left\{f|_{\R^+} \;\left|\; f\in \S(\R), f(-x) = f(x)\right.\right\},
\end{equation}

\noindent where $\S(\R)$ denotes the usual Schwartz space \cite[p. 336]{Conway}.
\end{definition}

In either case that $-1 < \nu < -\frac{1}{2}$ or $-\frac{1}{2} < \nu$, the Fourier--Bessel transform is well-defined as an integral transform on $\S^+(\R^+)$. Moreover, $\S^+(\R^+)$ is dense in $L^2(\R^+,x^{2\nu+1}\,dx)$ as it contains the Gaussian and its dilations, $\phi_n$ for all $n$, and the compactly supported smooth functions supported away from $0$. One can endow $\S^+(\R^+)$ with a topology, but we will not do this here as it is a major digression as $\S^+(\R^+)$ is not the primary focus of the paper.

Additionally, the Fourier--Bessel transform satisfies the Fubini property on $\S^+(\R^+)$ which can be shown via direct computation. When $-\frac{1}{2} < \nu$, the argument is nearly identical to that as in Section 7.1 as $\S^+(\R^+) \subseteq L^1(\R^+,x^{2\nu+1}\,dx)$. However, when $-1 < \nu < -\frac{1}{2}$, the exponential decay offsets the growth from the measure and integrals may be interchanged by Fubini's theorem. $\S^+(\R^+)$ plays nicely with $x^2$, $\Delta_{\nu}$, and the Fourier--Bessel transform as the next results show.

\begin{lemma}
If $f\in \S^+(\R^+)$, then $x^2 f,\Delta_{\nu} f\in \S^+(\R^+)$.
\end{lemma}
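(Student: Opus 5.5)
The plan is to work with the even extension. Let $f = F|_{\R^+}$ with $F\in\S(\R)$ even. The claim for $x^2 f$ is immediate: $x^2F(x)$ is in $\S(\R)$, since Schwartz space is closed under multiplication by polynomials, and it is even because $x^2$ and $F$ are. Its restriction to $\R^+$ is $x^2 f$, so $x^2 f\in\S^+(\R^+)$.

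For $\Delta_{\nu} f = f'' + \frac{2\nu+1}{x} f'$, I will show that the function
\begin{equation*}
G(x) = F''(x) + (2\nu+1)\frac{F'(x)}{x},
\end{equation*}
suitably defined at $x=0$, is an even Schwartz function. Then $G|_{\R^+} = \Delta_{\nu} f$, which gives the claim.

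\begin{itemize}
\item $F''$ is even and Schwartz, so only the term $H(x) = F'(x)/x$ needs attention.
\item Since $F$ is even, $F'$ is odd and smooth, so $F'(0)=0$.
\item By Hadamard's lemma, $H(x) = \int_0^1 F''(tx)\,dt$. This formula shows $H$ is smooth on all of $\R$, with $H^{(m)}(x) = \int_0^1 t^m F^{(m+2)}(tx)\,dt$. It also shows $H$ is even, being the quotient of two odd functions.
\end{itemize}

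The main obstacle is checking that $H$ has Schwartz decay, i.e. $\sup_x |x^j H^{(m)}(x)| < \infty$ for all $j,m$. The integral formula alone does not give this, because $F^{(m+2)}(tx)$ is not small when $t$ is near $0$. So I split into $|x|\le 1$ and $|x|\ge 1$.

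\begin{itemize}
\item On $|x|\le 1$, the integral formula bounds $H^{(m)}$ by $\|F^{(m+2)}\|_{\infty}$, so $x^jH^{(m)}$ is bounded there.
\item On $|x|\ge 1$, I differentiate $F'(x)/x$ directly with the Leibniz rule. This gives
\begin{equation*}
H^{(m)}(x) = \sum_{i=0}^m c_{m,i}\, F^{(i+1)}(x)\, x^{-1-(m-i)},
\end{equation*}
and since $|x|^{-1-(m-i)}\le 1$ there, $|x^jH^{(m)}(x)| \le C\sum_i \sup_y |y^jF^{(i+1)}(y)| < \infty$.
\end{itemize}

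Combining the two regions shows $H\in\S(\R)$ and $H$ is even. Hence $G = F'' + (2\nu+1)H$ is an even Schwartz function and $\Delta_{\nu} f = G|_{\R^+}\in\S^+(\R^+)$.
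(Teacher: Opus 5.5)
Your proof is correct and takes the same route as the paper. The $x^2 f$ case is immediate, and for $\Delta_{\nu} f$ you reduce to showing that $F'(x)/x$, whose singularity at $0$ is removable because $F'$ is odd, extends to an even Schwartz function. Using Hadamard's lemma for smoothness at $0$ and the Leibniz-rule bound on $|x|\ge 1$ for decay of every derivative makes rigorous the two steps the paper only sketches: it asserts continuity of the derivatives of $g$ at $0$ by ``a similar argument'' and mentions only the rapid decay of $g$ itself, which on its own is not enough for Schwartz membership.
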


\begin{proof}
Since multiplying an even Schwartz function by $x^2$ yields another even Schwartz function, it follows that if $f\in\S^+(\R^+)$, then $x^2 f\in \S^+(\R^+)$. Likewise, if $f$ is an even Schwartz function, so too is $f''$. As $\Delta_{\nu}f = f'' + \frac{2\nu+1}{x} f'$, we need only to show that $\frac{f'}{x}$ can be associated to an even Schwartz function.

Clearly, $\frac{f'}{x}$ is undefined at $x = 0$, but the discontinuity is removable as $f$ is even. Define $g:\R\to\R$ by
\begin{equation}
g(x) = \begin{cases}\displaystyle  \frac{f'(x)}{x}, & x \neq 0 \\ f''(0), & x = 0 \end{cases}
\end{equation}

\noindent $g$ is continuous on $\R$ and has rapid decay at infinity, inherited from $f$. A similar argument shows that the derivatives of $g$ are all continuous at $0$, and therefore $g$ is smooth on $\R$. Since $g$ is smooth on $\R$, has rapid decay at infinity, and is even on $\R$, it is an even Schwartz function. Thus $\frac{f'}{x}$ is the half-line restriction of an even Schwartz function and so lies in $\S^+(\R^+)$, giving that $\Delta_{\nu} f \in \S^+(\R^+)$ as desired.
\end{proof}

\begin{theorem}
If $f\in \S_{\nu}^+(\R^+)$, then $\mathcal{F}_{\nu} (x^2 f) = -\Delta_{\nu} \mathcal{F}_{\nu} f$ and $\mathcal{F}_{\nu} (\Delta_{\nu} f)(y) = -y^2 \mathcal{F}_{\nu} f(y)$.
\end{theorem}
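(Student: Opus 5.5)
The plan is to redo the arguments of \Cref{thm:fb_of_delta} and \Cref{thm:fb_of_x2}, now for all $\nu > -1$. The rapid decay and even-extension structure of $f \in \S^+(\R^+)$ will replace both the boundedness of $\varphi_{\nu}$ and the $L^1$ hypotheses used there. The basic tool is the polynomial bound $|\varphi_{\nu}(x)| \le M_1 + M_2 x^{-\nu-1/2}$ from \Cref{lem:asymptotics_of_jnu}. In the same way, $\varphi_{\nu}'(x) = -x\,\varphi_{\nu+1}(x)$ has at most polynomial growth, bounded by a multiple of $x(1 + x^{-\nu-3/2})$. Every integrand below is then a polynomially growing function of $x$ (locally uniformly in $y$) times a Schwartz function, times $x^{2\nu+1}$, which is integrable near $0$ because $2\nu+1 > -1$. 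So each integral converges absolutely, and this works whether or not $\nu > -\frac{1}{2}$. By the preceding lemma, $x^2 f$ and $\Delta_{\nu} f$ lie in $\S^+(\R^+)$, so both sides of each identity are defined.

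For $\mathcal{F}_{\nu}(x^2 f) = -\Delta_{\nu}\mathcal{F}_{\nu} f$, I will differentiate under the integral sign twice in $y$. Fix $y$ in a compact interval $[a,b] \subset \R^+$. By the mean value theorem, the difference quotient of $\varphi_{\nu}(xy)$ is $x\,\varphi_{\nu}'(xy^*)$, and this is bounded by $C x\,(1+x)^{N}$ uniformly for $y^* \in [a/2, 2b]$. Since $x(1+x)^N |f(x)| x^{2\nu+1}$ is integrable, dominated convergence gives the first derivative. The same argument with $x^2\varphi_{\nu}''$ gives the second. Then $\Delta_{\nu}$ in $y$ acts on the kernel as $\Delta_{\nu,y}\,\varphi_{\nu}(xy) = -x^2\varphi_{\nu}(xy)$, which yields the first identity.

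For $\mathcal{F}_{\nu}(\Delta_{\nu} f) = -y^2\mathcal{F}_{\nu} f$, I will write $\Delta_{\nu} f\, x^{2\nu+1} = (x^{2\nu+1} f')'$ and integrate by parts twice on $[\varepsilon, L]$, then let $\varepsilon \to 0$ and $L \to \infty$. The result is $\int \Delta_{\nu,x}(\varphi_{\nu}(xy))\, f\, x^{2\nu+1}\,dx = -y^2\mathcal{F}_{\nu} f(y)$, provided the two boundary terms
\[
x^{2\nu+1}\varphi_{\nu}(xy) f'(x) \quad\text{and}\quad x^{2\nu+1}\, y\,\varphi_{\nu}'(xy) f(x)
\]
vanish at both ends. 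At $\infty$ this follows at once, since polynomial growth times a Schwartz function tends to $0$.

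The main obstacle is the endpoint $0$ when $-1 < \nu < -\frac{1}{2}$, where $x^{2\nu+1}$ blows up. Here I will use evenness. Because $f$ is the restriction of an even smooth function, $f'(x) = O(x)$ near $0$, so the first term is $O(x^{2\nu+2}) \to 0$ since $\nu > -1$. Because $\varphi_{\nu}$ is entire and even, $\varphi_{\nu}'(xy) = O(x)$, so the second term is likewise $O(x^{2\nu+2}) \to 0$. This is exactly the point at which the definition of $\S^+(\R^+)$ through even Schwartz functions is essential: for a general smooth $f$ with $f'(0) \ne 0$, the term $x^{2\nu+1} f'(x)$ would not vanish at $0$ when $\nu < -\frac{1}{2}$.
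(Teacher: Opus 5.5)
Your argument is correct, and it is more general than the paper's proof. The paper proves this theorem in one line by citing \Cref{thm:fb_of_delta} and \Cref{thm:fb_of_x2}. Both of those results are stated and proved only for $-\frac{1}{2} < \nu$. They rely on the boundedness of $\varphi_\nu$ and $\varphi_\nu'$ and on $L^1$ hypotheses. In \Cref{thm:fb_of_delta} the boundary term at $0$ vanishes only because $x^{2\nu+1} \to 0$ there. So the citation covers only $\nu > -\frac{1}{2}$, even though the Schwartz-space section explicitly includes $-1 < \nu < -\frac{1}{2}$. The paper itself remarks after \Cref{thm:fb_of_delta} that this range is unclear.

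You redo both computations directly, with two replacements. First, the Schwartz decay of $f$ against the polynomial growth of $\varphi_\nu$, $\varphi_\nu'$, $\varphi_\nu''$ replaces boundedness: it justifies dominated convergence and kills the boundary terms at infinity. Second, evenness gives $f'(x) = O(x)$ and $\varphi_\nu'(t) = -t\varphi_{\nu+1}(t) = O(t)$. Hence both boundary terms at $0$ are $O(x^{2\nu+2})$, and they vanish for every $\nu > -1$.

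The paper's route is shorter but valid only where its cited theorems apply. Yours is self-contained and works uniformly for $\nu > -1$. It also shows exactly why $\S^+(\R^+)$ must consist of restrictions of even functions. For general smooth $f$ with $f'(0) \neq 0$, the term $x^{2\nu+1}f'(x)$ would blow up at $0$ when $\nu < -\frac{1}{2}$.

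One small cleanup: since $\nu + 1 > 0$, \Cref{cor:phin_bounded} gives directly $|\varphi_\nu'(t)| \le \varphi_{\nu+1}(0)\, t$. This is cleaner than your bound $x(1+x^{-\nu-3/2})$, which is not $O(x)$ near $0$. It is also what actually justifies your domination by $Cx(1+x)^N$ and your later use of $\varphi_\nu'(xy) = O(x)$.
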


\begin{proof}
This is a direct consequence of \Cref{thm:fb_of_delta} and \Cref{thm:fb_of_x2}.
\end{proof}

\begin{corollary}
If $f\in\S^+(\R^+)$, then $\mathcal{F}_{\nu}f \in\S^+(\R^+)$ and $\mathcal{F}_{\nu}^2 = I$ on $\mathcal{S}^+_{\nu}(\R^+)$.
\end{corollary}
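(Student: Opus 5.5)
The plan has two parts: first show that $\mathcal{F}_{\nu}$ maps $\S^+(\R^+)$ into itself, then show that $\mathcal{F}_{\nu}^2$ is the identity there. The characterization I will use is that $g\in\S^+(\R^+)$ exactly when $g$ is the restriction of an even smooth function on $\R$ all of whose seminorms $\sup|x^{2a}(\Delta_{\nu}^b g)(x)|$ are finite. Since $g$ is even and smooth, controlling the powers of $x^2$ together with powers of $\Delta_{\nu}$ controls every $x^a g^{(b)}$: near $0$ by smoothness, and at infinity because $\Delta_{\nu} = \frac{d^2}{dx^2} + \frac{2\nu+1}{x}\frac{d}{dx}$ differs from $\frac{d^2}{dx^2}$ only by terms with decaying coefficients.

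\textbf{Smoothness and evenness.} For $f\in\S^+(\R^+)$, I write $\mathcal{F}_{\nu}f(y)=\int_{\R^+}\varphi_{\nu}(xy)f(x)x^{2\nu+1}\,dx$. The function $\varphi_{\nu}$ is entire and even; this follows from the series $\varphi_{\nu}(x)=\sum_n \frac{(-1)^n x^{2n}}{2^{2n+\nu} n!\,\Gamma(n+\nu+1)}$. Hence $y\mapsto\varphi_{\nu}(xy)$ is even in $y$, and so $\mathcal{F}_{\nu}f$ extends naturally to an even function on $\R$. Each $y$-derivative of $\varphi_{\nu}(xy)$ is bounded by $C_m x^m(1+|xy|)^{N}$ for some fixed $N$; by \Cref{lem:asymptotics_of_jnu}, $N=0$ works when $\nu>-\frac12$ and a polynomial bound works when $-1<\nu<-\frac12$. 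The rapid decay of $f$ therefore dominates all $y$-derivatives locally uniformly, and differentiation under the integral sign gives $\mathcal{F}_{\nu}f\in C^\infty(\R)$.

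\textbf{Decay.} For the seminorms I use the preceding theorem together with the lemma that $\S^+(\R^+)$ is stable under $x^2$ and $\Delta_{\nu}$. For $a,b\ge 0$,
\begin{equation*}
y^{2a}\Delta_{\nu}^b\mathcal{F}_{\nu}f(y) = (-1)^{a+b}\mathcal{F}_{\nu}\big(\Delta_{\nu}^a(x^{2b}f)\big)(y),
\end{equation*}
and $h=\Delta_{\nu}^a(x^{2b}f)\in\S^+(\R^+)$. It remains to see that $\mathcal{F}_{\nu}h$ is bounded for every $h\in\S^+(\R^+)$.
\begin{itemize}
\item For $\nu>-\frac12$ this is immediate from \Cref{lem:RL_Linfinity}, since $\S^+(\R^+)\subseteq L^1(\R^+,x^{2\nu+1}\,dx)$.
\item For $-1<\nu<-\frac12$ this is the main obstacle, because $\varphi_{\nu}$ grows like $x^{-\nu-1/2}$. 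Applying the bound to $h$ itself gives only $|\mathcal{F}_{\nu}h(y)|\le C(1+y^{-\nu-1/2})$. Applying the same estimate to $\Delta_{\nu}h$ yields $|y^2\mathcal{F}_{\nu}h(y)|\le C'(1+y^{-\nu-1/2})$. Since $-\nu-\frac12<\frac12<2$, this gives decay, and the single extra power of $\Delta_{\nu}$ absorbs the polynomial growth. I apply this uniformly for every seminorm.
\end{itemize}
I also need to check that the two identities of the preceding theorem hold for $-1<\nu<-\frac12$ on $\S^+$. The integration-by-parts boundary terms vanish at $0$ because $f'(x)=O(x)$ and $x^{2\nu+2}\to0$, and at $\infty$ because of rapid decay against polynomial growth.

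\textbf{Involution.} With $\mathcal{F}_{\nu}(\S^+)\subseteq\S^+$ established, \Cref{thm:integral_operator} applies with $\mathcal{S}=\S^+(\R^+)$. This uses that $\S^+$ is dense, that the $\phi_n\in\S^+$ satisfy $\mathcal{F}_{\nu}\phi_n=(-1)^n\phi_n$, and that the Fubini property holds on $\S^+$ as noted above. The accompanying corollary then gives $\mathcal{F}_{\nu}^2=I$ on $\S^+(\R^+)$. Concretely, for $f\in\S^+$ and every $n$, two applications of the Fubini property give
\begin{equation*}
\langle\mathcal{F}_{\nu}^2f,\phi_n\rangle=\langle f,\phi_n\rangle.
\end{equation*}
By completeness of $\{\phi_n\}$, $\mathcal{F}_{\nu}^2f=f$ in $L^2$. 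Since both sides are continuous, they agree pointwise.
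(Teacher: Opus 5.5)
Your proposal is correct and follows the paper's route: rapid decay comes from boundedness of $\mathcal{F}_{\nu}$ applied to $\Delta_{\nu}^a(x^{2b}f)\in\S^+(\R^+)$ via the two intertwining identities, smoothness and evenness are inherited from the even entire kernel $\varphi_{\nu}$, and $\mathcal{F}_{\nu}^2=I$ follows from the eigenbasis $\{\phi_n\}$ with eigenvalues $(-1)^n$ together with the Fubini property. Your version is in fact more careful than the paper's sketch, which appeals to the Riemann--Lebesgue lemma (false for $-1<\nu<-\frac{1}{2}$), to \Cref{thm:fb_of_delta} and \Cref{thm:fb_of_x2} (stated only for $-\frac{1}{2}<\nu$), and asserts exponential decay; your extra factor of $\Delta_{\nu}$ absorbing the $y^{-\nu-1/2}$ growth, your direct check of the boundary terms, and your superpolynomial decay bound are what the full range $-1<\nu$ actually requires.
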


\begin{proof}
This proof is nearly a direct synthesis of the previous results. From the Riemann--Lebesgue lemma in conjunction with the fact that $\Delta_{\nu}^k f\in \S^+(\R^+)$ for any $f\in \S^+(\R^+)$, the Fourier--Bessel transform of any half-line Schwartz function has exponential decay. Similarly, its Fourier--Bessel transform will be smooth as $x^{2n}f\in \S^+(\R^+)$. Thus, for any $f\in \S^+(\R^+)$, $\mathcal{F}_{\nu}f$ will be the restriction of a Schwartz function and evenness is obtained via the evenness of $\varphi_{\nu}$ so that $\mathcal{F}_{\nu}(\S^+(\R^+)) \subseteq \S^+(\R^+)$. Moreover, since $\mathcal{F}_{\nu}^2 \phi_n = \phi_n$, $\mathcal{F}_{\nu}^2 = I$ on $\mathcal{S}_{\nu}^+(\R^+)$ and $\mathcal{F}_{\nu}$ is an automorphism of $\S^+(\R^+)$.
\end{proof}

In effect, the half-line Schwartz space is invariant under $x^2$, $\Delta_{\nu}$, and the Fourier--Bessel transform, and $-\Delta_{\nu}$ has symbol $y^2$ under the Fourier--Bessel transform, all in near perfect parallel with the Fourier transform and the whole-line Schwartz space. Not only does the Fourier--Bessel transform map $\S_{\nu}^+(\R^+)$ to itself, it also satisfies the Fubini property on $\S_{\nu}^+(\R^+)$ which can be checked quickly. Thus by \Cref{thm:integral_operator}, the Fourier--Bessel transform is norm-preserving on $\S^+(\R^+)$ and extends to a unitary on $L^2(\R^+,x^{2\nu+1}\,dx)$.

\section{An Uncertainty Principle for the Fourier--Bessel Transform} \label{sec:8}

The Fourier uncertainty principle is fundamental in the mathematical sciences as it places a limitation on how well-localized a function and its Fourier transform may be. In terms of signal processing, it imposes a limit on how well one can identify when a band of frequencies occurred in a signal: the stricter the frequency band, the less knowledge there is of when the frequencies occurred in the signal. Mathematically, it may be presented as
\begin{equation}
\frac{1}{4} \le \bigg(\int_{\R} (x-a)^2 |f(x)|^2\,dx\bigg) \bigg(\int_{\R} (y-b)^2 |\mathcal{F}f(y)|^2\,dy\bigg),
\end{equation}

\noindent where $a,b\in\R$, for an $L^2$-normalized function $f$. When $a = \langle x\rangle$, where $\langle x\rangle = \langle xf,f\rangle$, the integral
$$ \int_{\R} (x-a)^2 |f(x)|^2\,dx $$

\noindent represents the variance of $x$ with respect to the probability density $|f(x)|^2$ and measures the spread in $f$ away from its mean, similarly for $b$. The integrals represent the ``uncertainty'' in measurements: the smaller the uncertainty, the more the content of $f$ is concentrated around its mean so that we have a good understanding of the values of $f$ and vice versa.

A simple variational argument shows that the uncertainty product for a fixed $f$ is minimized when $a = \langle x\rangle$ and likewise for $b$. The uncertainty principle places a constraint on how closely $f$ and $\mathcal{F}f$ can be concentrated around their respective means. There are numerous proofs for the Fourier uncertainty principle, a short collection of which can be found in the survey by Folland \cite{Folland}. Key to many of these proofs are the group theoretic properties of the Fourier kernel and Lebesgue measure. The weighted measure in the Fourier--Bessel setting already supplies an obstruction to generalizing these techniques, further exacerbated by the fact that the $\varphi_{\nu}$ do not have group properties but rather hypergroup properties \cite{Levitan_translation}.

Nevertheless, one could inquire about the nature of
\begin{equation}
J[f] = \bigg(\int_{\R^+} (x-a)^2 |f(x)|^2 x^{2\nu+1}\,dx\bigg) \bigg(\int_{\R^+} (y-b)^2 |\mathcal{F}_{\nu}f(y)|^2 y^{2\nu+1}\,dy\bigg)
\end{equation}

\noindent for the Fourier--Bessel transform, where $a,b\in\R^+$. While the usual Fourier techniques do not apply, a variational approach could be applied to determine the local extrema of the above functional. Doing the variational derivative of this functional subject to the constraint that $\|f\| = 1$ gives the following integro-differential equation
\begin{equation}
-\alpha \Delta_{\nu} f - 2b\alpha \mathcal{F}_{\nu}^{-1}(y\mathcal{F}_{\nu}f) + \beta (x-a)^2 f = \lambda f.
\end{equation}

Note the appearance of the term $\mathcal{F}_{\nu}^{-1}(y\mathcal{F}_{\nu}f)$. This is a major departure from the Fourier setting and occurs because the Fourier--Bessel transform does not play nicely with $x$ like the Fourier transform but rather with $x^2$. This cannot be simplified further, and as such, most authors consider only the case when $a = 0 = b$ so that this term will vanish and to enforce symmetry in the uncertainty product.

Starting from $a = 0 = b$ gives the functional
\begin{equation}
J[f] = \bigg(\int_{\R^+} x^2 |f(x)|^2 x^{2\nu+1}\,dx\bigg) \bigg(\int_{\R^+} y^2 |\mathcal{F}_{\nu}f(y)|^2 y^{2\nu+1}\,dy\bigg).
\end{equation}

\noindent If $f\in L^2(\R^+,x^{2\nu+1}\,dx)$ such that $x^2 f\in L^2(\R^+,x^{2\nu+1}\,dx)$ and $y^2 \mathcal{F}_{\nu}f \in L^2(\R^+,y^{2\nu+1}\,dy)$, the above functional is well-defined and we may explore the nature of the uncertainty product.

\begin{theorem}
Let $-\frac{1}{2} < \nu$ and $f\in\S$, where $\S$ is as defined in Section 7.1, such that $\|f\| = 1$, $x^2 f\in L^2(\R^+,x^{2\nu+1}\,dx)$, and $\Delta_{\nu} f\in\S$, then $J$ has a lower bound of $(\nu+1)^2$ which is obtained by dilations of the Gaussian.
\end{theorem}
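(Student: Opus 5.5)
The plan is to reduce $J[f]$ to the Heisenberg-type commutator argument with $\Delta_{\nu}$ in place of $\frac{d}{dx}$. By unitarity (Section 7.1) and the identity $\mathcal{F}_{\nu}(\Delta_{\nu}f)(y) = -y^2\mathcal{F}_{\nu}f(y)$ from \Cref{thm:fb_of_delta}, the second factor is a quadratic form in $f$. Concretely, I would write
\begin{equation*}
\int_{\R^+} y^2|\mathcal{F}_{\nu}f(y)|^2 y^{2\nu+1}\,dy = \langle y^2\mathcal{F}_{\nu}f,\mathcal{F}_{\nu}f\rangle = \langle -\mathcal{F}_{\nu}\Delta_{\nu}f,\mathcal{F}_{\nu}f\rangle = \langle -\Delta_{\nu}f,f\rangle = \int_{\R^+}|f'(x)|^2x^{2\nu+1}\,dx.
\end{equation*}
The last equality is one integration by parts against the measure $x^{2\nu+1}\,dx$, using $\Delta_{\nu} = x^{-2\nu-1}\frac{d}{dx}x^{2\nu+1}\frac{d}{dx}$. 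The boundary terms vanish: at $0$ because $f,f'$ are bounded there and $2\nu+1>0$, and at $\infty$ by the same integrability argument used in the proof of \Cref{thm:fb_of_delta}. The hypotheses $f\in\S$ and $\Delta_{\nu}f\in\S$ are exactly what make $\mathcal{F}_{\nu}\Delta_{\nu}f$ lie in $L^2$, so that the polarized norm-preservation from \Cref{thm:integral_operator} applies.

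Next comes the commutator step. Write $1=\|f\|^2=\int|f|^2x^{2\nu+1}\,dx$ and integrate by parts in the form
\begin{equation*}
(2\nu+2)\int_{\R^+}|f|^2x^{2\nu+1}\,dx=\int_{\R^+}|f|^2\frac{d}{dx}x^{2\nu+2}\,dx=-2\,\mathrm{Re}\int_{\R^+}x f'(x)\overline{f(x)}\,x^{2\nu+1}\,dx .
\end{equation*}
The boundary term at $0$ vanishes since $x^{2\nu+2}\to0$. At $\infty$ it vanishes because $x^{2\nu+2}|f|^2$ is integrable-derivative controlled: it is bounded by combining $x^2f\in L^2$ with $f'\in L^2$, and I would argue as in \Cref{thm:fb_of_delta} that the limit exists and must be zero. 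Cauchy--Schwarz then gives
\[
(\nu+1)\le\Big(\int x^2|f|^2x^{2\nu+1}\Big)^{1/2}\Big(\int|f'|^2x^{2\nu+1}\Big)^{1/2},
\]
and squaring together with the first step yields $J[f]\ge(\nu+1)^2$.

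For sharpness I would take the equality case of Cauchy--Schwarz, $f'=-cxf$ with $c>0$. This forces $f=Ce^{-cx^2/2}$, which is a normalized dilation $\D_{\nu,\alpha}$ of the Gaussian. Such a function lies in $\S$ with $\Delta_{\nu}f\in\S$, and the real-part inequality is an equality because $f$ is real. As a sanity check I would compute $J$ directly for $e^{-x^2/2}$, using $\mathcal{F}_{\nu}$ fixing it and the dilation lemma, $\mathcal{F}_{\nu}\D_{\nu,\alpha}=\D_{\nu,\alpha}^{-1}\mathcal{F}_{\nu}$. Each factor equals $\nu+1$ via $\Gamma$-function moments, and dilation scales the two factors by $\alpha^{-2}$ and $\alpha^{2}$, which confirms the whole dilation family attains the bound.

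The main obstacle I expect is the rigor of the two integrations by parts, in particular the boundary term at infinity and the identification $\langle y^2\mathcal{F}_{\nu}f,\mathcal{F}_{\nu}f\rangle=\langle-\Delta_{\nu}f,f\rangle$. The latter needs $\mathcal{F}_{\nu}(\Delta_{\nu}f)$ to be in $L^2$ and the Parseval identity in polarized form on $\S$. I would handle it by applying \Cref{thm:integral_operator} to $\Delta_{\nu}f\in\S$ and polarizing norm-preservation.
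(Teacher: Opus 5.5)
Your proposal is correct and follows essentially the same route as the paper. The steps match: integrate $\|f\|^2=1$ by parts against $\frac{d}{dx}x^{2\nu+2}$, apply Cauchy--Schwarz to $xf$ and $f'$, and integrate by parts again to reach $\langle -\Delta_{\nu}f,f\rangle$. You then identify this with $\int y^2|\mathcal{F}_{\nu}f|^2y^{2\nu+1}\,dy$ via (polarized) unitarity and $\mathcal{F}_{\nu}\Delta_{\nu}f=-y^2\mathcal{F}_{\nu}f$, and read off the equality case from $f'=-cxf$. Your handling of the boundary terms and of equality is, if anything, slightly more careful than the paper's: you require $c$ real and positive, where the paper only asks $\operatorname{Re}\lambda>0$.
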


\begin{proof}
Since $\|f\| = 1$, we have
\begin{equation}
1 = \int_{\R^+} |f(x)|^2 x^{2\nu+1}\,dx = \frac{1}{2(\nu+1)} x^{2\nu+2} f(x)\overline{f(x)} \bigg|_0^{\infty} - \frac{1}{\nu+1} \int_{\R^+} x \operatorname{Re}(f'(x) \overline{f(x)}) x^{2\nu+1}\,dx.
\end{equation}

\noindent The boundary terms vanish at infinity by virtue of $f$ being bounded at $0$ and $x^{2\nu+2} \to 0$ at $0$ and the $L^2$ condition on $f$. Employing Cauchy--Schwarz, splitting the weight equally onto both parts, and another integration by parts yields
\begin{equation}
1 \le \frac{1}{(\nu+1)^2} \bigg(\int_{\R^+} x^2 |f(x)|^2 x^{2\nu+1}\,dx\bigg) \bigg(\int_{\R^+} f(x) \overline{(-\Delta_{\nu} f(x))} x^{2\nu+1}\,dx\bigg).
\end{equation}

\noindent Again, the boundary terms in the integration by parts step go to $0$ because of the $L^2$ condition on $f$.

By virtue of the unitarity of the Fourier--Bessel transform and $\mathcal{F}_{\nu} \Delta_{\nu}f(y) = -y^2 \mathcal{F}_{\nu}f(y)$, this gives exactly that
\begin{equation}
(\nu+1)^2 \le \bigg(\int_{\R^+} x^2 |f(x)|^2 x^{2\nu+1}\,dx\bigg) \bigg(\int_{\R^+} y^2 |\mathcal{F}_{\nu} f(y)|^2 y^{2\nu+1}\,dy\bigg).
\end{equation}

\noindent From the Cauchy--Schwarz step, equality is obtained when $f'(x) = -\lambda x f(x)$ and so $f$ is a Gaussian. Note that the real part of $\lambda$ must be positive for $f\in L^2(\R^+,x^{2\nu+1}\,dx)$.
\end{proof}

When $\nu = -\frac{1}{2}$, this becomes exactly the usual Fourier uncertainty principle. The above analysis can be repeated for $-1 < \nu < -\frac{1}{2}$ when $f\in \S^+(\R^+)$ and is in fact simplified in this case. There are some other generalizations of the Fourier uncertainty principle to the Fourier--Bessel transform, particularly the work of \cite{GHOBBER2011501} regarding the supports of a function and its Fourier--Bessel transform.

Exploring the local extrema the functional $J$ after making use of \Cref{thm:fb_of_delta}, we have the following eigenvalue problem for the local extrema
\begin{equation}
-\alpha \Delta_{\nu} f + \beta x^2 f = \lambda f.
\end{equation}

\noindent This is a quantum harmonic oscillator-like Hamiltonian. The eigenfunctions are exactly $\phi_n$ defined in \Cref{def:phi_n} and correspond to local minimizers for additive uncertainty uncertainty. A global minimizer for the additive minimizer can be established via completeness of the eigenfunctions. Such an approach was taken by De Bruijn \cite{debruijn} in the Fourier setting. The dilation trick employed by Folland \cite{Folland} allows for this to be ported to the multiplicative uncertainty to obtain a minimum multiplicative uncertainty.

As noted above, the weighted measure and lack of group theoretic properties in the Fourier--Bessel setting are a major obstruction to a more faithful analogue of the uncertainty principle for the Fourier--Bessel transform. A further complication is that the Fourier--Bessel transform is best understood as a transform acting on the restrictions of even functions to the half-line. An analogue of the uncertainty principle for the Fourier--Bessel transform on the whole line was explored by \cite{roslervoit} that requires a Fourier sine transform analogue paired with the Fourier--Bessel transform. In this setting, an explicit lower bound is established even when $a$ and $b$ are nonzero. This approach is based on Dunkl theory which is based on a first order differential-difference operator on $\R$. This first order theory streamlines the arguments and supplies the elegance of the approach.

\bibliographystyle{plain}
\bibliography{references}

\begin{thebibliography}{10}

\bibitem{Akhiezer}
N.~I. Akhiezer.
\newblock {\em Lectures on integral transforms}.
\newblock American Mathematical Society, 1988.

\bibitem{debruijn}
N.G. \{Bruijn, de\}.
\newblock Uncertainty principles in fourier analysis.
\newblock In {\em Inequalities : proceedings of a symposium held at Wright-Patterson air force base, Ohio, August 19-27, 1965}, pages 57--71. Academic Press Inc., 1967.

\bibitem{Conway}
J.~B. Conway.
\newblock {\em A Course in Functional Analysis}.
\newblock Springer New York, NY, 2007.

\bibitem{Delsarte}
J.~Delsarte.
\newblock Sur une extension de la formule de {Taylor}.
\newblock {\em Journal de Math\'ematiques Pures et Appliqu\'ees}, 9e s{\'e}rie, 17(1-4):213--231, 1938.

\bibitem{NIST:DLMF}
{\it NIST Digital Library of Mathematical Functions}.
\newblock \url{https://dlmf.nist.gov/}, Release 1.2.4 of 2025-03-15.
\newblock F.~W.~J. Olver, A.~B. {Olde Daalhuis}, D.~W. Lozier, B.~I. Schneider, R.~F. Boisvert, C.~W. Clark, B.~R. Miller, B.~V. Saunders, H.~S. Cohl, and M.~A. McClain, eds.

\bibitem{Dunkl}
C.~F. Dunkl.
\newblock Differential-difference operators associated to reflection groups.
\newblock {\em Transactions of the American Mathematical Society}, 311(1):167--183, 1989.

\bibitem{Folland}
Gerald~B. {Folland} and Alladi {Sitaram}.
\newblock {The uncertainty principle: A mathematical survey}.
\newblock {\em Journal of Fourier Analysis and Applications}, 3(3):207--238, May 1997.

\bibitem{Garcia_2014}
Stephan~Ramon Garcia, Emil Prodan, and Mihai Putinar.
\newblock Mathematical and physical aspects of complex symmetric operators.
\newblock {\em Journal of Physics A: Mathematical and Theoretical}, 47(35):353001, aug 2014.

\bibitem{GHOBBER2011501}
Saifallah Ghobber and Philippe Jaming.
\newblock Strong annihilating pairs for the fourier–bessel transform.
\newblock {\em Journal of Mathematical Analysis and Applications}, 377(2):501--515, 2011.

\bibitem{gradshteyn2007}
I.~S. Gradshteyn and I.~M. Ryzhik.
\newblock {\em Table of integrals, series, and products}.
\newblock Elsevier/Academic Press, Amsterdam, seventh edition, 2007.
\newblock Translated from the Russian, Translation edited and with a preface by Alan Jeffrey and Daniel Zwillinger, With one CD-ROM (Windows, Macintosh and UNIX).

\bibitem{Hamburger}
H.~Hamburger.
\newblock Beitr\"{a}ge zur konvergenztheorie der stieltjesschen kettenbr\"{u}che.
\newblock {\em Mathematische Zeitschrift}, pages 186--222, 1919.

\bibitem{Hankel1875}
Hankel.
\newblock Die fourier'schen reihen und integrale für cylinderfonctionen.
\newblock {\em Mathematische Annalen}, 8:471--494, 1875.

\bibitem{Levitan_bessel}
B.~M. Levitan.
\newblock Expansion in fourier series and integrals with bessel functions.
\newblock {\em Uspekhi Mat. Nauk}, 6:102--143, 1951.

\bibitem{Levitan_translation}
B.~M. Levitan.
\newblock {\em Generalized Translation Operators and Some of Their Applications}.
\newblock Israel Program for Scientific Translations, 1964.

\bibitem{Rosler_hypergroups}
Margit Rösler.
\newblock Bessel-type signed hypergroups on r.
\newblock In {\em Probability measures on groups and related structures XI (Oberwolfach 1994)}, page 292–304. World Scientific, 1995.

\bibitem{roslervoit}
Margit Rösler and Michael Voit.
\newblock An uncertainty principle for hankel transforms.
\newblock {\em Proceedings of the American Mathematical Society}, 127(1):183--194, 1999.

\bibitem{Watson}
George~Neville Watson.
\newblock Bessel functions. (scientific books: A treatise on the theory of bessel functions).
\newblock {\em Science}, 1923.

\end{thebibliography}

\end{document}